  \newcommand{\itemCref}[2]{\hyperref[{#2}]{\mbox{\Cref*{#1}\labelcref*{#2}}}}
  \newcommand{\PartCref}[1]{\hyperref[{#1}]{Part \labelcref*{#1}}}
  \newcommand{\partCref}[1]{\hyperref[{#1}]{part \labelcref*{#1}}}
  \newcommand{\caseCref}[1]{\hyperref[{#1}]{case \labelcref*{#1}}}
  \newcommand{\CaseCref}[1]{\hyperref[{#1}]{Case \labelcref*{#1}}}
  \newcommand{\secCref}[1]{\hyperref[{#1}]{§\labelcref*{#1}}}
  \newcommand{\SecCref}[1]{\hyperref[{#1}]{(§\labelcref*{#1})}}
  \NewDocumentCommand{\myCref}{mo}{\IfNoValueTF{#2}{\Cref{#1}}{\itemCref{#1}{#2}}}
  \numberwithin{equation}{section}
  \newtheorem{Proposition}[equation]{Proposition}
  \newtheorem{Lemma}[equation]{Lemma}
  \newtheorem{Theorem}[equation]{Theorem}
  \newtheorem{Corollary}[equation]{Corollary}
  \theoremstyle{definition} % makes all of the theorem environments which follow appear in \rm
  \newtheorem{Definition}[equation]{Definition}
  \newtheorem{Remark}[equation]{Remark}
  \newtheorem{Example}[equation]{Example}
  \newtheorem{Examples}[equation]{Examples}
  \Crefname{Proposition}{Proposition}{Propositions}
  \crefname{Proposition}{proposition}{propositions}
  \Crefname{Examples}{Examples}{Examples}
  \crefname{examples}{examples}{examples}
  \Crefname{Example}{Example}{Examples}
  \crefname{example}{example}{examples}
  \newcommand{\settext}[1]{\text{\normalfont\ #1}\ } %Used for writing text inside a math environment correct spacing.
  \newcommand{\bmath}[1]{\text{\boldmath$ #1$}} %Bold math font
  \let\ss\scriptstyle
  \let\sss\scriptscriptstyle
  \newcommand{\mtiny}[1]{\mbox{\tiny$ #1$}}
  \newcommand{\mtinier}[1]{\mbox{\tinier$ #1$}}
  \newcommand{\mminiscule}[1]{ \mbox{\miniscule$ #1$}}
  \newcommand{\miniscule}{\@setfontsize\miniscule{4}{5}}% \tiny: 5/6
  \newcommand{\miniscule}{\@setfontsize\miniscule{4}{5}}% \tiny: 6/7
  \newcommand{\miniscule}{\@setfontsize\miniscule{5}{6}}% \tiny: 6/7
  \newcommand{\tinier}{\@setfontsize\tinier{4}{5}}% \tiny: 5/6
  \newcommand{\tinier}{\@setfontsize\tinier{5}{6}}% \tiny: 6/7
  \newcommand{\tinier}{\@setfontsize\tinier{5}{6}}% \tiny: 6/7
  \NewDocumentCommand\smallsup{mom}{\IfNoValueTF{#2}{{{{#1}^{{\sss{#3}}}}}\vphantom{#1}}{{{{#1}_{#2}^{{\sss{#3}}}}}\vphantom{#1}}}
  \NewDocumentCommand\tinysup{mom}{\IfNoValueTF{#2}{{{{#1}^{{\mtiny{#3}}}}}\vphantom{#1}}{{{{#1}_{#2}^{{\mtiny{#3}}}}}\vphantom{#1}}}
  \NewDocumentCommand\tiniersup{mom}{\IfNoValueTF{#2}{{{{#1}^{{\mtinier{#3}}}}}\vphantom{#1}}{{{{#1}_{#2}^{{\mtinier{#3}}}}}\vphantom{#1}}}
  \NewDocumentCommand\tiniestsup{mom}{\IfNoValueTF{#2}{{{{#1}^{{\mminiscule{#3}}}}}\vphantom{#1}}{{{{#1}_{#2}^{{\mminiscule{#3}}}}}\vphantom{#1}}}
  \NewDocumentCommand\lowsmallsup{mom}{\IfNoValueTF{#2}{\def\arraystretch{0.2}#1\begin{array}{@{}l@{}}{\sss #3}\\ {\phantom{\sss #1}}\end{array}\vphantom{#1}}{\def\arraystretch{0.1}#1\begin{array}{@{}l@{}}{\sss #3}\\ {\vphantom{#1}^{\ss #2}}\end{array}\vphantom{#1}}}
  \NewDocumentCommand\lowsmallestsup{mom}{\IfNoValueTF{#2}{\def\arraystretch{0.1}#1\begin{array}{@{}l@{}}{\mminiscule{#3}}\\ {\phantom{\mminiscule{#1}}}\end{array}\vphantom{#1}}{\def\arraystretch{0.1}#1\begin{array}{@{}l@{}}{\mminiscule{#3}}\\ {\vphantom{#1}^{\ss #2}}\end{array}\vphantom{#1}}}
  \NewDocumentCommand\lowsup{mom}{\IfNoValueTF{#2}{\def\arraystretch{0.1}#1\begin{array}{@{}l@{}}{ \ss{#3}}\\ {\phantom{\ss{#1}}}\end{array}\vphantom{#1}}{\def\arraystretch{0.1}#1\begin{array}{@{}l@{}}{\ss{#3}}\\ {\vphantom{#1}^{\ss #2}}\end{array}\vphantom{#1}}}
  \NewDocumentCommand\smallsub{mmo}{\IfNoValueTF{#3}{{{{#1}_{{\sss{#2}}}}}\vphantom{#1}}{{{{#1}^{#3}_{{\sss{#2}}}}}\vphantom{#1}}}
  \NewDocumentCommand\tinysub{mmo}{\IfNoValueTF{#3}{{{{#1}_{{\mtiny{#2}}}}}\vphantom{#1}}{{{{#1}^{#3}_{{\mtiny{#2}}}}}\vphantom{#1}}}
  \NewDocumentCommand\tiniersub{mmo}{\IfNoValueTF{#3}{{{{#1}_{{\mtinier{#2}}}}}\vphantom{#1}}{{{{#1}^{#3}_{{\mtinier{#2}}}}}\vphantom{#1}}}
  \NewDocumentCommand\tiniestsub{mom}{\IfNoValueTF{#3}{{{{#1}_{{\mminiscule{#2}}}}}\vphantom{#1}}{{{{#1}^{#3}^{{\mminiscule{#2}}}}}\vphantom{#1}}}
  \NewDocumentCommand\lowsmallsub{mmo}{\IfNoValueTF{#3}{\def\arraystretch{0.3}#1\begin{array}{@{}l@{}}{\phantom{\sss #1}}\\ {\sss #2}\end{array}\vphantom{#1}}{\def\arraystretch{0.1}#1\begin{array}{@{}l@{}}{{\ss #3}}\\ {\sss #2}\end{array}\vphantom{#1}}}
  \NewDocumentCommand\lowsmallestsub{mmo}{\IfNoValueTF{#3}{\def\arraystretch{0.1}#1\hspace{-0.2ex}\begin{array}{@{}l@{}}{\phantom{\mminiscule #1}}\\ {\mminiscule #2}\end{array}\vphantom{#1}\hspace{-0.4ex}}{\def\arraystretch{0.1}#1\begin{array}{@{}l@{}}{{\ss #3}}\\ {\mminiscule #2}\end{array}\vphantom{#1}}}
  \newcommand{\smallsubsup}[3]{{\def\arraystretch{0.5}#1\begin{array}{@{}l@{}}{\sss #3}\\ {\sss #2}\end{array}}}
  \newcommand{\modules}{\textrm{-mod}}
  \renewcommand{\mod}[1]{\ (\operatorname{mod} #1)}
  \newcommand{\coker}{{\rm{coker}}\,}
  \newcommand{\Sp}{{\mathrm {Sp}}}
  \newcommand{\Hom}{\operatorname{Hom}}
  \newcommand{\End}{\operatorname{End}}
  \newcommand{\GL}{\operatorname{GL}}
  \newcommand{\Char}{{\mathrm {char}\ }}
  \newcommand{\cf}{\text{\rm{cf}}}
  \newcommand{\diag}{{\mathrm {diag}}}
  \newcommand{\ind}{\operatorname{ind}}
  \newcommand{\sgn}{\mathtt{sgn}}
  \newcommand{\im}{{\mathrm{im}\,}}
  \newcommand{\Tab}{{\rm{Tab}}}
  \newcommand{\Rel}{\operatorname{Rel}}
  \newcommand{\SRel}{\operatorname{SRel}}
  \newcommand{\rowexop}[5]{\smallsubsup{#1}{(#3,#4)(#2,#5)}{(#2,#4)(#3,#5)}}
  \newcommand{\colexop}[5]{\smallsubsup{#1}{(#4,#3)(#5,#2)}{(#4,#2)(#5,#3)}}
  \DeclareFontFamily{U}{mathx}{\hyphenchar\font45}
  \DeclareFontShape{U}{mathx}{m}{n}{
  <5> <6> <7> <8> <9> <10>
  <10.95> <12> <14.4> <17.28> <20.74> <24.88>
  mathx10
  }{}
  \DeclareSymbolFont{mathx}{U}{mathx}{m}{n}
  \let\bigoplus\relax
  \DeclareMathSymbol{\bigoplus}{1}{mathx}{"C0}
  \newcommand{\flip}[1]{\reflectbox{$ #1$}}
  \newcommand{\eqqcolon}{\mathrel{\flip{\coloneqq}}}
  \newcommand{\sq}[1]{{\left[#1\right]}}
  \def\k{\Bbbk}
  \newcommand{\xr}[1]{\xrightarrow{#1}}
  \newcommand{\xlr}[1]{\xleftrightarrow{#1}}
  \newcommandx{\xrtail}[1][1 = \hspace{0.1 ex}]{\overset{#1}{\rightarrowtail}}
  \newcommandx{\totail}[1][1 = \hspace{0.1 ex}]{\overset{#1}{\rightarrowtail}}
  \newcommandx{\xrtwo}[1][1 = \hspace{0.1 ex}]{\overset{#1}{\twoheadrightarrow}}
  \newcommandx{\into}[1][1 = \hspace{0.1 ex}]{\xhookrightarrow{#1}}
  \renewcommandx{\to}[1][1 = \hspace{0.1ex}]{\xr{#1}}
  \newcommandx{\leftright}[1][1 = \hpsace{0.1ex}]{\xlr{#1}}
  \newcommandx{\from}[1][1 = \hpsace{0.1ex}]{\xr{#1}}
  \newcommandx{\eqto}[1][1 = \hspace{2 ex}]{\xlongequal{#1}}
  \let\oldmapsto\mapsto
  \RenewDocumentCommand\mapsto{o}{\IfNoValueTF{#1}{\oldmapsto}{\xmapsto{#1}}}
  \newcommand{\xrightarrowdbl}[2][]{%
  \xrightarrow[#1]{#2}\mathrel{\mkern-14mu}\rightarrow
  }
  \newcommandx{\onto}[2][1 = \hspace{0.1 ex},2 = \hspace{0.1 ex},]{\xrightarrowdbl[#2]{#1}}
  \newcommand*{\da@rightarrow}{\mathchar"0\hexnumber@\symAMSa 4B }
  \newcommand*{\da@leftarrow}{\mathchar"0\hexnumber@\symAMSa 4C }
  \newcommand*{\xdashrightarrow}[2][]{%
  \mathrel{%
    \mathpalette{\da@xarrow{#1}{#2}{}\da@rightarrow{\,}{}}{}%
  }%
  }
  \newcommand{\xdashleftarrow}[2][]{%
  \mathrel{%
    \mathpalette{\da@xarrow{#1}{#2}\da@leftarrow{}{}{\,}}{}%
  }%
  }
  \newcommand*{\da@xarrow}[7]{%
  % #1: below
  % #2: above
  % #3: arrow left
  % #4: arrow right
  % #5: space left
  % #6: space right
  % #7: math style
  \sbox0{$\ifx#7\scriptstyle\scriptscriptstyle\else\scriptstyle\fi#5#1#6\m@th$}%
  \sbox2{$\ifx#7\scriptstyle\scriptscriptstyle\else\scriptstyle\fi#5#2#6\m@th$}%
  \sbox4{$ #7\dabar@\m@th$}%
  \dimen@=\wd0 %
  \ifdim\wd2 >\dimen@
    \dimen@=\wd2 %
  \fi
  \count@=2 %
  \def\da@bars{\dabar@\dabar@}%
  \@whiledim\count@\wd4<\dimen@\do{%
    \advance\count@\@ne
    \expandafter\def\expandafter\da@bars\expandafter{%
      \da@bars
      \dabar@
    }%
  }%
  \mathrel{#3}%
  \mathrel{%
    \mathop{\da@bars}\limits
    \ifx\\#1\\%
    \else
      _{\copy0}%
    \fi
    \ifx\\#2\\%
    \else
      ^{\copy2}%
    \fi
  }%
  \mathrel{#4}%
  }
  \newcommandx{\dashto}[1][1 = \hspace{0.1 ex}]{\xdashrightarrow{#1}}
  \renewcommandx{\mapsto}[1][1 = \hspace{0.1ex}]{\xmapsto{#1}}
  \RenewDocumentCommand{\brace}{mo}{\IfNoValueTF{#2}{\left\{ #1 \right\}}{\left\{ #1 \; \middle| \; #2 \right\}}}
\begin{document}

%%%%%%%%%%%%%%%%%%%%%%%%%%%%%%%%%%%IntroBegin%%%%%%%%%%%%%%%%%%%%%%%%%%%%%%%%%%%%%
  \date{\today}

  \thanks{The authors gratefully acknowledge the support of The Royal Society through the research grant RGF\textbackslash R1\textbackslash181015a.}

  \title[Endomorphisms of Specht modules]{{\bf On the endomorphism algebra of Specht modules in even characteristic}}

  \author{\sc Haralampos Geranios}
  \address{Department of Mathematics\\ University of York\\ York YO10 5DD, U.K.}
  \email{haralampos.geranios@york.ac.uk}

  \author{\sc Adam Higgins}
  \address{Department of Mathematics\\ University of York\\ York YO10 5DD, U.K.}
  \email{adam.higgins@york.ac.uk}

  \begin{abstract}
    Over fields of characteristic $2$, Specht modules may decompose and there is no upper bound for the dimension of their endomorphism algebra. A classification of the (in)decomposable Specht modules and a closed formula for the dimension of their endomorphism algebra remain two important open problems in the area. In this paper, we introduce a novel description of the endomorphism algebra of the Specht modules and provide infinite families of Specht modules with one-dimensional endomorphism algebra.
  \end{abstract}

  \maketitle
%%%%%%%%%%%%%%%%%%%%%%%%%%%%%%%%%%%IntroEnd%%%%%%%%%%%%%%%%%%%%%%%%%%%%%%%%%%%%%

\section{Introduction}\label{sec.intro}

Let $\k$ be an algebraically closed field of characteristic $p\geq 0$ and $r$ a positive integer. We write $\mathfrak{S}_{r}$ for the symmetric group on $r$ letters and $\k\mathfrak{S}_{r}$ for its group algebra over $\k$. For each partition $\lambda$ of $r$ we have the Specht module $\Sp(\lambda)$ and for each composition $\alpha$ of $r$ we have the permutation module $M(\alpha)$. Recall that $\Sp(\lambda)$ may be viewed as a submodule of $M(\lambda)$. One fundamental result by James states that unless the characteristic of $\k$ is $2$ and $\lambda$ is \mbox{$2$-singular}, the space of homomorphisms $\Hom_{\k\mathfrak{S}_{r}}(\Sp(\lambda),M(\lambda))$ is one-dimensional\mbox{\cite[Corollary 13.17]{J}}. It follows that the endomorphism algebra of $\Sp(\lambda)$ is one-dimensional and so in particular that $\Sp(\lambda)$ is indecomposable.

In contrast, if the characteristic of $\k$ is $2$ and $\lambda$ is a \mbox{$2$-singular} partition, that is $\lambda$ has a repeated term, $\Sp(\lambda)$ may certainly decompose. The first example of a decomposable Specht module was discovered by James in the late 70s, thereby setting in motion the investigation of the (in)decomposability of Specht modules; a problem that has attracted a lot of attention over the years. In a recent paper \cite{DG1}, Donkin and the first author considered partitions of the form $\lambda=(a,m-1,m-2,\ldots,2,1^{b})$ and obtained precise decompositions of $\Sp(\lambda)$ in the case where $a-m$ is even and $b$ is odd. An interesting feature arising in these decompositions is that there is no upper bound for the number of indecomposable summands of $\Sp(\lambda)$ and so in turn for the dimension of its endomorphism algebra \cite[Example 6.3]{DG1}. Almost half a century after James' first example, a classification of the (in)decomposable Specht modules remains to be found and there is no known formula describing the dimension of their endomorphism algebra. In this paper, we provide a new characterisation of $\End_{\k\mathfrak{S}_{r}}(\Sp(\lambda))$ as a subset of the homomorphism space $\Hom_{\k\mathfrak{S}_{r}}(M(\lambda'),M(\lambda))$, where $\lambda'$ is the transpose partition of $\lambda$. Our description allows one to realise an endomorphism of $\Sp(\lambda)$ as an element of the set $\Hom_{\k\mathfrak{S}_{r}}(M(\lambda'),M(\lambda))$ that satisfies certain concrete relations. In this way, we are able to show that for $\lambda=(a,m-1,\ldots,2,1^{b})$ with $a-m\equiv b\mod{2}$, the endomorphism algebra of $\Sp(\lambda)$ is one-dimensional.

We do so by taking inspiration from the category of polynomial representations of the general linear groups. More precisely, for a partition $\lambda$, we compare two different constructions of the induced module $\nabla(\lambda)$ for $\GL_{n}(\k)$: the first introduced by Akin, Buchsbaum, and Weyman \cite[Theorem II.2.11]{ABW} and the second by James\mbox{\cite[Theorem 26.3(ii)]{J}}. By applying the Schur functor \cite[§6.3]{G}, we then obtain two characterisations of the Specht module $\Sp(\lambda)$: first as a quotient of $M(\lambda')$ and then as a submodule of $M(\lambda)$. This leads to a concrete description of the endomorphism algebra of $\Sp(\lambda)$, which we shall then investigate in detail for partitions of the form $\lambda=(a,m-1,\dots,2,1^{b})$.

The paper is arranged in the following way. \Cref{sec.prel} provides the necessary background on polynomial representations of $\GL_{n}(\k)$ and \mbox{$\k\mathfrak{S}_{r}$-modules}. In \Cref{sec.end.alg} we explore the connection between these two categories via the Schur functor $f$ and its right-inverse $g$. As a by-product of our considerations, we provide a new short proof of the fact that $g\Sp(\lambda)\cong\nabla(\lambda)$ for $p\neq 2$. Then, we focus on homomorphisms and in \Cref{end.alg.Sp} we obtain the desired description of $\End_{\k\mathfrak{S}_{r}}(\Sp(\lambda))$ in characteristic $2$. In \Cref{sec.trunc.trick} we utilise more tools from the representation theory of $\GL_{n}(\k)$ to obtain a reduction technique that will be instrumental to our investigation of the case $\lambda=(a,m-1,\ldots,2,1^{b})$ in \Cref{sec.end.alg.main}.

\section{Preliminaries}\label{sec.prel}

We write $\mathbb{N}$ for the set of non-negative integers.

\subsection{Combinatorics}\label{subsec.comb}

Let $\ell$ be a positive integer and $\alpha=(\alpha_{1},\ldots,\alpha_{\ell})$ be an $\ell$-tuple of non-negative integers. We let $\deg(\alpha)\coloneqq\alpha_{1}+\cdots+\alpha_{\ell}$ and call it the {\emph{degree}} of $\alpha$. We define the \emph{length} of $\alpha$, denoted $\ell(\alpha)$, to be the maximal positive integer $l$ with $1\leq l\leq\ell$ such that $\alpha_{l}\neq 0$ if $\alpha$ is non-zero, and we set $\ell(\alpha)\coloneqq 0$ for $\alpha=(0^{\ell})$. Now, fix positive integers $n$ and $r$. We write $\Lambda(n)$ for the set of $n$-tuples of non-negative integers, and $\Lambda^{+}(n)$ for the set of partitions with at most $n$ parts. We write $\Lambda(n,r)$ for the subset of $\Lambda(n)$ consisting of those elements of degree $r$, and $\Lambda^{+}(n,r)$ for the partitions of $r$ with at most $n$ parts. Given a partition $\lambda\in\Lambda^{+}(n)$, we write $\lambda'$ for its transpose partition. For $\alpha\in\Lambda(n)$ and $1\leq i<j\leq\ell(\alpha)$ with $\alpha_{j}\neq 0$, and for $0<k\leq\alpha_{j}$, we shall denote by $\smallsup{\alpha}{(i,j,k)}=(\smallsup{\alpha}[1]{(i,j,k)},\smallsup{\alpha}[2]{(i,j,k)},\ldots)$ the element of $\Lambda(n)$ with terms $\smallsup{\alpha}[l]{(i,j,k)}\coloneqq\alpha_{l}+k(\delta_{i,l}-\delta_{j,l})$.

\subsection{Representations of general linear groups}\label{subsec.pol.rep}

We consider the general linear group $G\coloneqq\GL_{n}(\k)$ and its coordinate algebra $\k[G]=\k[c_{11},\ldots,c_{nn},\det^{-1}]$, where $\det$ is the determinant function. We write $A_{\k}(n)\coloneqq\k[c_{11},\ldots,c_{nn}]$ for the polynomial subalgebra of $\k[G]$ generated by the functions $c_{ij}$ with $1\leq i,j\leq n$. The algebra $A_{\k}(n)$ has an \mbox{$\mathbb{N}$-grading} of the form $A_{\k}(n)=\bigoplus_{r\in\mathbb{N}}A_{\k}(n,r)$ where $A_{\k}(n,r)$ consists of the homogeneous degree $r$ polynomials in the $c_{ij}$. Given a rational \mbox{$G$-module} $V$, we shall denote by $\cf(V)$ the \emph{coefficient space} of $V$, that is the subspace of $\k[G]$ generated by the \emph{coefficient functions} \mbox{$f_{vv'}:G\to\k$} satisfying \mbox{$g\cdot v'=\sum_{v\in\mathcal{V}}f_{vv'}(g)v$} for $g\in G$, $v,v'\in\mathcal{V}$, where $\mathcal{V}$ is some \mbox{$\k$-basis} of $V$. We say that $V$ is a {\emph{polynomial representation}} of $G$ if $\cf(V)\subseteq A_{\k}(n)$ and a \emph{polynomial representation of $G$ of degree $r$} if $\cf(V)\subseteq A_{\k}(n,r)$. We write $M_{\k}(n)$ for the category of polynomial representations of $G$ and $M_{\k}(n,r)$ for its subcategory of representations of degree $r$. Recall that the category $M_{\k}(n,r)$ is naturally equivalent to the category of $S_{\k}(n,r)$-modules, where $S_{\k}(n,r)\coloneqq A_{\k}(n,r)^{*}$ is the corresponding \mbox{\emph{Schur algebra}}\mbox{\cite[§2.3, §2.4]{G}}. For $V\in M_{\k}(n)$ we write $V^{\circ}$ for its contravariant dual, in the sense of \cite[§2.7]{G}.

We fix $T$ to be the maximal torus of $G$ consisting of the diagonal matrices in $G$. An element $\alpha\in\Lambda(n)$ may be identified with the multiplicative character of $T$ that takes an element $t=\diag(t_{1},\ldots,t_{n})\in T$ to $\alpha(t)\coloneqq t_{1}^{\alpha_{1}}\cdots t_{n}^{\alpha_{n}}\in\k$. We denote by $\k_{\alpha}$ the one-dimensional rational \mbox{$T$-module} on which $t\in T$ acts by multiplication by $\alpha(t)$. Then, given $V\in M_{\k}(n)$, $\alpha\in\Lambda(n)$, we write $V^{\alpha}\coloneqq\{v\in V\mid t\cdot v=\alpha(t)v\settext{for all}t\in T\}$ for the \mbox{$\alpha$-{\emph{weight space}}} of $V$. We write $E\coloneqq\k^{\oplus n}$ for the natural \mbox{$G$-module} and $S^{r}E$ (resp. $\Lambda^{r}E$, $D^{r}E$) for the corresponding $r$th symmetric power (resp. exterior power, divided power) of $E$. For $\ell\geq 1$ and an $\ell$-tuple $\alpha=(\alpha_{1},\ldots,\alpha_{\ell})$ of non-negative integers, we define the polynomial \mbox{$G$-modules}: \mbox{$S^{\alpha}E\coloneqq S^{\alpha_{1}}E\otimes\cdots\otimes S^{\alpha_{\ell}}E$}, \mbox{$\Lambda^{\alpha}E\coloneqq\Lambda^{\alpha_{1}}E\otimes\cdots\otimes\Lambda^{\alpha_{\ell}}E$}, and \mbox{$D^{\alpha}E\coloneqq D^{\alpha_{1}}E\otimes\cdots\otimes D^{\alpha_{\ell}}E$}. If $\deg(\alpha)=r$, then each of these modules lies in $M_{\k}(n,r)$. For $V\in M_{\k}(n)$, there is a \mbox{$\k$-linear} isomorphism $\Hom_{G}(V,S^{\alpha}E)\cong V^{\alpha}$\mbox{\cite[§2.1(8)]{D2}}. For $\alpha\in\Lambda(n)$, the \mbox{$T$-action} on $\k_{\alpha}$ extends uniquely to a module action of the subgroup $B\subseteq G$ of lower-triangular matrices. For $\lambda\in\Lambda^{+}(n)$, we write $\nabla(\lambda)\coloneqq\ind_{B}^{G}\k_{\lambda}$ for the \emph{induced \mbox{$G$-module}} corresponding to $\lambda$\mbox{\cite[§II.2]{Jan}}. Recall that there is a \mbox{$G$-isomorphism} $\nabla(\lambda)^{\circ}\cong\Delta(\lambda)$, where $\Delta(\lambda)$ is the \mbox{\emph{Weyl module}} corresponding to $\lambda$\mbox{\cite[§II.2.13(1)]{Jan}}.

Here, we shall review a construction of the induced module by Akin, Buchsbaum, and Weyman. In \cite[§II.1]{ABW}, the authors associate to a partition $\lambda$ with $\lambda_{1}\leq n$, a $G$-module denoted $L_{\lambda}(E)$, which they call the \emph{Schur functor of $E$}. For simplicity, we shall refer to the module $L_{\lambda}(E)$ as the \emph{Schur module associated to $\lambda$}. Further, in \cite[§II.2]{ABW} the authors provide a description of $L_{\lambda}(E)$ by generators and relations. More precisely, in \cite[Theorem II.2.16]{ABW}, the authors identify $L_{\lambda}(E)$ with the cokernel of a $G$-homomorphism between a pair of (direct sums of) tensor products of exterior powers of $E$. By \cite[§2.7(5)]{D3}, we have that $L_{\lambda}(E)$ is isomorphic to an induced module, namely $L_{\lambda}(E)\cong\nabla(\lambda')$ for $\lambda\in\Lambda^{+}(n)$ (note that $Y(\lambda)$ is used in place of $\nabla(\lambda)$ in \cite{D3}). Their construction is as follows. Recall that the exterior algebra $\Lambda(E)$ of $E$ enjoys a Hopf algebra structure \cite[§I.2]{ABW}. We write $\Delta$ and $\mu$ for the comultiplication and multiplication of $\Lambda(E)$ respectively. Let $\lambda$ be a partition with $\ell\coloneqq\ell(\lambda)$. For $1\leq i<\ell$, $1<j\leq\ell$, $t\geq 1$, and $1\leq s\leq\lambda_{j}$, we consider the \mbox{$G$-homomorphisms} $\smallsup{\Delta}[\lambda]{(i,t)}:\Lambda^{\lambda_{i}+t}E\to\Lambda^{\lambda_{i}}E\otimes\Lambda^{t}E$ and $\smallsup{\mu}[\lambda]{(j,s)}:\Lambda^{s}E\otimes\Lambda^{\lambda_{j}-s}E\to\Lambda^{\lambda_{j}}E$, coming from $\Delta$ and $\mu$ respectively. Further, for $1\leq i<j\leq\ell$, $1\leq s\leq\lambda_{j}$ we construct the \mbox{$G$-homomorphism} \mbox{$\smallsup{\phi}[\lambda]{(i,j,s)}:\Lambda^{\smallsup{\lambda}{(i,j,s)}}E\to\Lambda^{\lambda}E$} as the composition:
  \begin{equation}
    \begin{aligned}\label{General.Buch.I}
      &\Lambda^{\smallsup{\lambda}{(i,j,s)}}E\to[{1\otimes\cdots\otimes\smallsup{\Delta}[\lambda]{(i,s)}\otimes\cdots\otimes 1}]\Lambda^{\lambda_{1}}E\otimes\cdots\otimes\Lambda^{\lambda_{i}}E\otimes\Lambda^{s}E\otimes\cdots\otimes\Lambda^{\lambda_{j}-s}E\otimes\cdots\otimes\Lambda^{\lambda_{\ell}}E \\
      &\to[{\sigma}]\Lambda^{\lambda_{1}}E\otimes\cdots\otimes\Lambda^{\lambda_{i}}E\otimes\cdots\otimes\Lambda^{s}E\otimes\Lambda^{\lambda_{j}-s}E\otimes\cdots\otimes\Lambda^{\lambda_{\ell}}E\to[{1\otimes\cdots\otimes\smallsup{\mu}[\lambda]{(j,s)}\otimes\cdots\otimes 1}]\Lambda^{\lambda}E,
    \end{aligned}
  \end{equation}
where $\sigma$ denotes the isomorphism that permutes the corresponding tensor factors, and each $1$ refers to the identity map on the corresponding tensor factor. Now, set:
  \begin{align}
    \smallsup{\phi}[\lambda]{(i,i+1)}\coloneqq\sum_{s=1}^{\lambda_{i+1}}\smallsup{\phi}[\lambda]{(i,i+1,s)}&:\sum_{s=1}^{\lambda_{i+1}}\Lambda^{\smallsup{\lambda}{(i,i+1,s)}}E\to\Lambda^{\lambda}E,\label{Bu.mapI} \\
    \phi_{\lambda}\coloneqq\sum_{i=1}^{\ell-1}\smallsup{\phi}[\lambda]{(i,i+1)}&:\sum_{i=1}^{\ell-1}\sum_{s=1}^{\lambda_{i+1}}\Lambda^{\smallsup{\lambda}{(i,i+1,s)}}E\to\Lambda^{\lambda}E.\label{Bu.mapII}
  \end{align}
For $\lambda\in\Lambda^{+}(n)$, we have that $\coker\phi_{\lambda'}\cong L_{\lambda'}(E)$ \cite[Theorem II.2.16]{ABW}, and hence $\coker\phi_{\lambda'}\cong\nabla(\lambda)$ \cite[§2.7(5)]{D3}. We shall refer to this description as the \emph{ABW-construction} of $\nabla(\lambda)$.

Now, we review an alternative description of $\nabla(\lambda)$ due to James \cite[§26]{J}. Although James refers to this module as the \lq\lq Weyl module", it is not to be confused with the usual Weyl module $\Delta(\lambda)$ that we discussed above \cite[Theorem (4.8f)]{G}. James' construction is as follows. Recall that the symmetric algebra $S(E)$ of $E$ also has a Hopf algebra structure \cite[§I.2]{ABW}. As a slight abuse of notation, we shall once again use the symbols $\Delta$ and $\mu$ for the corresponding comultiplication and multiplication of $S(E)$ respectively. Let $\lambda$ be a partition with $\ell\coloneqq\ell(\lambda)$. For \mbox{$1\leq i<\ell$}, \mbox{$1<j\leq\ell$}, \mbox{$1\leq t\leq\lambda_{j}$}, and \mbox{$s\geq 1$}, we consider the $G$-homomorphisms $\smallsup{\Delta}[\lambda]{(j,t)}:S^{\lambda_{j}}E\to S^{t}E\otimes S^{\lambda_{j}-t}E$ and $\smallsup{\mu}[\lambda]{(i,s)}:S^{\lambda_{i}}E\otimes S^{s}E\to S^{\lambda_{i}+s}E$ coming from $\Delta$ and $\mu$ respectively. Further, for $1\leq i<j\leq\ell$, $1\leq t\leq\lambda_{j}$, we construct the \mbox{$G$-homomorphism} \mbox{$\smallsup{\psi}[\lambda]{(i,j,t)}:S^{\lambda}E\to S^{\smallsup{\lambda}{(i,j,t)}}E$} as the composition:
  \begin{equation}
    \begin{aligned}\label{General.Jam.I}
        &S^{\lambda}E\to[{1\otimes\cdots\otimes\smallsup{\Delta}[\lambda]{(j,t)}\otimes\cdots\otimes 1}]S^{\lambda_{1}}E\otimes\cdots\otimes S^{\lambda_{i}}E\otimes\cdots\otimes S^{t}E\otimes S^{\lambda_{j}-t}E\otimes\cdots\otimes S^{\lambda_{\ell}}E \to[{\bar{\sigma}}] \\
        &S^{\lambda_{1}}E\otimes\cdots\otimes S^{\lambda_{i}}E\otimes S^{t}E\otimes\cdots\otimes S^{\lambda_{j}-t}E\otimes\cdots\otimes S^{\lambda_{\ell}}E \to[{1\otimes\cdots\otimes\smallsup{\mu}[\lambda]{(i,t)}\otimes\cdots\otimes 1}]S^{\smallsup{\lambda}{(i,j,t)}}E,
    \end{aligned}
  \end{equation}
where $\bar{\sigma}$ denotes the isomorphism that permutes the corresponding tensor factors, and each $1$ refers to the identity map on the corresponding tensor factor. Now, set:
  \begin{align}
    \smallsup{\psi}[\lambda]{(i,i+1)}\coloneqq\sum_{t=1}^{\lambda_{i+1}}\smallsup{\psi}[\lambda]{(i,i+1,t)}:S^{\lambda}E\to&\sum_{t=1}^{\lambda_{i+1}}S^{\smallsup{\lambda}{(i,i+1,t)}}E,\label{Ja.map.I} \\
    \psi_{\lambda}\coloneqq\sum_{i=1}^{\ell-1}\smallsup{\psi}[\lambda]{(i,i+1)}:S^{\lambda}E\to&\sum_{i=1}^{\ell-1}\sum_{t=1}^{\lambda_{i+1}}S^{\smallsup{\lambda}{(i,i+1,t)}}E.\label{Ja.mapII}
  \end{align}
For $\lambda\in\Lambda^{+}(n)$, we have that $\nabla(\lambda)\cong\ker\psi_{\lambda}$ \cite[Theorem 26.5]{J}. We shall refer to this description as the \emph{James-construction} of $\nabla(\lambda)$.

It is important to point out that the James-construction of $\nabla(\lambda)$ may be derived from Akin, Buchsbaum, and Weyman's construction of the Weyl module $\Delta(\lambda)$ via contravariant duality \cite[§II.3]{ABW}. Similarly to \labelcref{General.Buch.I}, \labelcref{Bu.mapI}, and \labelcref{Bu.mapII}, one may define a \mbox{$G$-homomorphism} $\smallsup{\theta}[\lambda]{(i,j,t)}:D^{\smallsup{\lambda}{(i,j,t)}}E\to D^{\lambda}E$ for $1\leq i<j\leq\ell$, $1\leq t\leq\lambda_{j}$, and then construct the \mbox{$G$-homomorphism}:
  \begin{equation}\label{Buc.We.map}
    \theta_{\lambda}\coloneqq\sum_{i=1}^{\ell-1}\sum_{t=1}^{\lambda_{i+1}}\smallsup{\theta}[\lambda]{(i,i+1,t)}:\sum_{i=1}^{\ell-1}\sum_{t=1}^{\lambda_{i+1}}D^{\smallsup{\lambda}{(i,i+1,t)}}E\to D^{\lambda}E.
  \end{equation}
For $\lambda\in\Lambda^{+}(n)$, we have that $\Delta(\lambda)\cong\coker\theta_{\lambda}$ \cite[Theorem II.3.16]{ABW}. Now, recall that $\Delta(\lambda)^{\circ}\cong\nabla(\lambda)$ and that $(D^{\alpha}E)^{\circ}\cong S^{\alpha}E$ for $\alpha\in\Lambda(n)$. By taking contravariant duals, it follows that $\nabla(\lambda)\cong\ker {\theta_{\lambda}}^{\circ}$ and it is easy to check that we have the identifications ${\theta_{\lambda}}^{\circ}=\psi_{\lambda}$ and ${\smallsup{\theta}[\lambda]{(i,j,t){\ss\circ}}}=\smallsup{\psi}[\lambda]{(i,j,t)}$ for $1\leq i<j\leq\ell$, $1\leq t\leq\lambda_{j}$.

\subsection{Connections with the symmetric groups}\label{subsec.Schur}

Recall that for a partition $\lambda$ of $r$, we have the Specht module $\Sp(\lambda)$ for $\k\mathfrak{S}_{r}$. For $\lambda=(1^{r})$, we have that $\Sp(1^{r})$ is the sign representation $\sgn_{r}$ of $\k\mathfrak{S}_{r}$. We fix $n\geq r$, and we consider the {\emph{Schur functor}} \mbox{$f:M_{\k}(n,r)\rightarrow\k\mathfrak{S}_{r}\modules$}, where $fV\coloneqq V^{(1^{r})}$ for $V\in M_{\k}(n,r)$ \cite[§6.3]{G}. For $\lambda\in\Lambda^{+}(n,r)$ we have the isomorphism \mbox{$f\nabla(\lambda)\cong\Sp(\lambda)$}\mbox{\cite[(6.3c)]{G}}, and for $\alpha\in\Lambda(n,r)$ we have the \mbox{$\k\mathfrak{S}_{r}$-isomorphisms} $fS^{\alpha}E\cong M(\alpha)$ and \mbox{$f\Lambda^{\alpha}E\cong M(\alpha)\otimes\sgn_{r}\eqqcolon M_{s}(\alpha)$}, where $M_{s}(\alpha)$ denotes the {\emph{signed permutation module}} corresponding to $\alpha$ \cite[Lemma 3.4]{D1}. We set $\ell\coloneqq\ell(\lambda)$. By applying the Schur functor to the maps $\phi_{\lambda}$ and $\psi_{\lambda}$ from \labelcref{Bu.mapII} and \labelcref{Ja.mapII} respectively, we obtain the \mbox{$\k\mathfrak{S}_{r}$-homomorphisms}:
  \begin{align}
    \bar{\phi}_{\lambda}&\coloneqq f(\phi_{\lambda}):\bigoplus_{i=1}^{\ell-1}\bigoplus_{s=1}^{\lambda_{i+1}}M_{s}(\smallsup{\lambda}{(i,i+1,s)})\to M_{s}(\lambda),\label{Bu.mapIII}\\
    \bar{\psi}_{\lambda}&\coloneqq f(\psi_{\lambda}):M(\lambda)\to\bigoplus_{i=1}^{\ell-1}\bigoplus_{t=1}^{\lambda_{i+1}}M(\smallsup{\lambda}{(i,i+1,t)}).\label{Ja.mapIII}
  \end{align}
As a consequence of the exactness of the Schur functor $f$, it follows that \mbox{$\Sp(\lambda)\cong\coker\bar{\phi}_{\lambda'}$} and $\Sp(\lambda)\cong\ker\bar{\psi}_{\lambda}$. This second isomorphism is an alternative realisation of James' Kernel Intersection Theorem \cite[Corollary 17.18]{J}. These two descriptions of the Specht module $\Sp(\lambda)$ will be crucial for our considerations in this paper.

We set $S\coloneqq S_{\k}(n,r)$ for the Schur algebra. The Schur functor has a partial inverse $g:\k\mathfrak{S}_{r}\modules\rightarrow S\modules$ with $gW\coloneqq Se\otimes_{eSe}W$ for $W\in\k\mathfrak{S}_{r}\modules$, where $e$ denotes a certain idempotent of $S$ \cite[(6.2c)]{G}. This functor is a right-inverse of $f$ and it is right-exact. Given $V\in M_{\k}(n,r)$ and $W\in\k\mathfrak{S}_{r}\modules$, there is a \mbox{$\k$-linear} isomorphism of the form $\Hom_{G}(gW,V)\cong\Hom_{\k\mathfrak{S}_{r}}(W,fV)$. For $\alpha\in\Lambda(n,r)$ one has that $gM(\alpha)\cong S^{\alpha}E$ \cite[Appendix A]{DG2}, and for $\lambda\in\Lambda^{+}(n,r)$ and $p\neq 2$ one has that $g\Sp(\lambda)\cong\nabla(\lambda)$ \cite[Proposition 10.6(i)]{D1}, \cite[Theorem 1.1]{McD}. Further results related to the properties of $g$ will be proved in \Cref{sec.end.alg}, including a new short proof of the fact that $g\Sp(\lambda)\cong\nabla(\lambda)$ for $p\neq 2$.

\section{Endomorphism algebras}\label{sec.end.alg}

\subsection{General Results}\label{subsec.gen.res}

From now on we fix $n\geq r$. Note that for $\lambda\in\Lambda^{+}(n,r)$ we have that $\lambda'\in\Lambda^{+}(n,r)$. First we point out some new properties of the functor $g$ and then we utilise the two different descriptions of the Specht module $\Sp(\lambda)$ to introduce a new description of the endomorphism algebra of $\Sp(\lambda)$.

\begin{Proposition}\label{inverse.g}
  Assume that $p\neq 2$. Then:
    \begin{enumerate}[label=(\roman*), font=\normalfont, ref=(\roman*)]
      \item\label{inverse.g.item.1}For $\alpha\in\Lambda(n,r)$, we have $gM_{s}(\alpha)\cong\Lambda^{\alpha}E$.
      \item\label{inverse.g.item.2}For $\lambda\in\Lambda^{+}(n,r)$, we have $g\Sp(\lambda)\cong\nabla(\lambda)$.
    \end{enumerate}
\end{Proposition}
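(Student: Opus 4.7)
The plan is to prove \ref{inverse.g.item.1} first and deduce \ref{inverse.g.item.2} from it via the two descriptions of $\Sp(\lambda)$ established in \Cref{subsec.Schur}. For \ref{inverse.g.item.1}, I would start from the adjunction $\Hom_{G}(gW,V)\cong\Hom_{\k\mathfrak{S}_{r}}(W,fV)$. Taking $W=M_{s}(\alpha)$ and $V=\Lambda^{\alpha}E$, the isomorphism $f\Lambda^{\alpha}E\cong M_{s}(\alpha)$ shows that the identity of $M_{s}(\alpha)$ determines a canonical $G$-homomorphism $\eta_{\alpha}\colon gM_{s}(\alpha)\to\Lambda^{\alpha}E$ whose image under $f$ is $\operatorname{id}_{M_{s}(\alpha)}$. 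The substantial content is showing that $\eta_{\alpha}$ itself is an isomorphism when $p\neq 2$, equivalently that $\Lambda^{\alpha}E$ lies in the essential image of $g$. My plan is to exploit the involution on $M_{\k}(n,r)$ available in characteristic not $2$ that corresponds under $f$ to tensoring with $\sgn_{r}$ and interchanges $S^{\alpha}E$ with $\Lambda^{\alpha}E$; combined with $gM(\alpha)\cong S^{\alpha}E$ \cite[Appendix A]{DG2}, this identifies $gM_{s}(\alpha)$ with $\Lambda^{\alpha}E$. A more hands-on alternative is a Mackey-style count of the weight multiplicities of $gM_{s}(\alpha)$ via $\Hom_{G}(gM_{s}(\alpha),S^{\beta}E)\cong\Hom_{\k\mathfrak{S}_{r}}(M_{s}(\alpha),M(\beta))$, matched against the known weight multiplicities of $\Lambda^{\alpha}E$.

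Once \ref{inverse.g.item.1} is in hand, part \ref{inverse.g.item.2} follows quickly. Apply the right-exact functor $g$ to the presentation
\[
\bigoplus_{i=1}^{\ell(\lambda')-1}\bigoplus_{s=1}^{\lambda'_{i+1}}M_{s}(\smallsup{\lambda'}{(i,i+1,s)})\to[{\bar{\phi}_{\lambda'}}]M_{s}(\lambda')\to\Sp(\lambda)\to 0
\]
arising from $\Sp(\lambda)\cong\coker\bar{\phi}_{\lambda'}$. Using \ref{inverse.g.item.1} to identify each $gM_{s}(\beta)$ with $\Lambda^{\beta}E$, and using naturality of the adjunction counit to identify $g(\bar{\phi}_{\lambda'})=gf(\phi_{\lambda'})$ with $\phi_{\lambda'}$ under these isomorphisms, we obtain the right-exact sequence
\[
\bigoplus_{i=1}^{\ell(\lambda')-1}\bigoplus_{s=1}^{\lambda'_{i+1}}\Lambda^{\smallsup{\lambda'}{(i,i+1,s)}}E\to[{\phi_{\lambda'}}]\Lambda^{\lambda'}E\to g\Sp(\lambda)\to 0.
\]
The ABW-construction \cite[Theorem II.2.16]{ABW} together with \cite[\S 2.7(5)]{D3} then yields $g\Sp(\lambda)\cong\coker\phi_{\lambda'}\cong L_{\lambda'}(E)\cong\nabla(\lambda)$.

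The main obstacle is \ref{inverse.g.item.1}: uniformly verifying that $\eta_{\alpha}$ is an isomorphism when $p\neq 2$. The hypothesis $p\neq 2$ is indispensable here, since in characteristic $2$ one has $M_{s}(\alpha)=M(\alpha)$ and therefore $gM_{s}(\alpha)\cong S^{\alpha}E\not\cong\Lambda^{\alpha}E$ in general. Once \ref{inverse.g.item.1} is settled, the translation of the $M_{s}$-cokernel description of $\Sp(\lambda)$ into the $\Lambda$-cokernel description of $\nabla(\lambda)$ supplied by Akin, Buchsbaum, and Weyman is a purely formal consequence of right-exactness of $g$ and the naturality of the adjunction counit.
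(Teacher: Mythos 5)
Your part \labelcref{inverse.g.item.2} is essentially the paper's argument: apply the right-exact functor $g$ to the presentation of $\Sp(\lambda)$ as $\coker\bar{\phi}_{\lambda'}$, use part \labelcref{inverse.g.item.1} to identify the two outer terms with $K(\lambda')$ and $\Lambda^{\lambda'}E$ and $g(\bar{\phi}_{\lambda'})$ with $\phi_{\lambda'}$, and invoke the ABW-construction. The problems are in part \labelcref{inverse.g.item.1}, where neither of your two proposed routes actually closes the argument. The ``involution on $M_{\k}(n,r)$ corresponding under $f$ to $-\otimes\sgn_{r}$ and interchanging $S^{\alpha}E$ with $\Lambda^{\alpha}E$'' does not exist as an auto-equivalence once $0<p\leq r$: an equivalence of abelian categories is exact and preserves composition length, yet it would have to carry the simple module $\Lambda^{r}E$ to $S^{r}E\cong\nabla(r)$, which is not simple for $r\geq p$ (e.g.\ the image of the $p$-th power map $E\to S^{p}E$ is a proper non-zero submodule). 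Such a functor is available only in characteristic $0$, which is not the generality you need. That leaves the weight-multiplicity count, and this only establishes $\dim(gM_{s}(\alpha))^{\beta}=\dim(\Lambda^{\alpha}E)^{\beta}$ for all $\beta$. An equality of dimensions is not an isomorphism, and knowing $f(\eta_{\alpha})=\operatorname{id}$ only tells you that your canonical map $\eta_{\alpha}\colon gM_{s}(\alpha)\to\Lambda^{\alpha}E$ is bijective on the $(1^{r})$-weight space.

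The missing ingredient is the surjectivity of $\eta_{\alpha}$, and this is precisely what the second half of the paper's proof supplies: since $gM(1^{r})\cong Se\cong E^{\otimes r}$, applying $g$ to the surjection $M(1^{r})\to M_{s}(\alpha)$ (the image under $f$ of $E^{\otimes r}\to\Lambda^{\alpha}E$) and chasing the resulting commutative square shows that $\eta_{\alpha}$ is onto. Equivalently: $\Lambda^{\alpha}E$ is a quotient of $E^{\otimes r}=Se$, hence is generated by its $(1^{r})$-weight space, so any $G$-map hitting that weight space is surjective. With this one observation your framework does close up \emdash{} surjectivity of $\eta_{\alpha}$ together with the dimension count (which the paper obtains from \cite[Theorem 3.3(ii)]{DJ} and reduction to characteristic $0$, and which your Mackey computation of $\Hom_{\k\mathfrak{S}_{r}}(M_{s}(\alpha),M(\beta))$ would give directly for $p\neq 2$) forces $\eta_{\alpha}$ to be an isomorphism. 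As written, however, the count alone is a genuine gap, and the first route is a dead end.
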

\begin{proof}
  \labelcref{inverse.g.item.1} Recall that for $\beta\in\Lambda(n,r)$ and $V\in M_{\k}(n,r)$, we have a \mbox{$\k$-isomorphism} $\Hom_{G}(V,S^{\beta}E)\cong V^{\beta}$, and so in particular $\dim V^{\beta}=\dim\Hom_{G}(V,S^{\beta}E)$. Moreover, $fS^{\alpha}E\cong M(\alpha)$ and so it follows that:
      \[
        \Hom_{G}(gM_{s}(\alpha),S^{\beta}E)\cong\Hom_{\k\mathfrak{S}_{r}}(M_{s}(\alpha),fS^{\beta}E)\cong\Hom_{\k\mathfrak{S}_{r}}(M_{s}(\alpha),M(\beta)).
      \]
  Now, since $p\neq 2$, the dimension of $\Hom_{\k\mathfrak{S}_{r}}(M_{s}(\alpha),M(\beta))$ does not depend on the value of $p$ \cite[Theorem 3.3(ii)]{DJ}, and so in order to calculate the dimension of $gM_{s}(\alpha)^{\beta}$, we may assume that $p=0$. However, in characteristic $0$, the functors $f$ and $g$ are inverse equivalences of categories and so $gM_{s}(\alpha)\cong\Lambda^{\alpha}E$. Therefore, for $p\neq 2$, we deduce that $\dim gM_{s}(\alpha)^{\beta}=\dim\Lambda^{\alpha}E^{\beta}$ for all $\beta\in\Lambda(n,r)$. Now, recall that for $V\in M_{\k}(n,r)$, we have the weight space decomposition $V=\bigoplus_{\beta\in\Lambda(n,r)}V^{\beta}$ \cite[(3.2c)]{G}, and so it follows that, for $p\neq 2$, we have $\dim gM_{s}(\alpha)=\dim\Lambda^{\alpha}E$.

  Now, we have that $M(1^{r})\cong eSe$ and so $gM(1^{r})\cong Se\otimes_{eSe}eSe\cong Se\cong E^{\otimes r}$ \cite[(6.4f)]{G}. For $\alpha\in\Lambda(n,r)$ we have a surjective \mbox{$G$-homomorphism} $E^{\otimes r}\to\Lambda^{\alpha}E$ and so via the Schur functor, we get a surjective \mbox{$\k\mathfrak{S}_{r}$-homomorphism} $M(1^{r})\to M_{s}(\alpha)$. The functor $g$, being right-exact, preserves surjections, and so the \mbox{$G$-homomorphism} $gM(1^{r})\to gM_{s}(\alpha)$ is surjective. We consider the commutative diagram:
    \[
      \begin{tikzcd}
        gM(1^{r})\arrow{r}{\cong} \arrow{d}[swap]{} & E^{\otimes r} \arrow{d}{} \\
        gM_{s}(\alpha)\arrow[swap]{r}{} & \Lambda^{\alpha}E
      \end{tikzcd},
    \]
  where the horizontal maps are induced from the $\k\mathfrak{S}_{r}$-inclusions $M(1^{r})\cong fE^{\otimes r}\to E^{\otimes r}$ and $M_{s}(\alpha)\cong f\Lambda^{\alpha}E\to\Lambda^{\alpha}E$. The top horizontal map is an isomorphism and the \mbox{right-hand} vertical map is surjective, and so the bottom horizontal map is hence surjective. Since $\dim gM_{s}(\alpha)=\dim\Lambda^{\alpha}E$ away from characteristic $2$, we obtain \mbox{$gM_{s}(\alpha)\cong\Lambda^{\alpha}E$} for $p\neq 2$.

  \labelcref{inverse.g.item.2} Recall that $\nabla(\lambda)\cong\coker\phi_{\lambda'}$, where $\phi_{\lambda'}:K(\lambda')\to\Lambda^{\lambda'}E$ and $K(\lambda')$ is the direct sum of tensor products of exterior powers given in \labelcref{Bu.mapII}, where here we replace the partition $\lambda$ with $\lambda'$. By applying the Schur functor $f$ to $\phi_{\lambda'}$, we obtain the \mbox{$\k\mathfrak{S}_{r}$-homomorphism} $\bar{\phi}_{\lambda'}:\bar{K}(\lambda')\to M_{s}(\lambda')$, where $\bar{K}(\lambda')$ is the direct sum of signed permutation modules given in \labelcref{Ja.mapII}, again substituting $\lambda$ with $\lambda'$. Also, recall that $\Sp(\lambda)\cong\coker\bar{\phi}_{\lambda'}$. By \partCref{inverse.g.item.1}, we have that $gM_{s}(\lambda')\cong\Lambda^{\lambda'}E$ and so $g\bar{K}(\lambda')\cong K(\lambda')$. Hence, we obtain the commutative diagram:
    \[
      \begin{tikzcd}
        g\bar{K}(\lambda')\arrow{r}{g(\bar{\phi}_{\lambda'})} \arrow{d}[swap]{\cong} & gM_{s}(\lambda')\arrow{d}{\cong} \\
        K(\lambda')\arrow[]{r}{\phi_{\lambda'}} & \Lambda^{\lambda'}E
      \end{tikzcd}.
    \]
  The image of $g(\bar{\phi}_{\lambda'})$ is mapped isomorphically onto the image of $\phi_{\lambda'}$, and so in particular $\coker\phi_{\lambda'}\cong\coker g(\bar{\phi}_{\lambda'})$. Finally, $g$ preserves cokernels since it is right-exact, and so we deduce that $\nabla(\lambda)\cong\coker\phi_{\lambda'}\cong\coker g(\bar{\phi}_{\lambda'})\cong g \, \coker\bar{\phi}_{\lambda'}\cong g \Sp(\lambda)$.
\end{proof}

\begin{Lemma}\label{end.alg}
  Let $\alpha,\beta\in\Lambda(n,r)$. Then:
    \begin{enumerate}[label=(\roman*), font=\normalfont, ref=(\roman*)]
      \item\label{end.alg.item.1}$\Hom_{\k\mathfrak{S}_{r}}(M(\alpha),M(\beta))\cong\Hom_{G}(S^{\alpha}E,S^{\beta}E)\cong(S^{\alpha}E)^{\beta}$.
      \item\label{end.alg.item.2}For $p\neq 2$, we have $\Hom_{\k\mathfrak{S}_{r}}(M_{s}(\alpha),M(\beta))\cong\Hom_{G}(\Lambda^{\alpha}E,S^{\beta}E)\cong(\Lambda^{\alpha}E)^{\beta}$.
    \end{enumerate}
\end{Lemma}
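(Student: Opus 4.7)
The plan is to deduce both isomorphisms by combining the adjunction between the Schur functor $f$ and its right-inverse $g$ with the various module identifications collected in \Cref{subsec.Schur} and \Cref{inverse.g}. The key inputs are: (a) the natural $\k$-linear isomorphism $\Hom_G(gW,V)\cong\Hom_{\k\mathfrak{S}_r}(W,fV)$ for $V\in M_\k(n,r)$ and $W\in\k\mathfrak{S}_r\modules$; (b) the identification $fS^\beta E\cong M(\beta)$; and (c) the weight-space identification $\Hom_G(V,S^\beta E)\cong V^\beta$ for any $V\in M_\k(n,r)$.

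For \partCref{end.alg.item.1}, I would substitute $W=M(\alpha)$ and $V=S^\beta E$ in the adjunction and invoke the identification $gM(\alpha)\cong S^\alpha E$ recalled in \Cref{subsec.Schur}. Composing with $fS^\beta E\cong M(\beta)$ yields the first isomorphism $\Hom_{\k\mathfrak{S}_r}(M(\alpha),M(\beta))\cong\Hom_G(S^\alpha E, S^\beta E)$, while the second isomorphism is then an immediate instance of the weight-space identification (c) applied to $V=S^\alpha E$. This part does not require any restriction on the characteristic, as the relevant identifications hold in full generality.

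For \partCref{end.alg.item.2}, the argument is formally identical, except that we use \itemCref{inverse.g}{inverse.g.item.1} in place of $gM(\alpha)\cong S^\alpha E$: under the assumption $p\neq 2$, we have $gM_s(\alpha)\cong\Lambda^\alpha E$. Substituting $W=M_s(\alpha)$ and $V=S^\beta E$ into the adjunction and again composing with $fS^\beta E\cong M(\beta)$ gives the first isomorphism, and applying (c) to $V=\Lambda^\alpha E$ gives the second. The only place where the hypothesis $p\neq 2$ enters is in invoking \Cref{inverse.g}.

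There is no real obstacle here: the entire lemma is a formal consequence of the adjunction $(g,f)$ together with previously established identifications of $g$ on permutation and signed permutation modules. The only mild subtlety is to be explicit that the requirement $p\neq 2$ in \partCref{end.alg.item.2} is inherited from \Cref{inverse.g}, and is not needed for \partCref{end.alg.item.1}.
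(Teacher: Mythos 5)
Your proposal is correct and follows exactly the paper's own argument: the adjunction $\Hom_{G}(gW,V)\cong\Hom_{\k\mathfrak{S}_{r}}(W,fV)$ combined with the identifications $gM(\alpha)\cong S^{\alpha}E$, $fS^{\beta}E\cong M(\beta)$, the weight-space isomorphism $\Hom_{G}(V,S^{\beta}E)\cong V^{\beta}$, and, for \partCref{end.alg.item.2}, \myCref{inverse.g}[inverse.g.item.1]. The paper's proof is just a terser version of the same reasoning, so there is nothing to add.
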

\begin{proof}
  Recall that for $V\in M_{\k}(n,r)$ and $W\in\k\mathfrak{S}_{r}\modules$, we have a \mbox{$\k$-isomorphism} of the form $\Hom_{G}(gW,V)\cong\Hom_{\k\mathfrak{S}_{r}}(W,fV)$. \hyperref[{end.alg}]{Parts (i)-(ii)} then both follow from our comments in \secCref{subsec.pol.rep}, \secCref{subsec.Schur}, and \myCref{inverse.g}[inverse.g.item.1].
\end{proof}

\begin{Lemma}\label{end.alg.Sp}
  Let $\lambda\in\Lambda^{+}(n,r)$. Then:
    \begin{enumerate}[label=(\roman*), font=\normalfont, ref=(\roman*)]
      \item\label{end.alg.Sp.item.1}There is a \mbox{$\k$-isomorphism}:
        \[ \End_{\k\mathfrak{S}_{r}}(\Sp(\lambda))\cong\{h\in\Hom_{\k\mathfrak{S}_{r}}(M_{s}(\lambda'),M(\lambda))\mid h\circ\bar{\phi}_{\lambda'}=0\settext{and}\bar{\psi}_{\lambda}\circ h=0\}. \]
      \item\label{end.alg.Sp.item.2}In particular, when $p=2$, there is a \mbox{$\k$-isomorphism}:
        \[ \End_{\k\mathfrak{S}_{r}}(\Sp(\lambda))\cong\{h\in\Hom_{\k\mathfrak{S}_{r}}(M(\lambda'),M(\lambda))\mid h\circ\bar{\phi}_{\lambda'}=0\settext{and}\bar{\psi}_{\lambda}\circ h=0\}. \]
    \end{enumerate}
\end{Lemma}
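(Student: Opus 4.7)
The plan is to exploit both descriptions of the Specht module recorded in \Cref{subsec.Schur}, namely $\Sp(\lambda)\cong\coker\bar{\phi}_{\lambda'}$ as a quotient of $M_{s}(\lambda')$ and $\Sp(\lambda)\cong\ker\bar{\psi}_{\lambda}$ as a submodule of $M(\lambda)$. Write $\pi\colon M_{s}(\lambda')\twoheadrightarrow\Sp(\lambda)$ for the canonical surjection arising from the first description, and $\iota\colon\Sp(\lambda)\hookrightarrow M(\lambda)$ for the canonical inclusion arising from the second.

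For part (i), I would define a $\k$-linear map
\[
\Theta\colon\End_{\k\mathfrak{S}_{r}}(\Sp(\lambda))\to\Hom_{\k\mathfrak{S}_{r}}(M_{s}(\lambda'),M(\lambda)),\qquad\Theta(\phi)\coloneqq\iota\circ\phi\circ\pi.
\]
Since $\pi\circ\bar{\phi}_{\lambda'}=0$ and $\bar{\psi}_{\lambda}\circ\iota=0$, the composite $\Theta(\phi)$ satisfies both $\Theta(\phi)\circ\bar{\phi}_{\lambda'}=0$ and $\bar{\psi}_{\lambda}\circ\Theta(\phi)=0$, hence lands in the right-hand subspace. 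For the inverse, given $h$ with $h\circ\bar{\phi}_{\lambda'}=0$, the universal property of the cokernel produces a unique $\tilde{h}\colon\Sp(\lambda)\to M(\lambda)$ with $\tilde{h}\circ\pi=h$; the condition $\bar{\psi}_{\lambda}\circ h=0$ combined with surjectivity of $\pi$ then gives $\bar{\psi}_{\lambda}\circ\tilde{h}=0$, so $\tilde{h}$ factors through $\iota$ as $\iota\circ\phi$ for a unique $\phi\in\End_{\k\mathfrak{S}_{r}}(\Sp(\lambda))$. The assignments $\phi\leftrightarrow h$ are mutually inverse by the uniqueness clauses in these two factorisations.

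For part (ii), I would observe that in characteristic $2$ the sign representation $\sgn_{r}$ is the trivial $\k\mathfrak{S}_{r}$-module, so $M_{s}(\alpha)=M(\alpha)\otimes\sgn_{r}\cong M(\alpha)$ canonically for every $\alpha\in\Lambda(n,r)$. Substituting this identification throughout the statement of part (i) (both for the target $M_{s}(\lambda')$ of $\bar{\phi}_{\lambda'}$ and for the domain $M_{s}(\lambda')$ of $h$) yields the claimed description.

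The argument is essentially a diagram chase using the universal properties of cokernel and kernel, so I do not anticipate any substantive obstacle: the content is carried entirely by having both realisations of $\Sp(\lambda)$ simultaneously available, and the passage from part (i) to part (ii) is automatic from the collapse of $\sgn_{r}$ to the trivial module in characteristic $2$.
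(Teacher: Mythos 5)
Your proposal is correct and is exactly the argument the paper intends: the paper's proof simply says part (i) "follows immediately from the two descriptions $\Sp(\lambda)\cong\coker\bar{\phi}_{\lambda'}$ and $\Sp(\lambda)\cong\ker\bar{\psi}_{\lambda}$", and your diagram chase via the universal properties of cokernel and kernel is the spelled-out version of that, with part (ii) obtained in both cases from the identification $M_{s}(\alpha)\cong M(\alpha)$ in characteristic $2$.
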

\begin{proof}
  \PartCref{end.alg.Sp.item.1} follows immediately from the two descriptions of the Specht module: $\Sp(\lambda)\cong\coker\bar{\phi}_{\lambda'}$ and $\Sp(\lambda)\cong\ker\bar{\psi}_{\lambda}$ from \secCref{subsec.Schur}. \PartCref{end.alg.Sp.item.2} then follows from \partCref{end.alg.Sp.item.1} and the fact that the permutation module and the signed permutation module coincide in characteristic $2$.
\end{proof}

Recall the \mbox{$G$-homomorphisms} $\smallsup{\phi}[\lambda]{(i,j,s)}$ and $\smallsup{\psi}[\lambda]{(i,j,t)}$ from \labelcref{General.Buch.I} and \labelcref{General.Jam.I} respectively.

\begin{Lemma}\label{map.images}
  Let $\lambda\in\Lambda^{+}(n)$ with $\ell\coloneqq\ell(\lambda)$. Then:
  \begin{enumerate}[label=(\roman*), font=\normalfont, ref=(\roman*)]
    \item\label{map.images.item.1}$\im\smallsup{\phi}[\lambda]{(i,j,s)}\subseteq\im\phi_{\lambda}$ for $1\leq i<j\leq\ell$, $1\leq s\leq\lambda_{j}$.
    \item\label{map.images.item.2}$\ker\psi_{\lambda}\subseteq\ker\smallsup{\psi}[\lambda]{(i,j,t)}$ for $1\leq i<j\leq\ell$, $1\leq t\leq\lambda_{j}$.
  \end{enumerate}
\end{Lemma}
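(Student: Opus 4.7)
The plan is to prove both parts by induction on the difference $j - i$, with base case $j - i = 1$ being immediate: by construction, $\smallsup{\phi}[\lambda]{(i,i+1,s)}$ is a summand of $\phi_{\lambda}$ and $\smallsup{\psi}[\lambda]{(i,i+1,t)}$ is a component of $\psi_{\lambda}$.

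For the inductive step of \partCref{map.images.item.2}, I would fix $j > i + 1$, assume the result whenever $j' - i' < j - i$, set $\mu \coloneqq \smallsup{\lambda}{(j-1,j,t)}$, and examine the composition $\smallsup{\psi}[\mu]{(i,j-1,t)} \circ \smallsup{\psi}[\lambda]{(j-1,j,t)}$. Using that $\Delta$ is an algebra homomorphism on $S(E)$ together with its coassociativity, a Sweedler-style calculation produces the identity
\[
\smallsup{\psi}[\mu]{(i,j-1,t)} \circ \smallsup{\psi}[\lambda]{(j-1,j,t)} = \smallsup{\psi}[\lambda]{(i,j,t)} + \sum_{p=1}^{t} \sigma_{p} \circ \smallsup{\psi}[\lambda]{(i,j-1,p)},
\]
where each $\sigma_{p}: S^{\smallsup{\lambda}{(i,j-1,p)}}E \to S^{\smallsup{\lambda}{(i,j,t)}}E$ is the \mbox{$G$-homomorphism} that further comultiplies the $j$-th factor into three pieces of sizes $(t-p,\,p,\,\lambda_{j}-t)$ and then multiplies the first piece into position $i$ and the second into position $j-1$. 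For $a \in \ker\psi_{\lambda}$, the adjacent map $\smallsup{\psi}[\lambda]{(j-1,j,t)}$ annihilates $a$, so the left-hand side is zero; and by the inductive hypothesis applied to $(i,j-1,p)$---for which $(j-1)-i < j-i$---each $\smallsup{\psi}[\lambda]{(i,j-1,p)}(a) = 0$. Hence $\smallsup{\psi}[\lambda]{(i,j,t)}(a) = 0$.

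For the inductive step of \partCref{map.images.item.1}, I would carry out the dual argument in the graded Hopf algebra $\Lambda(E)$. With $\nu \coloneqq \smallsup{\lambda}{(j-1,j,s)}$, the parallel identity reads
\[
\smallsup{\phi}[\lambda]{(j-1,j,s)} \circ \smallsup{\phi}[\nu]{(i,j-1,s)} = \varepsilon\,\smallsup{\phi}[\lambda]{(i,j,s)} + \sum_{r=1}^{s} \smallsup{\phi}[\lambda]{(i,j-1,r)} \circ \tau_{r},
\]
where $\varepsilon \in \{\pm 1\}$ collects signs arising from the permutations in the definition of the $\phi$-maps and $\tau_{r}: \Lambda^{\smallsup{\lambda}{(i,j,s)}}E \to \Lambda^{\smallsup{\lambda}{(i,j-1,r)}}E$ comultiplies the $i$-th factor to extract an $(s-r)$-piece, comultiplies the $(j-1)$-th factor to extract an $r$-piece, and wedges both of these with the $j$-th factor. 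The inductive hypothesis provides $\im\smallsup{\phi}[\lambda]{(i,j-1,r)} \subseteq \im\phi_{\lambda}$ for each $r \in \{1,\ldots,s\}$; meanwhile the principal composition lies in $\im\smallsup{\phi}[\lambda]{(j-1,j,s)} \subseteq \im\phi_{\lambda}$ since $(j-1,j)$ is adjacent. Rearranging the displayed identity yields $\im\smallsup{\phi}[\lambda]{(i,j,s)} \subseteq \im\phi_{\lambda}$.

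The hard part will be the explicit Sweedler computations underpinning the two Hopf algebra identities above. Structurally they follow from coassociativity and the compatibility of multiplication with comultiplication in $S(E)$ and $\Lambda(E)$, but the combinatorial reorganisation needed to recognise each error contribution as $\sigma_{p} \circ \smallsup{\psi}[\lambda]{(i,j-1,p)}$ (respectively $\smallsup{\phi}[\lambda]{(i,j-1,r)} \circ \tau_{r}$)---with the index gap strictly reduced to $(j-1)-i$ so that the inductive hypothesis applies---is the crucial move. The sign bookkeeping forced by the graded cocommutativity of $\Lambda(E)$ in \partCref{map.images.item.1} is the principal source of complication in the detailed verification.
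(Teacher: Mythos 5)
Your proposal is correct in outline, but it takes a different route from the paper: the paper does not prove these identities at all, instead citing \cite[Lemma II.2.3, Theorem II.2.16]{ABW} for part (i) (noting that $i+1$ may be replaced by any $j>i$ there), and obtaining part (ii) from the divided-power analogue \cite[Theorem II.3.16]{ABW} by applying contravariant duality to the Weyl-module presentation $\Delta(\lambda)\cong\coker\theta_{\lambda}$, using the identifications ${\theta_{\lambda}}^{\circ}=\psi_{\lambda}$ and $\smallsup{\theta}[\lambda]{(i,j,t){\ss\circ}}=\smallsup{\psi}[\lambda]{(i,j,t)}$. What you propose is essentially a self-contained reproof of the content of those citations: your induction on $j-i$ with the two Hopf-algebra identities is exactly the mechanism behind \cite[Lemma II.2.3]{ABW}, and your direct symmetric-algebra argument for (ii) buys independence from the duality detour at the cost of redoing the Sweedler bookkeeping in $S(E)$. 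The logical skeleton is sound: the left-hand compositions vanish on $\ker\psi_{\lambda}$ (resp.\ land in $\im\smallsup{\phi}[\lambda]{(j-1,j,s)}\subseteq\im\phi_{\lambda}$) because $(j-1,j)$ is an adjacent pair, the error terms are controlled by the inductive hypothesis since $t\leq\lambda_{j}\leq\lambda_{j-1}$ keeps the indices in range, and $\varepsilon=\pm1$ is harmless. Two small points you should make explicit in a full write-up: first, $\mu=\smallsup{\lambda}{(j-1,j,t)}$ and $\nu=\smallsup{\lambda}{(j-1,j,s)}$ need not be partitions, so one must observe that the constructions \labelcref{General.Buch.I} and \labelcref{General.Jam.I} extend verbatim to arbitrary compositions (they do, since only the Hopf structure is used); second, the two displayed identities are asserted rather than verified, and they are precisely the ``hard part'' that the paper outsources to \cite{ABW} --- so as written your argument is complete only modulo the computation you yourself flag.
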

\begin{proof}
  \PartCref{map.images.item.1} follows from \cite[Lemma II.2.3, Theorem II.2.16]{ABW}, where we note that one may replace $i+1$ with any $j>i$ in the statement and proof of \cite[Lemma II.2.3]{ABW} without any harm. For \partCref{map.images.item.2}, we use the ABW-construction of the Weyl module \mbox{$\Delta(\lambda)$ \labelcref{Buc.We.map}}. By \cite[Theorem II.3.16]{ABW} and the comment before\mbox{\cite[Definition II.3.4]{ABW}}, we deduce that $\im\smallsup{\theta}[\lambda]{(i,j,t)}\subseteq\im\theta_{\lambda}$ for $1\leq i<j\leq\ell$ and $1\leq t\leq\lambda_{j}$. Taking contravariant duals, we have that $\ker{\theta_{\lambda}}^{\circ}\subseteq\ker\smallsup{\theta}[\lambda]{(i,j,t){\ss\circ}}$ for all such $i,j,t$. The result follows by recalling the identifications ${\theta_{\lambda}}^{\circ}=\psi_{\lambda}$ and $\smallsup{\theta}[\lambda]{(i,j,t){\ss\circ}}=\smallsup{\psi}[\lambda]{(i,j,t)}$ from \secCref{subsec.pol.rep}.
\end{proof}

Let $\lambda\in\Lambda^{+}(n,r)$. By applying the Schur functor $f$ to the maps $\smallsup{\phi}[\lambda]{(i,j,s)}$ and $\smallsup{\psi}[\lambda]{(i,j,t)}$ of \labelcref{General.Buch.I} and \labelcref{General.Jam.I} respectively, we obtain the \mbox{$\k\mathfrak{S}_{r}$-homomorphisms}:
  \[
    \smallsup{\bar{\phi}}[\lambda]{(i,j,s)}:M_{s}(\smallsup{\lambda}{(i,j,s)})\to M_{s}(\lambda),\qquad\smallsup{\bar{\psi}}[\lambda]{(i,j,t)}:M(\lambda)\to M(\smallsup{\lambda}{(i,j,t)}).
  \]

\begin{Remark}\label{rem.boundaries}
  We may view any partition $\lambda\in\Lambda^{+}(n,r)$ as an $n$-tuple by appending an appropriate number of zeros to $\lambda$. Accordingly, we may relax the dependence on $\ell(\lambda)$ of the maps $\bar{\phi}_{\lambda}$ and $\bar{\psi}_{\lambda}$. We do so by setting $\smallsup{\bar{\phi}}[\lambda]{(i,j,s)}\coloneqq 0$ and $\smallsup{\bar{\psi}}[\lambda]{(i,j,t)}\coloneqq 0$ if $\ell(\lambda)<j\leq n$.
\end{Remark}

By \myCref{end.alg.Sp}[end.alg.Sp.item.2] and \Cref{map.images}, we obtain the following \nameCref{gen.rel}:

\begin{Corollary}\label{gen.rel}
  Assume that $\Char\k=2$ and let $\lambda\in\Lambda^{+}(n,r)$. Then the endomorphism algebra of $\Sp(\lambda)$ may be identified with the \mbox{$\k$-subspace} of $\Hom_{\k\mathfrak{S}_{r}}(M(\lambda'),M(\lambda))$ consisting of those elements $h$ that satisfy:
    \begin{enumerate}[label=(\roman*), font=\normalfont, ref=(\roman*)]
      \item\label{gen.rel.item.1}$h\circ\smallsup{\bar{\phi}}[\lambda']{(i,j,s)}=0$ for $1\leq i<j\leq n$ and $1\leq s\leq\lambda'_{j}$,
      \item\label{gen.rel.item.2}$\smallsup{\bar{\psi}}[\lambda]{(i,j,t)}\circ h=0$ for $1\leq i<j\leq n$ and $1\leq t\leq\lambda_{j}$.
    \end{enumerate}
\end{Corollary}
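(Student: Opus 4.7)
The plan is to deduce the corollary directly from \myCref{end.alg.Sp}[end.alg.Sp.item.2] together with \Cref{map.images}, using the exactness of the Schur functor $f$. First, observe that by the definitions \labelcref{Bu.mapIII} and \labelcref{Ja.mapIII}, the maps $\bar{\phi}_{\lambda'}$ and $\bar{\psi}_{\lambda}$ are the direct-sum assemblies of $\smallsup{\bar{\phi}}[\lambda']{(i,i+1,s)}$ and $\smallsup{\bar{\psi}}[\lambda]{(i,i+1,t)}$ respectively. Hence the two bulk conditions $h\circ\bar{\phi}_{\lambda'}=0$ and $\bar{\psi}_{\lambda}\circ h=0$ of \myCref{end.alg.Sp}[end.alg.Sp.item.2] decouple into the $j=i+1$ special cases of \labelcref{gen.rel.item.1} and \labelcref{gen.rel.item.2}. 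Consequently, the reverse implication of the corollary (the conditions \labelcref{gen.rel.item.1}--\labelcref{gen.rel.item.2} imply an endomorphism of $\Sp(\lambda)$) is automatic.

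For the forward implication, I would upgrade the adjacent vanishing conditions to arbitrary $1\leq i<j\leq n$ by means of \Cref{map.images}. By \myCref{map.images}[map.images.item.1] applied to the partition $\lambda'$, one has $\im\smallsup{\phi}[\lambda']{(i,j,s)}\subseteq\im\phi_{\lambda'}$ for all $1\leq i<j\leq\ell(\lambda')$ and $1\leq s\leq\lambda'_{j}$. Since $f$ is exact, it preserves images and containments of $G$-submodules, yielding $\im\smallsup{\bar{\phi}}[\lambda']{(i,j,s)}\subseteq\im\bar{\phi}_{\lambda'}$. Therefore, if $h\circ\bar{\phi}_{\lambda'}=0$ then $h$ vanishes on each $\im\smallsup{\bar{\phi}}[\lambda']{(i,j,s)}$, establishing \labelcref{gen.rel.item.1}. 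Symmetrically, \myCref{map.images}[map.images.item.2] combined with exactness of $f$ gives $\ker\bar{\psi}_{\lambda}\subseteq\ker\smallsup{\bar{\psi}}[\lambda]{(i,j,t)}$, so that $\bar{\psi}_{\lambda}\circ h=0$ forces $\im h\subseteq\ker\smallsup{\bar{\psi}}[\lambda]{(i,j,t)}$, which is \labelcref{gen.rel.item.2}. The boundary indices $\ell(\lambda')<j\leq n$ and $\ell(\lambda)<j\leq n$ are vacuous by \Cref{rem.boundaries}.

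No substantial obstacle is anticipated: the argument amounts to transporting the image and kernel containments of \Cref{map.images} from polynomial $G$-modules to $\k\mathfrak{S}_{r}$-modules via the exact Schur functor $f$. The only delicate point is that the hypothesis $\Char\k=2$ is essential, both to invoke \myCref{end.alg.Sp}[end.alg.Sp.item.2] and to identify the signed permutation modules appearing as the domains of the $\smallsup{\bar{\phi}}[\lambda']{(i,j,s)}$ with ordinary permutation modules, so that the compositions $h\circ\smallsup{\bar{\phi}}[\lambda']{(i,j,s)}$ with $h\in\Hom_{\k\mathfrak{S}_{r}}(M(\lambda'),M(\lambda))$ are meaningful.
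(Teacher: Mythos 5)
Your proposal is correct and follows exactly the route the paper takes: the paper derives the corollary by citing \myCref{end.alg.Sp}[end.alg.Sp.item.2] together with \Cref{map.images}, and your argument simply spells out how the adjacent cases recover the bulk conditions and how the exactness of $f$ transports the image and kernel containments of \Cref{map.images} to the symmetric group side. No gaps.
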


\subsection{A concrete description}\label{subsec.matrices}

From now on we shall assume that the underlying field $\k$ has characteristic $2$. We write $[r]\coloneqq\{1,\ldots,r\}$ and as always we assume that $n\geq r$. First, we provide a matrix description of a \mbox{$\k$-basis} of $\Hom_{\k\mathfrak{S}_{r}}(M(\alpha),M(\beta))$ for $\alpha,\beta\in\Lambda(n,r)$, and then we shall utilise this description to obtain some crucial information regarding the endomorphism algebra of $\Sp(\lambda)$.

We write $M_{n\times n}(\mathbb{N})$ for the set of $(n\times n)$-matrices with non-negative integer entries. Let $\{e_{i}\mid 1\leq i\leq n\}$ be the standard basis of column vectors of $E$. Then, for $\alpha\in\Lambda(n,r)$, we consider the \mbox{$\k$-basis} $\{e_{1}^{a_{11}}e_{2}^{a_{12}}\ldots e_{n}^{a_{1n}}\otimes\cdots\otimes e_{1}^{a_{n1}}e_{2}^{a_{n2}}\ldots e_{n}^{a_{nn}}\mid\sum_{j}a_{ij}=\alpha_{i}\}$ of $S^{\alpha}E$, where the $i$th tensor factor is defined to be $1$ if $\alpha_{i}=0$ for some $1\leq i\leq n$. We may parametrise this \mbox{$\k$-basis} by the set of all elements of $M_{n\times n}(\mathbb{N})$ whose sequence of row-sums is equal to $\alpha$. Accordingly, for $\beta\in\Lambda(n,r)$, the $\beta$-weight space $(S^{\alpha}E)^{\beta}$ has a \mbox{$\k$-basis} parametrised by the set of all matrices in $M_{n\times n}(\mathbb{N})$ whose sequence of row-sums is equal to $\alpha$, and whose sequence of column-sums is equal to $\beta$. On the other hand, the permutation module $M(\alpha)$ has a \mbox{$\k$-basis} consisting of all ordered sequences of the form $(\bmath{x}_{1}{\mid}\ldots{\mid}\bmath{x}_{n})$, where each $\bmath{x}_{i}=(x_{i1},x_{i2},\ldots,x_{i\alpha_{i}})$ is an unordered sequence with terms from $[r]$, that satisfy the property that for each $k\in[r]$, there is a unique pair $(i,j)$ with $x_{ij}=k$. Here $\bmath{x}_{i}$ denotes the zero sequence whenever $\alpha_{i}=0$.

We set $\Tab(\alpha,\beta)\coloneqq\{A=(a_{ij})_{i,j}\in M_{n\times n}(\mathbb{N})\mid\sum_{j}a_{ij}=\alpha_{i},\sum_{i}a_{ij}=\beta_{j}\}$. We associate to each $A\in\Tab(\alpha,\beta)$, a homomorphism $\rho[A]\in\Hom_{\k\mathfrak{S}_{r}}(M(\alpha),M(\beta))$. We do so as follows: Given a basis element $\bmath{x}\coloneqq(\bmath{x}_{1}{\mid}\ldots{\mid}\bmath{x}_{n})\in M(\alpha)$, we set $\rho[A](\bmath{x})$ to be the sum of all basis elements of $M(\beta)$ that are obtained from $\bmath{x}$ by moving, in concert, $a_{ij}$ entries from its \mbox{$i$th-position} $\bmath{x}_{i}$ to its \mbox{$j$th-position} $\bmath{x}_{j}$ in all possible ways, and for every $1\leq i,j\leq n$. The set $\{\rho[A]\mid A\in\Tab(\alpha,\beta)\}$ is linearly independent. Indeed, take any linear combination of the $\rho[A]$s, say $h=\sum_{A}h[A]\rho[A]$ ($h[A]\in\k$), along with any basis element $\bmath{x}$ of $M(\alpha)$, and then consider the coefficients of the basis elements of $M(\beta)$ in $h(\bmath{x})$. The linear independence of the $\rho[A]$s along with \myCref{end.alg}[end.alg.item.1] give that the set $\{\rho[A]\mid A\in\Tab(\alpha,\beta)\}$ forms a \mbox{$\k$-basis} of $\Hom_{\k\mathfrak{S}_{r}}(M(\alpha),M(\beta))$. Accordingly, for $h\in\Hom_{\k\mathfrak{S}_{r}}(M(\alpha),M(\beta))$ and $A\in\Tab(\alpha,\beta)$, we shall denote by $h[A]\in\k$ the coefficient of $\rho[A]$ in $h$ so that $h=\sum_{A\in\Tab(\alpha,\beta)}h[A]\rho[A]$.

\begin{Examples}\label{exam.basis.elem}
  Let $\lambda\in\Lambda^{+}(n,r)$. For $1\leq i,j\leq n$, denote by $E_{ij}\in M_{n\times n}(\mathbb{N})$ the matrix with a $1$ in its $(i,j)$th-position and $0$s elsewhere. Notice that:
    \begin{enumerate}[label=(\roman*), font=\normalfont, ref=(\roman*)]
      \item\label{exam.basis.elem.item.1} $\smallsup{\bar{\phi}}[\lambda]{(i,j,s)}=\rho[A]$, where $A\coloneqq\diag(\lambda_{1},\ldots,\lambda_{i},\ldots,\lambda_{j}-s,\ldots,\lambda_{n})+s E_{ij}$.
      \item\label{exam.basis.elem.item.2} $\smallsup{\bar{\psi}}[\lambda]{(i,j,t)}=\rho[B]$, where $B\coloneqq\diag(\lambda_{1},\ldots,\lambda_{i},\ldots,\lambda_{j}-t,\ldots,\lambda_{n})+t E_{ji}$.
    \end{enumerate}
\end{Examples}

\begin{Remark}\label{rem.trans}
  Recall the \mbox{$\k$-basis} $\{\rho[A]\mid A\in\Tab(\alpha,\beta)\}$ of $\Hom_{\k\mathfrak{S}_{r}}(M(\alpha),M(\beta))$. For $A\in M_{n\times n}(\mathbb{N})$, we write $A'\in M_{n\times n}(\mathbb{N})$ for the transpose matrix of $A$. If $A\in\Tab(\alpha,\beta)$, then it is clear that $ A'\in\Tab(\beta,\alpha)$. Moreover, the set $\{\rho[A']\mid A\in\Tab(\alpha,\beta)\}$ forms a \mbox{$\k$-basis} of $\Hom_{\k\mathfrak{S}_{r}}(M(\beta),M(\alpha))$.
\end{Remark}

Now, for $\alpha\in\Lambda(n,r)$, recall that the permutation module $M(\alpha)$ is self-dual. We write $d_{\alpha}:M(\alpha)\to M(\alpha)^{*}$ for the \mbox{$\k\mathfrak{S}_{r}$-isomorphism} that sends each basis element $\bmath{x}$ of $M(\alpha)$ to the corresponding basis element of $M(\alpha)^{*}$ dual to $\bmath{x}$. We shall denote by $\zeta_{\alpha,\beta}:\Hom_{\k\mathfrak{S}_{r}}(M(\alpha),M(\beta))\to\Hom_{\k\mathfrak{S}_{r}}(M(\beta)^{*},M(\alpha)^{*})$ the natural \mbox{$\k$-isomorphism}, and by $\eta_{\alpha,\beta}:\Hom_{\k\mathfrak{S}_{r}}(M(\alpha),M(\beta))\to\Hom_{\k\mathfrak{S}_{r}}(M(\beta),M(\alpha))$ the \mbox{$\k$-isomorphism} with $\eta_{\alpha,\beta}(h)=d_{\alpha}^{-1}\circ\zeta_{\alpha,\beta}(h)\circ d_{\beta}$ for $h\in\Hom_{\k\mathfrak{S}_{r}}(M(\alpha),M(\beta))$.

\begin{Lemma}\label{lem.trans}
  Let $\alpha,\beta\in\Lambda(n,r)$. Then $\eta_{\alpha,\beta}(\rho[A])=\rho[A']$ for all $A\in\Tab(\alpha,\beta)$.
\end{Lemma}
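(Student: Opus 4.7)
The plan is to unpack the definition of $\eta_{\alpha,\beta}$ in terms of the standard basis of $M(\alpha)$ and then verify the identity by comparing the coefficients of the matrices of $\rho[A]$ and $\rho[A']$ on either side.

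First I would observe that, by the very definition of $d_\alpha$ and $\zeta_{\alpha,\beta}$, the map $\eta_{\alpha,\beta}$ is the \emph{transpose} of $h$ with respect to the two chosen bases: if $h(\bmath{x})=\sum_{\bmath{y}} c_{\bmath{x},\bmath{y}}\bmath{y}$ for basis vectors $\bmath{x}\in M(\alpha)$, $\bmath{y}\in M(\beta)$, then $\eta_{\alpha,\beta}(h)(\bmath{y})=\sum_{\bmath{x}} c_{\bmath{x},\bmath{y}}\bmath{x}$. So the \nameCref{lem.trans} reduces to showing that, for any basis vectors $\bmath{x}=(\bmath{x}_1\mid\ldots\mid\bmath{x}_n)\in M(\alpha)$ and $\bmath{y}=(\bmath{y}_1\mid\ldots\mid\bmath{y}_n)\in M(\beta)$, the coefficient of $\bmath{y}$ in $\rho[A](\bmath{x})$ coincides with the coefficient of $\bmath{x}$ in $\rho[A'](\bmath{y})$.

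Next I would describe both coefficients combinatorially. Unravelling the definition, the coefficient of $\bmath{y}$ in $\rho[A](\bmath{x})$ equals the number of matrices $(X_{ij})_{1\leq i,j\leq n}$ of subsets of $[r]$ such that, for each pair $(i,j)$, $X_{ij}\subseteq\bmath{x}_i$ with $|X_{ij}|=a_{ij}$, and such that $\bigsqcup_i X_{ij}=\bmath{y}_j$ (the column-sum constraint is automatic from $\sum_i a_{ij}=\beta_j=|\bmath{y}_j|$). Symmetrically, applying the same unpacking with $A'$ in place of $A$ and the roles of $\bmath{x}$ and $\bmath{y}$ swapped, the coefficient of $\bmath{x}$ in $\rho[A'](\bmath{y})$ counts matrices $(Z_{ij})$ of subsets of $[r]$ with $Z_{ij}\subseteq\bmath{y}_i$, $|Z_{ij}|=a_{ji}$, and $\bigsqcup_i Z_{ij}=\bmath{x}_j$.

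The assignment $(X_{ij})\mapsto(Z_{ij})$ defined by $Z_{ij}\coloneqq X_{ji}$ then gives a bijection between the two collections of data: the size condition $|Z_{ij}|=|X_{ji}|=a_{ji}$ matches the $(i,j)$-entry of $A'$, the row constraint $Z_{ij}\subseteq\bmath{y}_i$ becomes $X_{ji}\subseteq\bmath{y}_i$ which is precisely the column constraint $\bigsqcup_k X_{ki}=\bmath{y}_i$ for the original family, and the column constraint $\bigsqcup_i Z_{ij}=\bmath{x}_j$ becomes the original row constraint $\bigsqcup_i X_{ji}=\bmath{x}_j$. Hence both coefficients are equal, which establishes the lemma.

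The proof is essentially bookkeeping; there is no serious obstacle. The only point that requires a moment of care is ensuring that the transpose with respect to the chosen self-duality $d_\alpha$ agrees with the naive matrix-transpose at the level of the bases $\{\rho[A]\}$, and that the two combinatorial counts really do match entry by entry under $(i,j)\leftrightarrow(j,i)$. Once the counts are set up symmetrically, the bijection $X_{ij}\leftrightarrow Z_{ji}$ makes the statement transparent.
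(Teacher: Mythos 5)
Your proposal is correct and supplies exactly the calculation that the paper leaves to the reader: unpacking $\eta_{\alpha,\beta}$ as the matrix transpose with respect to the standard bases (via $d_\alpha$, $d_\beta$ and the dual basis), and then matching the coefficient of $\bmath{y}$ in $\rho[A](\bmath{x})$ with the coefficient of $\bmath{x}$ in $\rho[A'](\bmath{y})$ through the bijection $X_{ij}\leftrightarrow Z_{ji}$ on the families of moved entries. No gaps; this is the intended argument.
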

\begin{proof}
  This is a simple calculation which we leave to the reader. \qedhere
\end{proof}

\begin{Definition}\label{def.trans.not}
  For $h\in\Hom_{\k\mathfrak{S}_{r}}(M(\alpha),M(\beta))$, we shall denote by $h'$ the homomorphism $\eta_{\alpha,\beta}(h)\in\Hom_{\k\mathfrak{S}_{r}}(M(\beta),M(\alpha))$ and call it the \emph{transpose homomorphism of $h$}.
\end{Definition}

Notice that if $h=\sum_{A\in\Tab(\alpha,\beta)}h[A]\rho[A]$, then $h'=\sum_{A\in\Tab(\alpha,\beta)}h[A]\rho[A']$ by \Cref{lem.trans}.

\begin{Lemma}\label{lem.trans.comp}
  Let $\alpha,\beta,\gamma\in\Lambda(n,r)$. Then we have the identity $(h_{2}\circ h_{1})'=h'_{1}\circ h'_{2}$ for all \mbox{$h_{1}\in\Hom_{\k\mathfrak{S}_{r}}(M(\alpha),M(\beta))$} and \mbox{$h_{2}\in\Hom_{\k\mathfrak{S}_{r}}(M(\beta),M(\gamma))$}.
\end{Lemma}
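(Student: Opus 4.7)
The plan is to unwind the definition of the transpose homomorphism in terms of the $\k$-dual and then invoke the standard contravariance of dualization for compositions. Specifically, writing $h^{*}\coloneqq\zeta_{\alpha,\beta}(h)$ for the $\k$-dual of $h\in\Hom_{\k\mathfrak{S}_{r}}(M(\alpha),M(\beta))$, the definition just before the lemma reads $h'=d_{\alpha}^{-1}\circ h^{*}\circ d_{\beta}$. Since dualization is a contravariant functor on $\k\mathfrak{S}_{r}\modules$, we have the familiar identity $(h_{2}\circ h_{1})^{*}=h_{1}^{*}\circ h_{2}^{*}$ at the level of duals.

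With that in hand, I would simply compute, starting from $(h_{2}\circ h_{1})'=d_{\alpha}^{-1}\circ(h_{2}\circ h_{1})^{*}\circ d_{\gamma}$, apply the contravariance of $*$, and then insert $d_{\beta}\circ d_{\beta}^{-1}=\operatorname{id}_{M(\beta)}$ in the middle so as to regroup the result as
\[
\bigl(d_{\alpha}^{-1}\circ h_{1}^{*}\circ d_{\beta}\bigr)\circ\bigl(d_{\beta}^{-1}\circ h_{2}^{*}\circ d_{\gamma}\bigr)=\eta_{\alpha,\beta}(h_{1})\circ\eta_{\beta,\gamma}(h_{2})=h_{1}'\circ h_{2}'.
\]
This chain of equalities is the whole argument; the only thing to verify is that $\zeta$ is indeed the contravariant dualization functor, which is immediate from how $\zeta_{\alpha,\beta}$ was introduced (it is the natural $\k$-isomorphism $\Hom(M(\alpha),M(\beta))\to\Hom(M(\beta)^{*},M(\alpha)^{*})$, i.e.\ taking $h$ to $h^{*}$).

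There is no real obstacle here: the lemma is a formal consequence of the definition of $\eta_{\alpha,\beta}$ via the self-dualities $d_{\alpha},d_{\beta},d_{\gamma}$ and the contravariance of the $\k$-dual. An alternative, more concrete route would be to expand $h_{1}=\sum_{A}h_{1}[A]\rho[A]$ and $h_{2}=\sum_{B}h_{2}[B]\rho[B]$, use \Cref{lem.trans} to rewrite both sides in the basis $\{\rho[C']\}$, and check the claim on structure constants $\rho[B]\circ\rho[A]$; but this is less transparent and duplicates information already encoded in the self-duality of permutation modules, so I would stick with the three-line duality argument above.
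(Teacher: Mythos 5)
Your argument is exactly the paper's proof: the authors likewise invoke the contravariance identity $\zeta_{\alpha,\gamma}(h_{2}\circ h_{1})=\zeta_{\alpha,\beta}(h_{1})\circ\zeta_{\beta,\gamma}(h_{2})$ and then insert $d_{\beta}\circ d_{\beta}^{-1}$ to regroup the composite as $h_{1}'\circ h_{2}'$. The proposal is correct and essentially identical in approach.
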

\begin{proof}
  Since $\zeta_{\alpha,\gamma}(h_{2}\circ h_{1})=\zeta_{\alpha,\beta}(h_{1})\circ\zeta_{\beta,\gamma}(h_{2})$, we have:
    \begin{align*}
      (h_{2}\circ h_{1})'&=d_{\alpha}^{-1}\circ\zeta_{\alpha,\beta}(h_{1})\circ\zeta_{\beta,\gamma}(h_{2})\circ d_{\gamma} \\
      &=(d_{\alpha}^{-1}\circ\zeta_{\alpha,\beta}(h_{1})\circ d_{\beta})\circ(d_{\beta}^{-1}\circ\zeta_{\beta,\gamma}(h_{2})\circ d_{\gamma})=h'_{1}\circ h'_{2}. \qedhere
    \end{align*}
\end{proof}

\begin{Lemma}\label{lem.trans.specht}
  Let $\lambda\in\Lambda^{+}(n,r)$ and $h\in\Hom_{\k\mathfrak{S}_{r}}(M(\lambda'),M(\lambda))$. Then:
    \begin{enumerate}[label=(\roman*), font=\normalfont, ref=(\roman*)]
      \item\label{lem.trans.specht.item.1}$(h\circ\smallsup{\bar{\phi}}[\lambda']{(i,j,s)})'=\smallsup{\bar{\psi}}[\lambda']{(i,j,s)}\circ h'$.
      \item\label{lem.trans.specht.item.2}$(\smallsup{\bar{\psi}}[\lambda]{(i,j,t)}\circ h)'=h'\circ\smallsup{\bar{\phi}}[\lambda]{(i,j,t)}$.
      \item\label{lem.trans.specht.item.3}The map $\eta_{\lambda',\lambda}$ induces a \mbox{$\k$-isomorphism} $\bar{\eta}_{\lambda}:\End_{\k\mathfrak{S}_{r}}(\Sp(\lambda))\to\End_{\k\mathfrak{S}_{r}}(\Sp(\lambda'))$.
    \end{enumerate}
\end{Lemma}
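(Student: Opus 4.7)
The plan is to treat parts \ref{lem.trans.specht.item.1} and \ref{lem.trans.specht.item.2} in one stroke using \Cref{lem.trans.comp} together with the matrix descriptions in \Cref{exam.basis.elem}, and then to deduce part \ref{lem.trans.specht.item.3} from the characterisation of the endomorphism algebra in \Cref{gen.rel}.

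For parts \ref{lem.trans.specht.item.1} and \ref{lem.trans.specht.item.2}, the key observation is that, by \Cref{exam.basis.elem}, both $\smallsup{\bar{\phi}}[\lambda]{(i,j,s)}$ and $\smallsup{\bar{\psi}}[\lambda]{(i,j,s)}$ equal $\rho[\cdot]$ applied to matrices that differ only by swapping the off-diagonal contribution $E_{ij}$ for $E_{ji}$; since the diagonal part is symmetric, these two matrices are mutual transposes. \Cref{lem.trans} therefore yields $(\smallsup{\bar{\phi}}[\lambda]{(i,j,s)})'=\smallsup{\bar{\psi}}[\lambda]{(i,j,s)}$ and, by the same calculation, $(\smallsup{\bar{\psi}}[\lambda]{(i,j,t)})'=\smallsup{\bar{\phi}}[\lambda]{(i,j,t)}$. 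Replacing $\lambda$ by $\lambda'$ in the first identity and combining with \Cref{lem.trans.comp} yields part \ref{lem.trans.specht.item.1}, while combining the second identity with \Cref{lem.trans.comp} yields part \ref{lem.trans.specht.item.2}.

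For part \ref{lem.trans.specht.item.3}, \Cref{gen.rel} identifies $\End_{\k\mathfrak{S}_{r}}(\Sp(\lambda))$ as the subspace of $\Hom_{\k\mathfrak{S}_{r}}(M(\lambda'),M(\lambda))$ cut out by the relations $h\circ\smallsup{\bar{\phi}}[\lambda']{(i,j,s)}=0$ and $\smallsup{\bar{\psi}}[\lambda]{(i,j,t)}\circ h=0$. Since $(\lambda')'=\lambda$, applying the same corollary to $\lambda'$ identifies $\End_{\k\mathfrak{S}_{r}}(\Sp(\lambda'))$ as the subspace of $\Hom_{\k\mathfrak{S}_{r}}(M(\lambda),M(\lambda'))$ cut out by $g\circ\smallsup{\bar{\phi}}[\lambda]{(i,j,t)}=0$ and $\smallsup{\bar{\psi}}[\lambda']{(i,j,s)}\circ g=0$. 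Parts \ref{lem.trans.specht.item.1} and \ref{lem.trans.specht.item.2} then show that if $h\in\End_{\k\mathfrak{S}_{r}}(\Sp(\lambda))$, then $h'=\eta_{\lambda',\lambda}(h)$ satisfies exactly these transposed relations, so $\eta_{\lambda',\lambda}$ restricts to a well-defined $\k$-linear map $\bar{\eta}_{\lambda}$. Since $\eta_{\lambda',\lambda}$ is already a $\k$-isomorphism on the full Hom spaces whose inverse $\eta_{\lambda,\lambda'}$ restricts symmetrically to the endomorphism algebra of $\Sp(\lambda')$, we conclude that $\bar{\eta}_{\lambda}$ is a $\k$-isomorphism.

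I do not anticipate a significant obstacle: once the matrix identification $(\smallsup{\bar{\phi}}[\lambda]{(i,j,s)})'=\smallsup{\bar{\psi}}[\lambda]{(i,j,s)}$ is read off from \Cref{exam.basis.elem}, the rest is a formal manipulation with \Cref{lem.trans}, \Cref{lem.trans.comp}, and \Cref{gen.rel}.
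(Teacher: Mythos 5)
Your proposal is correct and takes essentially the same route as the paper: both read off $(\smallsup{\bar{\phi}}[\lambda]{(i,j,s)})'=\smallsup{\bar{\psi}}[\lambda]{(i,j,s)}$ from \Cref{lem.trans} and \Cref{exam.basis.elem}, deduce parts (i)--(ii) from \Cref{lem.trans.comp}, and obtain (iii) by transporting the defining relations of the endomorphism algebra and noting that $\eta_{\lambda,\lambda'}$ restricts to the inverse. The only inessential difference is that you cite \Cref{gen.rel} where the paper invokes \Cref{end.alg.Sp} directly.
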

\begin{proof}
  By \Cref{lem.trans} and the examples in \Cref{exam.basis.elem}, it follows that $(\smallsup{\bar{\phi}}[\lambda]{(i,j,t)})'=\smallsup{\bar{\psi}}[\lambda]{(i,j,t)}$. Now, \hyperref[{lem.trans.specht}]{parts (i)-(ii)} follow directly from \Cref{lem.trans.comp}. For \partCref{lem.trans.specht.item.3}, notice that \Cref{end.alg.Sp} gives that any element $\bar{h}\in\End_{\k\mathfrak{S}_{r}}(\Sp(\lambda))$ may be identified with a homomorphism \mbox{$h\in\Hom_{\k\mathfrak{S}_{r}}(M(\lambda'),M(\lambda))$} such that $h\circ\smallsup{\bar{\phi}}[\lambda']{(i,i+1,s)}=0$ for $1\leq i<n$, $1\leq s\leq\lambda'_{i+1}$, and also $\smallsup{\bar{\psi}}[\lambda]{(i,i+1,t)}\circ h=0$ for $1\leq i<n$, $1\leq t\leq\lambda_{i+1}$. By \hyperref[{lem.trans.specht}]{parts (i)-(ii)}, we deduce that $\smallsup{\bar{\psi}}[\lambda']{(i,i+1,s)}\circ h'=0$ and $h'\circ\smallsup{\bar{\phi}}[\lambda]{(i,i+1,t)}=0$ for all such $i,s,t$ and so $h'$ induces an endomorphism of $\Sp(\lambda')$, $\bar{h'}$ say. Therefore, it follows that the map $\eta_{\lambda',\lambda}$ induces a \mbox{$\k$-homomorphism} \mbox{$\bar{\eta}_{\lambda}: \End_{\k\mathfrak{S}_{r}}(\Sp(\lambda))\to\End_{\k\mathfrak{S}_{r}}(\Sp(\lambda'))$} with $\bar{h}\mapsto\bar{h'}$. By applying the same procedure to the map $\eta_{\lambda,\lambda'}$, we see that $\bar{\eta}_{\lambda}$ is a \mbox{$\k$-isomorphism} with inverse $\bar{\eta}_{\lambda'}$ as required.
\end{proof}

For $A=(a_{ij})_{i,j}\in M_{n\times n}(\mathbb{Z})$ and $1\leq k,l\leq n$, we shall write $\smallsup{A}{(k,l)}$ for the element of $M_{n\times n}(\mathbb{Z})$ with entries given by $\smallsup{a}[ij]{(k,l)}\coloneqq a_{ij}+\smallsub{\delta}{(i,j),(k,l)}$, and $\smallsub{A}{(k,l)}$ for the element of $M_{n\times n}(\mathbb{Z})$ with entries given by $\smallsub{a}{(k,l)}_{ij}\coloneqq a_{ij}-\smallsub{\delta}{(i,j),(k,l)}$. Let $\alpha,\beta\in\Lambda(n,r)$ with $A\in\Tab(\alpha,\beta)$, and let $1\leq i<j\leq n$, $1\leq k,l\leq n$. Note that \mbox{$\smallsubsup{A}{(j,l)}{(i,l)}\in\Tab(\smallsup{\alpha}{(i,j,1)},\beta)$} if $a_{jl}\neq 0$, whilst \mbox{$\smallsubsup{A}{(k,j)}{(k,i)}\in\Tab(\alpha,\smallsup{\beta}{(i,j,1)})$} if $a_{kj}\neq 0$.

\bigskip

Henceforth, we denote by $\mathcal{T}_{\lambda}$ the set $\Tab(\lambda',\lambda)$ for $\lambda\in\Lambda^{+}(n,r)$.

\begin{Lemma}\label{lem.compos}
  Let $\lambda\in\Lambda^{+}(n,r)$ and $1\leq i<j\leq n$. For $A\in\mathcal{T}_{\lambda}$ we have:
    \begin{enumerate}[label=(\roman*), font=\normalfont, ref=(\roman*)]
      \item\label{lem.compos.item.1}$\rho[A]\circ\smallsup{\bar{\phi}}[\lambda']{(i,j,1)}=\sum_{l}(a_{il}+1)\rho\sq{\smallsubsup{A}{(j,l)}{(i,l)}}$,
      where the sum is over all $l$ such that $a_{jl}\neq 0$.
      \item\label{lem.compos.item.2}$\smallsup{\bar{\psi}}[\lambda]{(i,j,1)}\circ\rho[A]=\sum_{k}(a_{ki}+1)\rho\sq{\smallsubsup{A}{(k,j)}{(k,i)}}$,
      where the sum is over all $k$ such that $a_{kj}\neq 0$.
    \end{enumerate}
\end{Lemma}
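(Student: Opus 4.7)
The plan is to prove (i) by a direct combinatorial count on basis elements, and then to obtain (ii) either by a parallel argument or via the transpose tools of \Cref{lem.trans.specht}.

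For (i), first unwind $\smallsup{\bar{\phi}}[\lambda']{(i,j,1)}=\rho[B]$ (as in \myCref{exam.basis.elem}[exam.basis.elem.item.1]) to observe that this map sends a basis element $\bmath{x}=(\bmath{x}_{1}{\mid}\cdots{\mid}\bmath{x}_{n})$ of $M(\smallsup{\lambda'}{(i,j,1)})$ (for which $|\bmath{x}_{i}|=\lambda'_{i}+1$ and $|\bmath{x}_{j}|=\lambda'_{j}-1$) to $\sum_{e\in\bmath{x}_{i}}\bmath{x}^{[e]}$, where $\bmath{x}^{[e]}$ denotes the basis element of $M(\lambda')$ obtained from $\bmath{x}$ by relocating $e$ from $\bmath{x}_{i}$ into $\bmath{x}_{j}$. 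Applying $\rho[A]$ to each $\bmath{x}^{[e]}$ then expresses $\rho[A]\circ\rho[B](\bmath{x})$ as a double sum over pairs $(e,\text{redistribution})$, where the redistribution chooses, for each source row $k$ of $\bmath{x}^{[e]}$, a partition of its entries into chunks of sizes $a_{k1},\ldots,a_{kn}$ to send to target columns $1,\ldots,n$. I would then reorganise this double sum by tracking, for each resulting term, the target column $l$ into which the distinguished entry $e$ ultimately lands. Since $e$ sits in position $j$ of $\bmath{x}^{[e]}$, it must be among the $a_{jl}$ entries that $\rho[A]$ sends from source $j$ into column $l$; so only $l$ with $a_{jl}\neq 0$ can contribute, and for such an $l$ the resulting target configuration is precisely a term of $\rho[C_{l}](\bmath{x})$ with $C_{l}\coloneqq\smallsubsup{A}{(j,l)}{(i,l)}$: column $l$ receives $a_{il}+1$ entries from $\bmath{x}_{i}$ (the $a_{il}$ chosen from $\bmath{x}_{i}\setminus\{e\}$ together with $e$) and $a_{jl}-1$ from $\bmath{x}_{j}$, while every other column $m$ receives $a_{im}$ entries from $\bmath{x}_{i}$ and $a_{jm}$ from $\bmath{x}_{j}$, matching the prescription of $\rho[C_{l}]$ exactly.

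The key step will be the multiplicity count: given a target configuration from $\rho[C_{l}](\bmath{x})$, the entry $e$ must be one of the $a_{il}+1$ entries from $\bmath{x}_{i}$ appearing in column $l$ of the target, and each such choice determines both $\bmath{x}^{[e]}$ and the underlying redistribution uniquely; hence each configuration appears with multiplicity exactly $a_{il}+1$, and summing over admissible $l$ yields (i). For (ii), I would either mirror the above argument---using \myCref{exam.basis.elem}[exam.basis.elem.item.2] to realise $\smallsup{\bar{\psi}}[\lambda]{(i,j,1)}$ as the $\rho$-map that sums over choices of an entry in position $j$ of a target configuration to move into position $i$, then tracking the source row $k$ of $\bmath{x}$ from which that entry originated under $\rho[A]$---or deduce (ii) directly from (i) by transposition: by \Cref{lem.trans,lem.trans.comp} together with the identity $(\smallsup{\bar{\phi}}[\lambda]{(i,j,1)})'=\smallsup{\bar{\psi}}[\lambda]{(i,j,1)}$ noted in the proof of \Cref{lem.trans.specht}, one obtains \mbox{$(\smallsup{\bar{\psi}}[\lambda]{(i,j,1)}\circ\rho[A])'=\rho[A']\circ\smallsup{\bar{\phi}}[\lambda]{(i,j,1)}$}, which falls directly under (i) applied with $\lambda$ replaced by $\lambda'$ and $A$ by $A'\in\mathcal{T}_{\lambda'}$; transposing once more (via \Cref{lem.trans}) yields the desired formula. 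The main obstacle throughout is the careful multiplicity bookkeeping; once in place, the remaining computation is mechanical.
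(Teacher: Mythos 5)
Your proof of part \labelcref{lem.compos.item.1} is correct and is essentially the paper's argument: the same unwinding of $\smallsup{\bar{\phi}}[\lambda']{(i,j,1)}$ on a basis element $\bmath{x}$ of $M(\smallsup{\lambda'}{(i,j,1)})$, followed by the same multiplicity count of $a_{il}+1$ for each target configuration (the paper fixes a target basis element and counts which of the intermediate terms map onto it, which is exactly your count of admissible choices of the distinguished entry $e$ read in the other direction), together with the same observation that each target appears in $\rho\sq{\smallsubsup{A}{(j,l)}{(i,l)}}(\bmath{x})$ for a unique $l$. For part \labelcref{lem.compos.item.2} the paper simply states that the argument is similar; your alternative deduction via transposition is also legitimate, since \Cref{lem.trans}, \Cref{lem.trans.comp}, and the identity $(\smallsup{\bar{\phi}}[\lambda]{(i,j,t)})'=\smallsup{\bar{\psi}}[\lambda]{(i,j,t)}$ all precede this lemma and do not depend on it.
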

\begin{proof}
  We shall only prove \partCref{lem.compos.item.1} since \partCref{lem.compos.item.2} is similar. We may assume that $j\leq\ell(\lambda')$. Fix $1\leq i<j\leq\ell(\lambda')$, and we denote by $\bmath{x}\coloneqq(\bmath{x}_{1}{\mid}\ldots{\mid}\bmath{x}_{i}{\mid}\ldots{\mid}\bmath{x}_{j}{\mid}\ldots{\mid}\bmath{x}_{n})$ a basis element of $M(\smallsup{\lambda'}{(i,j,1)})$, where $\bmath{x}_{i}=(x_{i1},\ldots,x_{i(\lambda'_{i}+1)})$ say. Then $\smallsup{\bar{\phi}}[\lambda']{(i,j,1)}(\bmath{x})=\sum_{\smash[t]{k=1}}^{\smash[t]{\lambda'_{i}}+1}\bmath{x}^{k}$, where $\bmath{x}^{k}$ denotes the basis element of $M(\lambda')$ that is obtained from $\bmath{x}$ by omitting the entry $x_{ik}$ from the sequence $\bmath{x}_{i}$ and placing it in the (unordered) sequence $\bmath{x}_{j}$. For \mbox{$1\leq k\leq\lambda'_{i}+1$}, we have $\rho[A](\bmath{x}^{k})=\sum_{t}c_{kt}\bmath{z}[t]$, where the $\bmath{z}[t]$ are the basis elements of $M(\lambda)$ and the $c_{kt}$ are constants with $c_{kt}\in\{0,1\}$. Then $\rho[A]\circ\smallsup{\bar{\phi}}[\lambda']{(i,j,1)}(\bmath{x})=\sum_{t}c_{t}\bmath{z}[t]$ where $c_{t}\coloneqq\sum_{\smash[t]{k=1}}^{\smash[t]{\lambda'_{i}+1}}c_{kt}$. Now, fix $1\leq k\leq\lambda'_{i}+1$ and some $s$ with $c_{ks}=1$. Then, suppose that the entry $x_{ik}$ appears in the $l$th-position $\bmath{z}[s]_{l}$ of $\bmath{z}[s]$ and hence $a_{jl}\neq 0$. Note that the sequence $\bmath{z}[s]_{l}$ contains $a_{il}$ entries from $\{x_{i1},\ldots,x_{i(k-1)},x_{i(k+1)},\ldots,x_{i(\lambda'_{i}+1)}\}$. If $x_{iv}$ is such an entry with $v\neq k$, then $c_{vs}=1$. On the other hand, given $1\leq q\leq\lambda'_{i}+1$, if $x_{iq}$ does not appear as an entry in $\bmath{z}[s]_{l}$, then $c_{qs}=0$. It follows that $c_{s}=a_{il}+1$. Meanwhile, given $1\leq l'\leq n$, $\bmath{z}[s]$ appears in $\rho\sq{{\smallsubsup{A}{(j,\smash[t]{l'})}{(i,\smash[t]{l'})}}}(\bmath{x})$ if and only if $l'=l$, in which case it appears with a coefficient of $1$. The result follows.
\end{proof}

\begin{Lemma}\label{lem.gen.rel}
  Let $\lambda\in\Lambda^{+}(n,r)$ and consider a homomorphism $h\in\Hom_{\k\mathfrak{S}_{r}}(M(\lambda'),M(\lambda))$ with $h=\sum_{A\in\mathcal{T}_{\lambda}}h[A]\rho[A]$. Then for $1\leq i<j\leq n$, we have:
    \begin{enumerate}[label=(\roman*), font=\normalfont, ref=(\roman*)]
      \item\label{lem.gen.rel.item.1}$h\circ\smallsup{\bar{\phi}}[\lambda']{(i,j,1)}=0$ if and only if $\sum_{l}b_{il}h\sq{\smallsubsup{B}{(i,l)}{(j,l)}}=0$ for all $B\in\Tab(\smallsup{\lambda'}{(i,j,1)},\lambda)$.
      \item\label{lem.gen.rel.item.2}$\smallsup{\bar{\psi}}[\lambda]{(i,j,1)}\circ h=0$ if and only if $\sum_{k}d_{ki}h\sq{\smallsubsup{D}{(k,i)}{(k,j)}}=0$ for all $D\in\Tab(\lambda',\smallsup{\lambda}{(i,j,1)})$.
    \end{enumerate}
\end{Lemma}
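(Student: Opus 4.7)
The proof is really a reindexing exercise on top of Lemma \ref{lem.compos}, so the plan is straightforward.

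The plan is to prove part \labelcref{lem.gen.rel.item.1} by expanding $h\circ\smallsup{\bar{\phi}}[\lambda']{(i,j,1)}$ in the $\k$-basis $\{\rho[B]\mid B\in\Tab(\smallsup{\lambda'}{(i,j,1)},\lambda)\}$ of $\Hom_{\k\mathfrak{S}_{r}}(M(\smallsup{\lambda'}{(i,j,1)}),M(\lambda))$ and reading off when each coefficient vanishes. By linearity and \myCref{lem.compos}[lem.compos.item.1],
\[
  h\circ\smallsup{\bar{\phi}}[\lambda']{(i,j,1)}=\sum_{A\in\mathcal{T}_{\lambda}}h[A]\sum_{l\,:\,a_{jl}\neq 0}(a_{il}+1)\,\rho\sq{\smallsubsup{A}{(j,l)}{(i,l)}},
\]
so it suffices to compute, for each fixed $B\in\Tab(\smallsup{\lambda'}{(i,j,1)},\lambda)$, the coefficient of $\rho[B]$ on the right-hand side.

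The key reindexing is as follows. For each $l$ with $b_{il}\geq 1$, set $A\coloneqq\smallsubsup{B}{(i,l)}{(j,l)}$; a quick check on row-sums and column-sums shows that $A\in\mathcal{T}_{\lambda}$, and one has $\smallsubsup{A}{(j,l)}{(i,l)}=B$ together with $a_{jl}=b_{jl}+1\neq 0$ and $a_{il}+1=b_{il}$. Conversely, any pair $(A,l)$ contributing $\rho[B]$ to the above expansion arises in this way. Consequently, the coefficient of $\rho[B]$ equals
\[
  \sum_{l\,:\,b_{il}\geq 1}b_{il}\,h\sq{\smallsubsup{B}{(i,l)}{(j,l)}}=\sum_{l}b_{il}\,h\sq{\smallsubsup{B}{(i,l)}{(j,l)}},
\]
where the final sum may range over all $l\in[n]$ because the factor $b_{il}$ kills any term with $b_{il}=0$. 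Since the $\rho[B]$ are linearly independent, $h\circ\smallsup{\bar{\phi}}[\lambda']{(i,j,1)}=0$ if and only if all of these coefficients vanish, which is the stated criterion.

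For part \labelcref{lem.gen.rel.item.2}, the same strategy works using \myCref{lem.compos}[lem.compos.item.2] instead: expand $\smallsup{\bar{\psi}}[\lambda]{(i,j,1)}\circ h$ in the basis $\{\rho[D]\mid D\in\Tab(\lambda',\smallsup{\lambda}{(i,j,1)})\}$ of $\Hom_{\k\mathfrak{S}_{r}}(M(\lambda'),M(\smallsup{\lambda}{(i,j,1)}))$, and for each $D$ reindex by $A\coloneqq\smallsubsup{D}{(k,i)}{(k,j)}$ whenever $d_{ki}\geq 1$, using $a_{ki}+1=d_{ki}$. Alternatively, one can simply deduce \labelcref{lem.gen.rel.item.2} from \labelcref{lem.gen.rel.item.1} by transposing via \myCref{lem.trans.specht}[lem.trans.specht.item.2] and \Cref{rem.trans}, which converts the $\smallsup{\bar{\psi}}$-relation for $h$ into a $\smallsup{\bar{\phi}}$-relation for $h'$ and swaps rows with columns. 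There is no real obstacle; the only point needing care is the bookkeeping that shows the reindexed matrices lie in $\mathcal{T}_{\lambda}$ and that no spurious contributions are introduced by the constraint $a_{jl}\neq 0$ (respectively $a_{kj}\neq 0$) in \Cref{lem.compos}, which is handled automatically by the reindexing.
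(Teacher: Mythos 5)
Your proposal is correct and follows essentially the same route as the paper: expand $h\circ\smallsup{\bar{\phi}}[\lambda']{(i,j,1)}$ via \Cref{lem.compos}, reindex the pairs $(A,l)$ contributing $\rho[B]$ by $A=\smallsubsup{B}{(i,l)}{(j,l)}$ (so $a_{il}+1=b_{il}$), and conclude by linear independence of the $\rho[B]$; the paper likewise treats part \labelcref{lem.gen.rel.item.2} as the symmetric analogue. Your explicit verification of the reindexing bijection is just a spelled-out version of the paper's one-line change of summation variable.
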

\begin{proof}
  We shall only prove \partCref{lem.gen.rel.item.1} since \partCref{lem.gen.rel.item.2} is similar. By \Cref{lem.compos} we have:
    \begin{align*}
      h\circ\smallsup{\bar{\phi}}[\lambda']{(i,j,1)}=&\sum_{A\in\mathcal{T}_{\lambda}}h[A](\rho[A]\circ\smallsup{\bar{\phi}}[\lambda']{(i,j,1)})=\sum_{A\in\mathcal{T}_{\lambda}}h[A]\left(\sum_{l}(a_{il}+1)\rho\sq{\smallsubsup{A}{(j,l)}{(i,l)}}\right)\\
      =&\sum_{A\in\mathcal{T}_{\lambda}}\sum_{l}(a_{il}+1)h[A]\rho\sq{\smallsubsup{A}{(j,l)}{(i,l)}}=\sum_{B\in\Tab(\smallsup{\lambda'}{(i,j,1)},\lambda)}\left(\sum_{l}b_{il}h\sq{\smallsubsup{B}{(i,l)}{(j,l)}}\right)\rho[B].
    \end{align*}
  The result now follows from the linear independence of $\{\rho[B]\mid B\in\Tab(\smallsup{\lambda'}{(i,j,1)},\lambda)\}$.
\end{proof}

\begin{Definition}\label{def.first.rel}
  Let $\lambda\in\Lambda^{+}(n,r)$. We say that an element $h\in\Hom_{\k\mathfrak{S}_{r}}(M(\lambda'),M(\lambda))$ is \emph{relevant} if $h\circ\smallsup{\bar{\phi}}[\lambda']{(i,j,1)}=0$ and $\smallsup{\bar{\psi}}[\lambda]{(i,j,1)}\circ h=0$ for all $1\leq i<j\leq n$.
\end{Definition}

Denote by $\Rel_{\k\mathfrak{S}_{r}}(M(\lambda'),M(\lambda))$ the $\k$-subspace of \mbox{$\Hom_{\k\mathfrak{S}_{r}}(M(\lambda'),M(\lambda))$} consisting of the relevant homomorphisms $M(\lambda')\to M(\lambda)$. The following \namecref{rem.emb.first.rel} is clear:

\begin{Remark}\label{rem.emb.first.rel}
  Let $\lambda\in\Lambda^{+}(n,r)$. Note that there is a $\k$-embedding of the endomorphism algebra of $\Sp(\lambda)$ into the $\k$-space $\Rel_{\k\mathfrak{S}_{r}}(M(\lambda'),M(\lambda))$.
\end{Remark}

Now, by \Cref{lem.gen.rel}, we deduce the following \nameCref{cor.gen.rel}:

\begin{Corollary}\label{cor.gen.rel}
  Let $\lambda\in\Lambda^{+}(n,r)$ and $h\in\Hom_{\k\mathfrak{S}_{r}}(M(\lambda'),M(\lambda))$. Then we have that $h\in\Rel_{\k\mathfrak{S}_{r}}(M(\lambda'),M(\lambda))$ if and only if the coefficients $h[A]$ of the $\rho[A]$ in $h$ satisfy:
    \begin{enumerate}[label=(\roman*), font=\normalfont, ref=(\roman*)]
      \item\label{cor.gen.rel.item.1}For all $1\leq i<j\leq n$, $1\leq k\leq n$, and all $A\in\mathcal{T}_{\lambda}$ with $a_{jk}\neq 0$, we have:
        \begin{equation*}\label{eq.cor.gen.rel.R}
          (a_{ik}+1)h[A]=\sum_{l\neq k}a_{il}h\sq{\rowexop{A}{i}{j}{k}{l}}, \tag{$R_{i,j}^{k}(A)$}
        \end{equation*}
      \item\label{cor.gen.rel.item.2}For all $1\leq i<j\leq n$, $1\leq k\leq n$, and all $A\in\mathcal{T}_{\lambda}$ with $a_{kj}\neq 0$, we have:
        \begin{equation*}\label{eq.cor.gen.rel.C}
          (a_{ki}+1)h[A]=\sum_{l\neq k}a_{li}h\sq{\colexop{A}{i}{j}{k}{l}}. \tag{$C_{i,j}^{k}(A)$}
        \end{equation*}
    \end{enumerate}
\end{Corollary}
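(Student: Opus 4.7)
The plan is to deduce \Cref{cor.gen.rel} directly from \Cref{lem.gen.rel} by reparametrising the linear relations there so that they are indexed by matrices in $\mathcal{T}_{\lambda}$ (the natural indexing set of the coefficients $h[A]$) rather than by matrices in $\Tab(\smallsup{\lambda'}{(i,j,1)},\lambda)$ or $\Tab(\lambda',\smallsup{\lambda}{(i,j,1)})$. The key observation is that each $B$ appearing in \myCref{lem.gen.rel}[lem.gen.rel.item.1] differs from an element $A\in\mathcal{T}_{\lambda}$ by a single row move, and similarly for $D$, so each relation from the lemma can be rewritten as one relation attached to an element of $\mathcal{T}_{\lambda}$.

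To establish \labelcref{cor.gen.rel.item.1}, I fix $1\leq i<j\leq n$ and pair each $(A,k)$ with $A\in\mathcal{T}_{\lambda}$, $a_{jk}\neq 0$, to the matrix $B\in\Tab(\smallsup{\lambda'}{(i,j,1)},\lambda)$ obtained from $A$ by incrementing the $(i,k)$-entry and decrementing the $(j,k)$-entry, so that $A=\smallsubsup{B}{(i,k)}{(j,k)}$. I then isolate the $l=k$ summand in the lemma's relation $\sum_{l}b_{il}h\sq{\smallsubsup{B}{(i,l)}{(j,l)}}=0$: it contributes $(a_{ik}+1)h[A]$, whilst for $l\neq k$ one has $b_{il}=a_{il}$ and unpacking the definitions yields $\smallsubsup{B}{(i,l)}{(j,l)}=\rowexop{A}{i}{j}{k}{l}$. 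Rearranging and using $\Char\k=2$ to absorb the sign, the lemma's relation for $B$ becomes precisely $R_{i,j}^{k}(A)$. Conversely, every $B$ arises from such a pair, since its $i$th row-sum is $\lambda'_{i}+1\geq 1$ and hence some $b_{ik}\geq 1$, so the two families of relations coincide.

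\PartCref{cor.gen.rel.item.2} is handled by the entirely symmetric argument: one applies \myCref{lem.gen.rel}[lem.gen.rel.item.2], interchanges the roles of rows and columns using $D\in\Tab(\lambda',\smallsup{\lambda}{(i,j,1)})$ together with the parametrisation $A=\smallsubsup{D}{(k,i)}{(k,j)}$, and replaces $\rowexop{A}{i}{j}{k}{l}$ with $\colexop{A}{i}{j}{k}{l}$ throughout.

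I do not anticipate any conceptual obstacle; the main task is purely bookkeeping, namely carefully unwinding the compact notations $\smallsubsup{B}{(i,l)}{(j,l)}$, $\rowexop{A}{i}{j}{k}{l}$, and $\colexop{A}{i}{j}{k}{l}$ to confirm the matrix identifications on the nose, and verifying that the reparametrisation $(A,k)\leftrightarrow B$ (resp. $D$) covers every relation contributed by \Cref{lem.gen.rel}.
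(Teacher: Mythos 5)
Your proposal is correct and is exactly the argument the paper intends: the paper simply states that \Cref{cor.gen.rel} follows from \Cref{lem.gen.rel}, leaving implicit precisely the reindexing $(A,k)\leftrightarrow B$ (resp.\ $(A,k)\leftrightarrow D$) and the identification of $\smallsubsup{B}{(i,l)}{(j,l)}$ with $\rowexop{A}{i}{j}{k}{l}$ that you carry out. Your bookkeeping, including the isolation of the $l=k$ term giving $(a_{ik}+1)h[A]$ and the surjectivity of the correspondence onto all $B\in\Tab(\smallsup{\lambda'}{(i,j,1)},\lambda)$, is accurate.
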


\section{A Reduction Trick}\label{sec.trunc.trick}

\subsection{Flattening the partition}\label{subsec.reduc}

Now, we fix integers $a,b,m$ with $a\geq m\geq 2$, and we write $a'\coloneqq b+m-1$, $b'\coloneqq a-m+1$. We denote by $\lambda$ the partition \mbox{$(a,m-1,\dots,2,1^{b})$}, and we fix $r\coloneqq\deg(\lambda)$. Note that the transpose partition $\lambda'$ of $\lambda$ is given by \mbox{$\lambda'=(a',m-1,\dots,2,1^{b'})$}.

\bigskip

Recall that through the ABW-construction of the induced module, we see that $\nabla(\lambda)$ is isomorphic to a \mbox{$G$-quotient} of \mbox{$\Lambda^{\lambda'}E=\Lambda^{a'}E\otimes\Lambda^{m-1}E\otimes\cdots\otimes\Lambda^{2}E\otimes E^{\otimes b'}$}, namely by the submodule $\im\phi_{\lambda'}$ \labelcref{Bu.mapII}. We claim that we can replace the factor $E^{\otimes b'}$ with the symmetric power $S^{b'}E$. This process is in fact independent of the characteristic of the field $\k$. To this end, we construct from the multiplication map \mbox{$\mu:E^{\otimes b'}\to S^{b'}E$}, the surjective \mbox{$G$-homomorphism} \mbox{$1\otimes\mu:\Lambda^{\lambda'}E\to\Lambda^{a'}E\otimes\Lambda^{m-1}E\otimes\cdots\otimes\Lambda^{2}E\otimes S^{b'}E$}.

\begin{Lemma}\label{lem.red.Buch}
  For $m\geq 2$ and $\lambda=(a,m-1,m-2,\ldots,2,1^{b})$, we have:
    \begin{enumerate}[label=(\roman*), font=\normalfont, ref=(\roman*)]
      \item\label{lem.red.Buch.item.1}$\ker(1\otimes\mu)=\sum_{k=1}^{b'-1}\im\smallsup{\phi}[\lambda']{(m+k-1,m+k,1)}\subseteq\im\phi_{\lambda'}$.
      \item\label{lem.red.Buch.item.2}$\nabla(\lambda)\cong\coker((1\otimes\mu)\circ\phi_{\lambda'})$ as $G$-modules.
    \end{enumerate}
\end{Lemma}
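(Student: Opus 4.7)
The plan is to prove (i) by identifying the image of each $\smallsup{\phi}[\lambda']{(m+k-1,m+k,1)}$ explicitly, and then deduce (ii) from (i) by a routine cokernel argument.

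For (i), observe that since $\lambda'=(a',m-1,\ldots,2,1^{b'})$, we have $\lambda'_{m+k-1}=\lambda'_{m+k}=1$ for every $1\leq k\leq b'-1$. Unwinding \labelcref{General.Buch.I} for the triple $(i,j,s)=(m+k-1,m+k,1)$, the shuffle $\sigma$ is trivial (since $j=i+1$) and the composite acts as the identity on every tensor factor except positions $m+k-1$ and $m+k$, where it reduces (after absorbing the factor $\Lambda^{0}E=\k$) to the comultiplication $\Delta\colon\Lambda^{2}E\to E\otimes E$. Combined with the classical Hopf-algebraic identity $\im(\Delta\colon\Lambda^{2}E\to E\otimes E)=\ker(\mu_{2}\colon E\otimes E\to S^{2}E)$, this gives
\[
\im\smallsup{\phi}[\lambda']{(m+k-1,m+k,1)}=\Lambda^{a'}E\otimes\Lambda^{m-1}E\otimes\cdots\otimes\Lambda^{2}E\otimes E^{\otimes(k-1)}\otimes\ker\mu_{2}\otimes E^{\otimes(b'-k-1)}.
\]
Summing over $k$ and using the standard description of $\ker(\mu\colon E^{\otimes b'}\to S^{b'}E)$ as the sum of its adjacent-swap subspaces (reflecting that $\mathfrak{S}_{b'}$ is generated by adjacent transpositions) then yields the first equality in (i). The inclusion in $\im\phi_{\lambda'}$ is immediate from \myCref{map.images}[map.images.item.1].

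For (ii), part (i) yields $\ker(1\otimes\mu)\subseteq\im\phi_{\lambda'}$. The surjection $1\otimes\mu$ then induces a canonical surjection $\coker\phi_{\lambda'}\twoheadrightarrow\coker((1\otimes\mu)\circ\phi_{\lambda'})$, which is actually an isomorphism by a routine diagram chase: if $x\in\Lambda^{\lambda'}E$ satisfies $(1\otimes\mu)(x)\in(1\otimes\mu)(\im\phi_{\lambda'})$, then $x$ differs from an element of $\im\phi_{\lambda'}$ by an element of $\ker(1\otimes\mu)\subseteq\im\phi_{\lambda'}$, whence $x\in\im\phi_{\lambda'}$. Combined with the identification $\coker\phi_{\lambda'}\cong\nabla(\lambda)$ from the ABW-construction, this gives $\nabla(\lambda)\cong\coker((1\otimes\mu)\circ\phi_{\lambda'})$.

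The main obstacle is the explicit verification of the action of $\smallsup{\phi}[\lambda']{(m+k-1,m+k,1)}$ at the relevant pair of tensor factors; once this calculation is in place, the rest of the argument is formal bookkeeping around well-known Hopf-algebraic facts and standard cokernel manipulations.
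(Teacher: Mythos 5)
Your argument is correct and follows essentially the same route as the paper's proof: both identify the map $\smallsup{\phi}[\lambda']{(m+k-1,m+k,1)}$ at the two relevant tensor positions with the comultiplication $\Lambda^{2}E\to E\otimes E$, whose image is the $k$th adjacent-swap subspace, then use the standard generation of $\ker\mu$ by adjacent-swap differences and conclude (ii) by the same cokernel comparison. The only cosmetic difference is that you compute the images as subspaces while the paper exhibits explicit preimages of the generators $x\otimes e_{\bmath{i}}^{[k]}$.
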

\begin{proof}
  \labelcref{lem.red.Buch.item.1} Firstly, that $\im\smallsup{\phi}[\lambda']{(m+k-1,m+k,1)}\subseteq\im\phi_{\lambda'}$ for $1\leq k<b'$ follows from the definition of $\phi_{\lambda'}$. Then, note that by the definition of the symmetric power $S^{b'}E$, the $\k$-space $\ker\mu$ is generated by elements of the form $e_{\bmath{i}}^{[k]}$ for $1\leq k<b'$ and sequences $\bmath{i}\coloneqq(i_{1},\ldots,i_{b'})$ with terms in $[n]$, where $e_{\bmath{i}}^{[k]}\coloneqq(e_{i_{1}}\otimes\cdots\otimes e_{i_{k}}\otimes e_{i_{k+1}}\otimes\cdots\otimes e_{i_{b'}})-(e_{i_{1}}\otimes\cdots\otimes e_{i_{k+1}}\otimes e_{i_{k}}\otimes\cdots\otimes e_{i_{b'}})$. Then, it follows that the $\k$-space $\ker(1\otimes\mu)$ is generated by elements of the form $x\otimes e_{\bmath{i}}^{[k]}$ for $x\in\Lambda^{a'}E\otimes\Lambda^{m-1}E\otimes\cdots\otimes\Lambda^{2}E$, and such $k$ and $\bmath{i}$. But given such $x$, $k$ and $\bmath{i}$, the image of the element $x\otimes e_{i_{1}}\otimes\cdots\otimes(e_{i_{k}}\wedge e_{i_{k+1}})\otimes\cdots\otimes e_{i_{b'}}$ under $\smallsup{\phi}[\lambda']{(m+k-1,m+k,1)}$ is precisely $x\otimes e_{\bmath{i}}^{[k]}$, and so $x\otimes e_{\bmath{i}}^{[k]}\in\im\smallsup{\phi}[\lambda']{(m+k-1,m+k,1)}\subseteq\im\phi_{\lambda'}$, from which \partCref{lem.red.Buch.item.1} follows.

  \labelcref{lem.red.Buch.item.2} Now, the map \mbox{$1\otimes\mu:\Lambda^{\lambda'}E\to\Lambda^{a'}E\otimes\Lambda^{m-1}E\otimes\cdots\otimes\Lambda^{2}E\otimes S^{b'}E$} induces a surjective $G$-homomorphism:
    \[
      \pi:\frac{\Lambda^{\lambda'}E}{\ker(1\otimes\mu)}\to\frac{\Lambda^{a'}E\otimes\Lambda^{m-1}E\otimes\cdots\otimes\Lambda^{2}E\otimes S^{b'}E}{\im((1\otimes\mu)\circ\phi_{\lambda'})}\ .
    \]
  Moreover, it follows from \partCref{lem.red.Buch.item.1} that $\ker\pi=\im\phi_{\lambda'}/\ker(1\otimes\mu)$, and so we deduce that $\nabla(\lambda)\cong\coker((1\otimes\mu)\circ\phi_{\lambda'})$.
\end{proof}

On the other hand, recall that through the James-construction of the induced module, we see that that $\nabla(\lambda)$ is isomorphic to a submodule of $S^{\lambda}E$, namely as the kernel of the \mbox{$G$-homomorphism} $\psi_{\lambda}$ \labelcref{Ja.mapII}. We claim that we may replace the factor $E^{\otimes b}$ with the exterior power $\Lambda^{b}E$. Once again, this process is independent of the characteristic of $\k$. For this, we construct from the comultiplication map $\Delta:\Lambda^{b}E\rightarrow E^{\otimes b}$, the injective \mbox{$G$-homomorphism} \mbox{$1\otimes\Delta:S^{a}E\otimes S^{m-1}E\otimes\cdots\otimes S^{2}E\otimes\Lambda^{b}E\to S^{\lambda}E$}.

\begin{Lemma}\label{lem.red.Jam}
  For $m\geq 2$ and $\lambda=(a,m-1,m-2,\ldots,2,1^{b})$, we have:
    \begin{enumerate}[label=(\roman*), font=\normalfont, ref=(\roman*)]
      \item\label{lem.red.Jam.item.1}$\ker\psi_{\lambda}\subseteq\bigcap_{k=1}^{b-1}\ker\smallsup{\psi}[\lambda]{(m+k-1,m+k,1)}=\im(1\otimes\Delta)$.
      \item\label{lem.red.Jam.item.2}$\nabla(\lambda)\cong\ker(\psi_{\lambda}\circ(1\otimes\Delta))$ as $G$-modules.
    \end{enumerate}
\end{Lemma}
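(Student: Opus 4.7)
The plan is to mirror the proof of \Cref{lem.red.Buch}, exchanging the role played by the multiplication map on $E^{\otimes b'}$ there with that of the comultiplication $\Delta$ on $\Lambda^{b}E$ here. For \partCref{lem.red.Jam.item.1}, the first inclusion $\ker\psi_{\lambda}\subseteq\bigcap_{k=1}^{b-1}\ker\smallsup{\psi}[\lambda]{(m+k-1,m+k,1)}$ is immediate from \myCref{map.images}[map.images.item.2]. For the equality $\bigcap_{k=1}^{b-1}\ker\smallsup{\psi}[\lambda]{(m+k-1,m+k,1)}=\im(1\otimes\Delta)$, the key observation is that, since $\lambda_{m+k-1}=\lambda_{m+k}=1$, the map $\smallsup{\psi}[\lambda]{(m+k-1,m+k,1)}$ on $S^{\lambda}E=S^{a}E\otimes\cdots\otimes S^{2}E\otimes E^{\otimes b}$ acts as the identity on all but two tensor positions, where it performs the multiplication $\mu:E\otimes E\to S^{2}E$ on the $k$-th and $(k+1)$-th factors of the $E^{\otimes b}$ part.

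The inclusion $\supseteq$ then follows from coassociativity of the Hopf algebra $\Lambda(E)$, which factors $\Delta:\Lambda^{b}E\to E^{\otimes b}$ through $E^{\otimes(k-1)}\otimes\Lambda^{2}E\otimes E^{\otimes(b-k-1)}$, together with the vanishing of $\mu\circ\Delta$ on $\Lambda^{2}E$. The inclusion $\subseteq$, which I expect to be the main technical point, amounts to the claim that an element of $E^{\otimes b}$ killed by the multiplication on every pair of adjacent factors must already lie in $\im(\Delta:\Lambda^{b}E\to E^{\otimes b})$. I would verify this on the standard basis $\{e_{i_{1}}\otimes\cdots\otimes e_{i_{b}}\}$: writing $w=\sum c_{I}\,e_{i_{1}}\otimes\cdots\otimes e_{i_{b}}$, the vanishing of each adjacent multiplication forces $c_{I}+c_{I'}=0$ whenever $I'$ differs from $I$ by an adjacent transposition, and also $c_{I}=0$ whenever $I$ has equal entries in adjacent positions. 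Since adjacent transpositions generate $\mathfrak{S}_{b}$, these relations propagate to all transpositions and to all repeated entries, yielding exactly the defining relations for elements of $\im(\Delta:\Lambda^{b}E\to E^{\otimes b})$.

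For \partCref{lem.red.Jam.item.2}, the map $1\otimes\Delta$ is injective because $\Delta$ is, and by \partCref{lem.red.Jam.item.1} we have $\ker\psi_{\lambda}\subseteq\im(1\otimes\Delta)$. Consequently, the restriction of $1\otimes\Delta$ to the subspace $(1\otimes\Delta)^{-1}(\ker\psi_{\lambda})=\ker(\psi_{\lambda}\circ(1\otimes\Delta))$ defines a $G$-isomorphism onto $\ker\psi_{\lambda}\cong\nabla(\lambda)$, as required.
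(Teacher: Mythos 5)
Your proposal is correct and follows essentially the same route as the paper: the first inclusion comes from the fact that the $\smallsup{\psi}[\lambda]{(m+k-1,m+k,1)}$ are among the components of $\psi_{\lambda}$, the equality is the identification of the joint kernel of the adjacent multiplications $E\otimes E\to S^{2}E$ with the fully antisymmetrized tensors, i.e.\ with $\im(1\otimes\Delta)$, and \partCref{lem.red.Jam.item.2} follows by restricting the injection $1\otimes\Delta$. Your explicit coefficient verification of the inclusion $\subseteq$ (propagating the adjacent-transposition relations to all of $\mathfrak{S}_{b}$) only spells out a step the paper leaves implicit.
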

\begin{proof}
  \labelcref{lem.red.Jam.item.1} Firstly, it follows from the definition of $\psi_{\lambda}$ that $\ker\psi_{\lambda}\subseteq\ker\smallsup{\psi}[\lambda]{(m+k-1,m+k,1)}$ for $1\leq k<b$. Then, the \mbox{$\k$-space} $\ker\smallsup{\psi}[\lambda]{(m+k-1,m+k,1)}$ is generated by elements of the form $x\otimes e_{\bmath{i}}^{[k]}$ for $x\in S^{a}E\otimes S^{m-1}E\otimes\cdots\otimes S^{2}E$, $1\leq k<b$, and sequences $\bmath{i}\coloneqq(i_{1},\ldots,i_{b})$ with terms in $[n]$, where $e_{\bmath{i}}^{[k]}\coloneqq(e_{i_{1}}\otimes\cdots\otimes e_{i_{k}}\otimes e_{i_{k+1}}\otimes\cdots\otimes e_{i_{b}})-(e_{i_{1}}\otimes\cdots\otimes e_{i_{k+1}}\otimes e_{i_{k}}\otimes\cdots\otimes e_{i_{b}})$. It follows that the $\k$-space $\bigcap_{k=1}^{b-1}\ker\smallsup{\psi}[\lambda]{(m+k-1,m+k,1)}$ is generated by elements of the form:
    \[
      \sum_{\sigma\in\mathfrak{S}_{b}}\sgn(\sigma)\left(x\otimes e_{i_{\sigma(1)}}\otimes\cdots\otimes e_{i_{\sigma(b)}}\right)=x\otimes\Delta(e_{i_{1}}\wedge\cdots\wedge e_{i_{b}})\in\im(1\otimes\Delta),
    \]
  from which \partCref{lem.red.Jam.item.1} follows.

  \labelcref{lem.red.Jam.item.2} Now, the map $1\otimes\Delta:S^{a}E\otimes S^{m-1}E\otimes\cdots\otimes S^{2}E\otimes\Lambda^{b}E\to S^{\lambda}E$ induces an injective $G$-homomorphism $\nu:\ker(\psi_{\lambda}\circ(1\otimes\Delta))\to\ker\psi_{\lambda}$. Moreover, it follows from \partCref{lem.red.Jam.item.1} that $\nu$ is surjective, and so we have a $G$-isomorphism $\ker(\psi_{\lambda}\circ(1\otimes\Delta))\cong\ker{\psi_{\lambda}}\cong\nabla(\lambda)$.
\end{proof}

Now, we shall return to the situation where the underlying field $\k$ has characteristic $2$. We fix the sequences $\alpha\coloneqq(a',m-1,\ldots,2,b')$ and $\beta\coloneqq(a,m-1,\ldots,2,b)$.

\begin{Remark}\label{rem.ident}
  We shall consider the constructions of this section from the perspective of the Specht module $\Sp(\lambda)$.
    \begin{enumerate}[label=(\roman*), font=\normalfont, ref=(\roman*)]
      \item\label{rem:ident.quo} By \myCref{lem.red.Buch}[lem.red.Buch.item.2] we have that $\nabla(\lambda)\cong\coker((1\otimes\mu)\circ\phi_{\lambda'})$. By applying the Schur functor $f$, we obtain that $\Sp(\lambda)\cong\coker(f(1\otimes\mu)\circ\bar{\phi}_{\lambda'})$. Now, since we are in characteristic $2$, we have that \mbox{$f(\Lambda^{a'}E\otimes\Lambda^{m-1}E\otimes\cdots\otimes\Lambda^{2}E\otimes S^{b'}E)$} is identified with \mbox{$f(S^{a'}E\otimes S^{m-1}E\otimes\cdots\otimes S^{2}E\otimes S^{b'}E)$} which in turn is isomorphic to $M(\alpha)$. We write $\pi_{\alpha}:M(\lambda')\to M(\alpha)$ for the surjective \mbox{$\k\mathfrak{S}_{r}$-homomorphism} that is obtained from $f(1\otimes\mu)$ under these identifications. We set $\bar{\phi}_{\alpha}\coloneqq\pi_{\alpha}\circ\bar{\phi}_{\lambda'}$ and we deduce that $\Sp(\lambda)\cong\coker\bar{\phi}_{\alpha}$.
      \item\label{rem:ident.sub} On the other hand, by \myCref{lem.red.Jam}[lem.red.Jam.item.2] we have that $\nabla(\lambda)\cong\ker(\psi_{\lambda}\circ(1\otimes\Delta))$. By applying the Schur functor $f$, we deduce that $\Sp(\lambda)\cong\ker(\bar{\psi}_{\lambda}\circ f(1\otimes\Delta))$. But once again, since we are in characteristic $2$, \mbox{$f(S^{a}E\otimes S^{m-1}E\otimes\cdots\otimes S^{2}E\otimes\Lambda^{b}E)$} is identified with \mbox{$f(S^{a}E\otimes S^{m-1}E\otimes\cdots\otimes S^{2}E\otimes S^{b}E)$} which in turn is isomorphic to $M(\beta)$. We write $\iota_{\beta}:M(\beta)\to M(\lambda)$ for the injective \mbox{$\k\mathfrak{S}_{r}$-homomorphism} that is obtained from $f(1\otimes\Delta)$ under these identifications. We set $\bar{\psi}_{\beta}\coloneqq\bar{\psi}_{\lambda}\circ\iota_{\beta}$ and we deduce that $\Sp(\lambda)\cong\ker\bar{\psi}_{\beta}$.
    \end{enumerate}
\end{Remark}

We summarise the content of \Cref{rem.ident} in the following \namecref{lem.red.Sp}:

\begin{Lemma}\label{lem.red.Sp}
  For $m\geq 2$ and $\lambda=(a,m-1,m-2,\ldots,2,1^{b})$, we have:
    \begin{enumerate}[label=(\roman*), font=\normalfont, ref=(\roman*)]
      \item\label{lem.red.Sp.item.1}$\Sp(\lambda)\cong\coker\bar{\phi}_{\alpha}$ as \mbox{$\k\mathfrak{S}_{r}$-modules}.
      \item\label{lem.red.Sp.item.2}$\Sp(\lambda)\cong\ker\bar{\psi}_{\beta}$ as \mbox{$\k\mathfrak{S}_{r}$-modules}.
    \end{enumerate}
\end{Lemma}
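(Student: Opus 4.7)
The plan is to deduce both parts directly by applying the Schur functor $f$ to the $G$-isomorphisms of \Cref{lem.red.Buch} and \Cref{lem.red.Jam}, then invoke the specific features of characteristic $2$ to identify the resulting $\k\mathfrak{S}_{r}$-modules with the permutation modules $M(\alpha)$ and $M(\beta)$. The key tools are (a) exactness of $f$, which preserves both kernels and cokernels, (b) the isomorphism $f\nabla(\lambda)\cong\Sp(\lambda)$ from \secCref{subsec.Schur}, and (c) the fact that in characteristic $2$, the signed permutation module and the (unsigned) permutation module coincide, so that $f\Lambda^{k}E\cong fS^{k}E\cong M((k))$ for any $k$.

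For \partCref{lem.red.Sp.item.1}, I would start from \myCref{lem.red.Buch}[lem.red.Buch.item.2], which says $\nabla(\lambda)\cong\coker((1\otimes\mu)\circ\phi_{\lambda'})$. Applying $f$ and using right-exactness yields $\Sp(\lambda)\cong\coker(f(1\otimes\mu)\circ f(\phi_{\lambda'}))=\coker(f(1\otimes\mu)\circ\bar{\phi}_{\lambda'})$. The source of $1\otimes\mu$ is $\Lambda^{\lambda'}E$, with image $f\Lambda^{\lambda'}E\cong M_{s}(\lambda')=M(\lambda')$ in characteristic $2$, while its target is $\Lambda^{a'}E\otimes\Lambda^{m-1}E\otimes\cdots\otimes\Lambda^{2}E\otimes S^{b'}E$, which under $f$ is identified with $M(\alpha)$, again by the characteristic $2$ coincidence of exterior and symmetric powers at the level of the Schur functor. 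Under these identifications, $f(1\otimes\mu)$ becomes exactly the map $\pi_{\alpha}:M(\lambda')\to M(\alpha)$ of \myCref{rem.ident}[rem:ident.quo], and hence the composite $f(1\otimes\mu)\circ\bar{\phi}_{\lambda'}$ is $\bar{\phi}_{\alpha}=\pi_{\alpha}\circ\bar{\phi}_{\lambda'}$. This yields $\Sp(\lambda)\cong\coker\bar{\phi}_{\alpha}$.

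For \partCref{lem.red.Sp.item.2}, I would apply the dual argument starting from \myCref{lem.red.Jam}[lem.red.Jam.item.2], namely $\nabla(\lambda)\cong\ker(\psi_{\lambda}\circ(1\otimes\Delta))$. Since $f$ is exact, it commutes with taking kernels, giving $\Sp(\lambda)\cong\ker(\bar{\psi}_{\lambda}\circ f(1\otimes\Delta))$. The source $S^{a}E\otimes\cdots\otimes S^{2}E\otimes\Lambda^{b}E$ of $1\otimes\Delta$ maps under $f$ to the tensor product $M((a))\otimes\cdots\otimes M((2))\otimes M_{s}((b))$, which in characteristic $2$ is identified with $M(\beta)$, while its target $S^{\lambda}E$ gives $f S^{\lambda}E\cong M(\lambda)$. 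Under these identifications, $f(1\otimes\Delta)$ becomes the injective map $\iota_{\beta}:M(\beta)\to M(\lambda)$ of \myCref{rem.ident}[rem:ident.sub], and hence $\bar{\psi}_{\lambda}\circ f(1\otimes\Delta)=\bar{\psi}_{\beta}$. Therefore $\Sp(\lambda)\cong\ker\bar{\psi}_{\beta}$.

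No serious obstacle should arise here since the lemma is essentially a bookkeeping statement that packages together the two characteristic $2$ identifications already spelled out in \Cref{rem.ident}. The only subtle point to verify is that the natural identifications of $f\Lambda^{k}E$ with $f S^{k}E$ in characteristic $2$ are compatible with the comultiplication and multiplication maps, so that $f(1\otimes\mu)$ and $f(1\otimes\Delta)$ really do correspond to well-defined maps $\pi_{\alpha}$ and $\iota_{\beta}$ between the permutation modules; this compatibility follows from the explicit description of $f$ on basis elements and the fact that the Hopf algebra structures on $S(E)$ and $\Lambda(E)$ degenerate to the same combinatorial rule on the weight spaces $(\cdot)^{(1^{r})}$ in characteristic $2$.
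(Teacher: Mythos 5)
Your argument is exactly the paper's: the lemma is stated as a summary of \Cref{rem.ident}, which applies the Schur functor to \myCref{lem.red.Buch}[lem.red.Buch.item.2] and \myCref{lem.red.Jam}[lem.red.Jam.item.2], uses exactness of $f$, and invokes the characteristic $2$ identification of signed and unsigned permutation modules to produce $\pi_{\alpha}$, $\iota_{\beta}$, and hence $\bar{\phi}_{\alpha}$, $\bar{\psi}_{\beta}$. The proposal is correct and matches the paper's route.
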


We define the following \mbox{$\k\mathfrak{S}_{r}$-homomorphisms}:
  \[
    \smallsup{\bar{\phi}}[\alpha]{(i,j,s)}\coloneqq\pi_{\alpha}\circ\smallsup{\bar{\phi}}[\lambda']{(i,j,s)}:M(\smallsup{\lambda'}{(i,j,s)})\to M(\alpha),\quad\smallsup{\bar{\psi}}[\beta]{(i,j,t)}\coloneqq\smallsup{\bar{\psi}}[\lambda]{(i,j,t)}\circ\iota_{\beta}:M(\beta)\to M(\smallsup{\lambda}{(i,j,t)}),
  \]
where $\pi_{\alpha}$ and $\iota_{\beta}$ are as defined in \Cref{rem.ident}. The following properties of these homomorphisms may be easily verified:

\begin{Lemma}\label{lem.vanish}
  For $m\geq 2$ and $\lambda=(a,m-1,m-2,\ldots,2,1^{b})$, we have:
    \begin{enumerate}[label=(\roman*), font=\normalfont, ref=(\roman*)]
      \item\label{lem.vanish.item.1}$\smallsup{\bar{\phi}}[\alpha]{(i,j,s)}=0$ for $m\leq i<j\leq n$, $1\leq s\leq\lambda'_{j}$.
      \item\label{lem.vanish.item.2}$\smallsup{\bar{\psi}}[\beta]{(i,j,t)}=0$ for $m\leq i<j\leq n$, $1\leq t\leq\lambda_{j}$.
      \item\label{lem.vanish.item.3}$\bar{\phi}_{\alpha}=\sum_{i=1}^{m-1}\sum_{s=1}^{\smash[t]{\lambda'_{i+1}}}\smallsup{\bar{\phi}}[\alpha]{(i,i+1,s)}$.
      \item\label{lem.vanish.item.4}$\bar{\psi}_{\beta}=\sum_{i=1}^{m-1}\sum_{t=1}^{\lambda_{i+1}}\smallsup{\bar{\psi}}[\beta]{(i,i+1,t)}$.
    \end{enumerate}
\end{Lemma}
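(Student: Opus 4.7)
The plan is to verify parts (i) and (ii) by direct computation at the level of polynomial $G$-modules, and then to extract parts (iii) and (iv) as immediate corollaries. The guiding principle is that in characteristic $2$, composing an antisymmetrising map (comultiplication in the exterior algebra or, dually, in the symmetric algebra) with a symmetrising map yields cancellations in matched pairs of terms, which is exactly the structure of $\phi_{\lambda'}^{(i,j,1)}$ followed by $1\otimes\mu$, and its mirror $\psi_{\lambda}^{(i,j,1)}$ preceded by $1\otimes\Delta$.

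For part (i), I would first observe that $\bar{\phi}_{\alpha}^{(i,j,s)}$ is the Schur functor applied to $(1\otimes\mu)\circ\phi_{\lambda'}^{(i,j,s)}$, so it suffices to show the latter vanishes at the $G$-module level. For $m\leq i<j$, one needs $j\leq m+b'-1$ (else $\lambda'_{j}=0$ and the claim is vacuous), in which case $\lambda'_{i}=\lambda'_{j}=1$ and so $s=1$. Unwinding \labelcref{General.Buch.I}, $\phi_{\lambda'}^{(i,j,1)}$ applied to a simple tensor with $e_{p}\wedge e_{q}$ at position $i$ and $1\in\Lambda^{0}E$ at position $j$ gives the sum of two tensors differing by a swap of $e_{p}$ and $e_{q}$ between positions $i$ and $j$ of $\Lambda^{\lambda'}E$. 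Both positions lie inside the final block of $b'$ exterior factors that $1\otimes\mu$ collapses into the single factor $S^{b'}E$, so commutativity of multiplication in $S^{b'}E$ identifies the two terms, and they cancel in characteristic $2$. Part (ii) is proved by the mirror argument: $\bar{\psi}_{\beta}^{(i,j,t)}$ is the Schur-functor image of $\psi_{\lambda}^{(i,j,t)}\circ(1\otimes\Delta)$; only the case $m\leq i<j\leq m+b-1$ with $t=1$ is non-vacuous; the image of $1\otimes\Delta$ is antisymmetric in the last $b$ tensor factors of $E^{\otimes b}$; and $\psi_{\lambda}^{(i,j,1)}$ symmetrically multiplies the entries at positions $i,j$ into $S^{2}E$. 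Pairing terms by the transposition $(i,j)$ then gives pairs with identical image, which cancel modulo $2$.

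Parts (iii) and (iv) are then routine consequences. By \labelcref{Bu.mapIII}, $\bar{\phi}_{\alpha}=\pi_{\alpha}\circ\bar{\phi}_{\lambda'}$ is the sum of $\bar{\phi}_{\alpha}^{(i,i+1,s)}$ over $1\leq i\leq\ell(\lambda')-1=m+b'-2$, $1\leq s\leq\lambda'_{i+1}$; part (i) kills the contributions with $i\geq m$, leaving precisely the sum claimed in (iii). The argument for (iv) is symmetric, using $\bar{\psi}_{\beta}=\bar{\psi}_{\lambda}\circ\iota_{\beta}$, \labelcref{Ja.mapIII}, and part (ii). The only genuinely delicate step in the plan is the unwinding of $\phi_{\lambda'}^{(i,j,1)}$ and $\psi_{\lambda}^{(i,j,1)}$ for non-adjacent $i,j$, which requires tracking the shuffle permutation $\sigma$ in \labelcref{General.Buch.I} (and its dual in \labelcref{General.Jam.I}); however, since in each case the two endpoints $i,j$ end up inside the same collapsing block, the effect of $\sigma$ disappears and no sign subtleties arise in characteristic $2$.
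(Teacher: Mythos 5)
Your argument is correct and is essentially the verification the paper leaves implicit: the key cancellation you describe is exactly the computation already carried out in the proofs of Lemmas \ref{lem.red.Buch}(i) and \ref{lem.red.Jam}(i), namely that $\im\smallsup{\phi}[\lambda']{(i,j,1)}\subseteq\ker(1\otimes\mu)$ and $\im(1\otimes\Delta)\subseteq\ker\smallsup{\psi}[\lambda]{(i,j,1)}$ for $m\leq i<j$, from which (i)--(ii) follow by applying $f$, and (iii)--(iv) by restricting the defining sums \labelcref{Bu.mapIII} and \labelcref{Ja.mapIII}. The only cosmetic remark is that the cancellation is really sign-based and holds in any characteristic (as the paper notes for the reduction itself), so the appeal to characteristic $2$ is a harmless but unnecessary crutch.
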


Now, the following \nameCref{lem.flat.map.images} provides an analogue of \Cref{map.images}:

\begin{Lemma}\label{lem.flat.map.images}
  For $m\geq 2$ and $\lambda=(a,m-1,m-2,\ldots,2,1^{b})$, we have:
    \begin{enumerate}[label=(\roman*), font=\normalfont, ref=(\roman*)]
      \item\label{lem.flat.map.images.item.1}$\im\smallsup{\bar{\phi}}[\alpha]{(i,j,s)}\subseteq\im\bar{\phi}_{\alpha}$ for $1\leq i<j\leq m$, $1\leq s\leq\lambda'_{j}$.
      \item\label{lem.flat.map.images.item.2}$\ker\bar{\psi}_{\beta}\subseteq\ker\smallsup{\bar{\psi}}[\beta]{(i,j,t)}$ for $1\leq i<j\leq m$, $1\leq t\leq\lambda_{j}$.
    \end{enumerate}
\end{Lemma}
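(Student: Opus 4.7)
The plan is to transport \Cref{map.images} from the $\lambda/\lambda'$ setting to the flattened $\alpha/\beta$ setting, using the surjection $\pi_\alpha$ and the injection $\iota_\beta$ from \Cref{rem.ident} together with exactness of the Schur functor $f$. The substantive work has already been done: \Cref{map.images} (essentially ABW) supplies the analogous inclusions for $\phi_{\lambda'}$ and $\psi_{\lambda}$, while \Cref{lem.red.Buch}, \Cref{lem.red.Jam}, and \Cref{rem.ident} arrange that $\bar{\phi}_\alpha$ and $\bar{\psi}_\beta$ are obtained from $\bar{\phi}_{\lambda'}$ and $\bar{\psi}_{\lambda}$ by post- or pre-composing with $\pi_\alpha$ or $\iota_\beta$, and similarly for the partial maps $\smallsup{\bar{\phi}}[\alpha]{(i,j,s)}$ and $\smallsup{\bar{\psi}}[\beta]{(i,j,t)}$.

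For \partCref{lem.flat.map.images.item.1}, I would apply \Cref{map.images}\labelcref{map.images.item.1} to the transpose partition $\lambda'$, obtaining $\im \smallsup{\phi}[\lambda']{(i,j,s)} \subseteq \im \phi_{\lambda'}$. Since $f$ is exact, it preserves images, giving $\im \smallsup{\bar{\phi}}[\lambda']{(i,j,s)} \subseteq \im \bar{\phi}_{\lambda'}$ inside $M(\lambda')$. Post-composing with the surjective $\k\mathfrak{S}_r$-homomorphism $\pi_\alpha : M(\lambda') \to M(\alpha)$ and using the defining identities $\smallsup{\bar{\phi}}[\alpha]{(i,j,s)} = \pi_\alpha \circ \smallsup{\bar{\phi}}[\lambda']{(i,j,s)}$ and $\bar{\phi}_\alpha = \pi_\alpha \circ \bar{\phi}_{\lambda'}$, the claim follows.

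For \partCref{lem.flat.map.images.item.2}, applying \Cref{map.images}\labelcref{map.images.item.2} to $\lambda$ and the exactness of $f$ gives $\ker \bar{\psi}_\lambda \subseteq \ker \smallsup{\bar{\psi}}[\lambda]{(i,j,t)}$. Given $x \in \ker \bar{\psi}_\beta$, the identity $\bar{\psi}_\beta = \bar{\psi}_\lambda \circ \iota_\beta$ forces $\iota_\beta(x) \in \ker \bar{\psi}_\lambda \subseteq \ker \smallsup{\bar{\psi}}[\lambda]{(i,j,t)}$; the identity $\smallsup{\bar{\psi}}[\beta]{(i,j,t)} = \smallsup{\bar{\psi}}[\lambda]{(i,j,t)} \circ \iota_\beta$ then yields $\smallsup{\bar{\psi}}[\beta]{(i,j,t)}(x) = 0$, so $x \in \ker \smallsup{\bar{\psi}}[\beta]{(i,j,t)}$, as desired.

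I anticipate no genuine obstacle; the proof is essentially a bookkeeping exercise stitching together \Cref{map.images} with the factorisations $\bar{\phi}_\alpha = \pi_\alpha \circ \bar{\phi}_{\lambda'}$ and $\bar{\psi}_\beta = \bar{\psi}_\lambda \circ \iota_\beta$. The restriction $j \leq m$ is harmless: the argument above works for any admissible $i < j$, and for $i \geq m$ the relevant maps vanish by \Cref{lem.vanish}, so the inclusion becomes trivial in that range.
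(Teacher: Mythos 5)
Your proof is correct and follows essentially the same route as the paper: both parts are obtained by applying the exact Schur functor to \Cref{map.images} and then post-composing with $\pi_{\alpha}$ (for the images) or pre-composing with $\iota_{\beta}$ (for the kernels), using the defining factorisations $\smallsup{\bar{\phi}}[\alpha]{(i,j,s)}=\pi_{\alpha}\circ\smallsup{\bar{\phi}}[\lambda']{(i,j,s)}$ and $\smallsup{\bar{\psi}}[\beta]{(i,j,t)}=\smallsup{\bar{\psi}}[\lambda]{(i,j,t)}\circ\iota_{\beta}$. You merely spell out the elementary image/kernel chase that the paper leaves implicit.
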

\begin{proof}
  Firstly, recall the notation, and in particular \mbox{$\k\mathfrak{S}_{r}$-homomorphisms} $\pi_{\alpha}$ and $\iota_{\beta}$, defined within \Cref{rem.ident}. Then, \partCref{lem.flat.map.images.item.1} follows from \myCref{map.images}[map.images.item.1] by applying the Schur functor and post-composing by $\pi_{\alpha}$. Similarly, we see that \partCref{lem.flat.map.images.item.2} follows from \myCref{map.images}[map.images.item.2] by applying the Schur functor and \mbox{pre-composing} by $\iota_{\beta}$.
\end{proof}

Then, by combining the results of \Cref{lem.red.Sp}, \Cref{lem.vanish}, and \Cref{lem.flat.map.images}, we obtain the following description of the endomorphism algebra of $\Sp(\lambda)$:

\begin{Corollary}\label{cor.end.flat}
  The endomorphism algebra of $\Sp(\lambda)$ may be identified with the \mbox{$\k$-subspace} of $\Hom_{\k\mathfrak{S}_{r}}(M(\alpha),M(\beta))$ consisting of those elements $h$ that satisfy:
    \begin{enumerate}[label=(\roman*), font=\normalfont, ref=(\roman*)]
      \item\label{cor.end.flat.item.1}$h\circ\smallsup{\bar{\phi}}[\alpha]{(i,j,s)}=0$ for $1\leq i<j\leq m$ and $1\leq s\leq\lambda'_{j}$,
      \item\label{cor.end.flat.item.2}$\smallsup{\bar{\psi}}[\beta]{(i,j,t)}\circ h=0$ for $1\leq i<j\leq m$ and $1\leq t\leq\lambda_{j}$.
    \end{enumerate}
\end{Corollary}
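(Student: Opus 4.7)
The plan is to adapt the proof of \Cref{gen.rel}, replacing the descriptions of $\Sp(\lambda)$ from \secCref{subsec.Schur} with the refined ``flattened'' descriptions provided by \Cref{lem.red.Sp}.

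First, I would establish a $\k$-linear isomorphism
\[
\End_{\k\mathfrak{S}_{r}}(\Sp(\lambda)) \cong \{h \in \Hom_{\k\mathfrak{S}_{r}}(M(\alpha), M(\beta)) \mid h \circ \bar{\phi}_{\alpha} = 0 \text{ and } \bar{\psi}_{\beta} \circ h = 0\},
\]
mirroring \Cref{end.alg.Sp}. Concretely, \myCref{lem.red.Sp}[lem.red.Sp.item.1] presents $\Sp(\lambda)$ as the quotient $M(\alpha)/\im\bar{\phi}_{\alpha}$, whilst \myCref{lem.red.Sp}[lem.red.Sp.item.2] presents it as the submodule $\ker\bar{\psi}_{\beta}$ of $M(\beta)$. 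Composing the canonical projection $M(\alpha) \twoheadrightarrow \Sp(\lambda)$, an endomorphism $\bar{h}\in\End_{\k\mathfrak{S}_{r}}(\Sp(\lambda))$, and the canonical inclusion $\Sp(\lambda) \hookrightarrow M(\beta)$ produces a homomorphism $h: M(\alpha) \to M(\beta)$ satisfying $h \circ \bar{\phi}_{\alpha} = 0$ (since $h$ factors through $\coker\bar{\phi}_{\alpha}$) and $\bar{\psi}_{\beta} \circ h = 0$ (since $\im h \subseteq \ker\bar{\psi}_{\beta}$). Conversely, every such $h$ descends to a well-defined endomorphism of $\Sp(\lambda)$, and the correspondence $\bar{h} \leftrightarrow h$ is manifestly $\k$-linear and bijective.

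Next, I would show that the two global vanishing conditions above are equivalent to the concrete conditions (i) and (ii). For the implication that (i) and (ii) imply $h \circ \bar{\phi}_{\alpha} = 0$ and $\bar{\psi}_{\beta} \circ h = 0$, I would invoke the decompositions from \myCref{lem.vanish}[lem.vanish.item.3] and \myCref{lem.vanish}[lem.vanish.item.4], which express $\bar{\phi}_{\alpha}$ and $\bar{\psi}_{\beta}$ as sums of the maps $\smallsup{\bar{\phi}}[\alpha]{(i,i+1,s)}$ and $\smallsup{\bar{\psi}}[\beta]{(i,i+1,t)}$; the special case $j = i+1$ of conditions (i) and (ii) then immediately forces the vanishing of the full compositions. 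Conversely, if $h \circ \bar{\phi}_{\alpha} = 0$ then $h$ annihilates $\im\bar{\phi}_{\alpha}$, and by \myCref{lem.flat.map.images}[lem.flat.map.images.item.1] this image contains each $\im\smallsup{\bar{\phi}}[\alpha]{(i,j,s)}$ for $1 \le i < j \le m$ and $1 \le s \le \lambda'_{j}$, yielding (i); dually, $\bar{\psi}_{\beta} \circ h = 0$ forces $\im h \subseteq \ker\bar{\psi}_{\beta} \subseteq \ker\smallsup{\bar{\psi}}[\beta]{(i,j,t)}$ for $1 \le i < j \le m$ and $1 \le t \le \lambda_{j}$ by \myCref{lem.flat.map.images}[lem.flat.map.images.item.2], yielding (ii).

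I do not anticipate a genuine obstacle, as every necessary ingredient has been assembled in \Cref{lem.red.Sp}, \Cref{lem.vanish}, and \Cref{lem.flat.map.images}, and the argument is a direct analogue of that for \Cref{gen.rel}. The only mild care needed is in the first step, where one must verify that the correspondence $\bar{h} \leftrightarrow h$ is well-defined and bijective; this amounts to standard bookkeeping using that $M(\alpha) \twoheadrightarrow \Sp(\lambda)$ is surjective and $\Sp(\lambda) \hookrightarrow M(\beta)$ is injective, so that the assignment $\bar{h} \mapsto h$ has no kernel.
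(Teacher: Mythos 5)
Your proposal is correct and follows exactly the route the paper intends: the paper states that the corollary is obtained by combining \Cref{lem.red.Sp}, \Cref{lem.vanish}, and \Cref{lem.flat.map.images}, and your argument is precisely that combination, mirroring the derivation of \Cref{gen.rel} from \Cref{end.alg.Sp} and \Cref{map.images}. No gaps.
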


\begin{Definition}\label{def.sec.rele}
  Let $m\geq 2$, $\lambda=(a,m-1,m-2,\ldots,2,1^{b})$, $\alpha=(a',m-1,\dots,2,b')$, and $\beta=(a,m-1,\ldots,2,b)$. Then:
    \begin{enumerate}[label=(\roman*), font=\normalfont, ref=(\roman*)]
      \item\label{sec.rele.item.1} We say that an element $h\in\Hom_{\k\mathfrak{S}_{r}}(M(\lambda'),M(\lambda))$ is \emph{semirelevant} if $h\circ\smallsup{\phi}[\lambda']{(i,j,1)}=0$ and $\smallsup{\psi}[\lambda]{(i,j,1)}\circ h=0$ for all $m\leq i<j\leq n$.
      \item\label{sec.rele.item.2} We say that an element $h\in\Hom_{\k\mathfrak{S}_{r}}(M(\alpha),M(\beta))$ is \emph{relevant} if $h\circ\smallsup{\bar{\phi}}[\alpha]{(i,j,1)}=0$ and $\smallsup{\bar{\psi}}[\beta]{(i,j,1)}\circ h=0$ for all $1\leq i<j\leq m$.
    \end{enumerate}
\end{Definition}

Denote by $\SRel_{\k\mathfrak{S}_{r}}(M(\lambda'),M(\lambda))$ the $\k$-subspace of $\Hom_{\k\mathfrak{S}_{r}}(M(\lambda'),M(\lambda))$ consisting of the semirelevant homomorphisms $M(\lambda')\to M(\lambda)$, and then, we shall also denote by $\Rel_{\k\mathfrak{S}_{r}}(M(\alpha),M(\beta))$ the $\k$-subspace of $\Hom_{\k\mathfrak{S}_{r}}(M(\alpha),M(\beta))$ consisting of the relevant homomorphisms $M(\alpha)\to M(\beta)$.

\begin{Lemma}\label{lem.semirel.rel}
  Denote by $\omega:\Hom_{\k\mathfrak{S}_{r}}(M(\alpha),M(\beta))\to\Hom_{\k\mathfrak{S}_{r}}(M(\lambda'),M(\lambda))$ the $\k$-linear homomorphism with $\omega(h)\coloneqq\iota_{\beta}\circ h\circ\pi_{\alpha}$. Then $\omega$ induces the following $\k$-linear isomorphisms:
    \begin{enumerate}[label=(\roman*), font=\normalfont, ref=(\roman*)]
      \item\label{lem.semirel.rel.item.1}$\hat{\omega}: \Hom_{\k\mathfrak{S}_{r}}(M(\alpha),M(\beta))\to\SRel_{\k\mathfrak{S}_{r}}(M(\lambda'),M(\lambda))$.
      \item\label{lem.semirel.rel.item.2}$\bar{\omega}: \Rel_{\k\mathfrak{S}_{r}}(M(\alpha),M(\beta))\to\Rel_{\k\mathfrak{S}_{r}}(M(\lambda'),M(\lambda))$.
    \end{enumerate}
\end{Lemma}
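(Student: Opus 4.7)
The plan is to exploit the injectivity of $\iota_\beta$, the surjectivity of $\pi_\alpha$, and the flattening lemmas \Cref{lem.red.Buch,lem.red.Jam} applied via the exact Schur functor $f$. By inspection, $\omega$ is $\k$-linear, and it is injective: if $\iota_\beta\circ h\circ\pi_\alpha=0$, then $h\circ\pi_\alpha=0$ (since $\iota_\beta$ is injective), whence $h=0$ (since $\pi_\alpha$ is surjective).

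For \partCref{lem.semirel.rel.item.1}, I would first verify that $\omega(h)\in\SRel_{\k\mathfrak{S}_{r}}(M(\lambda'),M(\lambda))$ for every $h$: given $m\leq i<j\leq n$, \Cref{lem.vanish} yields $\pi_\alpha\circ\smallsup{\bar{\phi}}[\lambda']{(i,j,1)}=\smallsup{\bar{\phi}}[\alpha]{(i,j,1)}=0$ and $\smallsup{\bar{\psi}}[\lambda]{(i,j,1)}\circ\iota_\beta=\smallsup{\bar{\psi}}[\beta]{(i,j,1)}=0$, so the required compositions both vanish. For surjectivity onto $\SRel$, I would apply $f$ to \myCref{lem.red.Buch}[lem.red.Buch.item.1] and \myCref{lem.red.Jam}[lem.red.Jam.item.1] to identify $\ker\pi_\alpha=\sum_{k=1}^{b'-1}\im\smallsup{\bar{\phi}}[\lambda']{(m+k-1,m+k,1)}$ and $\im\iota_\beta=\bigcap_{k=1}^{b-1}\ker\smallsup{\bar{\psi}}[\lambda]{(m+k-1,m+k,1)}$. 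Given $h'\in\SRel$, the semirelevance conditions applied to the consecutive tail pairs kill each summand of $\ker\pi_\alpha$ and force $\im h'\subseteq\im\iota_\beta$. The universal property of $\pi_\alpha$ then produces a unique $h'':M(\alpha)\to M(\lambda)$ with $h'=h''\circ\pi_\alpha$, and the inclusion of $\im h''=\im h'$ in $\im\iota_\beta$ produces a unique $h\in\Hom_{\k\mathfrak{S}_{r}}(M(\alpha),M(\beta))$ with $h''=\iota_\beta\circ h$, hence $h'=\omega(h)$.

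For \partCref{lem.semirel.rel.item.2}, note that $\Rel_{\k\mathfrak{S}_{r}}(M(\lambda'),M(\lambda))\subseteq\SRel_{\k\mathfrak{S}_{r}}(M(\lambda'),M(\lambda))$, so every relevant homomorphism is already of the form $\omega(h)$ by \partCref{lem.semirel.rel.item.1}. It remains to verify that $\omega(h)\in\Rel$ iff $h\in\Rel$. Using injectivity of $\iota_\beta$ and surjectivity of $\pi_\alpha$, this reduces to whether $h\circ\smallsup{\bar{\phi}}[\alpha]{(i,j,1)}=0$ and $\smallsup{\bar{\psi}}[\beta]{(i,j,1)}\circ h=0$ for every $1\leq i<j\leq n$. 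The indexing splits into three disjoint cases: $m\leq i$, which is automatic by \Cref{lem.vanish}; $1\leq i<j\leq m$, which is precisely the defining condition for $h\in\Rel_{\k\mathfrak{S}_{r}}(M(\alpha),M(\beta))$; and the residual case $1\leq i<m$ with $m<j\leq n$.

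For the residual case, I would exploit the tail symmetry: since positions $m,m+1,\ldots,a$ of $\lambda'$ all equal $1$, for $m<j\leq a$ the position-swap $\sigma_{m,j}$ is a well-defined $\k\mathfrak{S}_{r}$-automorphism of $M(\lambda')$ satisfying $\pi_\alpha\circ\sigma_{m,j}=\pi_\alpha$. A parallel swap $\tau:M(\smallsup{\lambda'}{(i,m,1)})\to M(\smallsup{\lambda'}{(i,j,1)})$ intertwines the ABW-type maps via $\smallsup{\bar{\phi}}[\lambda']{(i,j,1)}\circ\tau=\sigma_{m,j}\circ\smallsup{\bar{\phi}}[\lambda']{(i,m,1)}$, so post-composing with $\pi_\alpha$ yields $\smallsup{\bar{\phi}}[\alpha]{(i,j,1)}\circ\tau=\smallsup{\bar{\phi}}[\alpha]{(i,m,1)}$. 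Since $\tau$ is an isomorphism, $\im\smallsup{\bar{\phi}}[\alpha]{(i,j,1)}=\im\smallsup{\bar{\phi}}[\alpha]{(i,m,1)}$, and relevance of $h$ at $(i,m)$ (which lies in $\{1\leq i<j\leq m\}$) forces vanishing at $(i,j)$; for $j>a$ the map vanishes by \Cref{rem.boundaries}. A dual argument using tail-position swaps in $M(\lambda)$ (which satisfy $\sigma\circ\iota_\beta=\iota_\beta$) handles the $\smallsup{\bar{\psi}}[\beta]{(i,j,1)}$ conditions. The main obstacle will be carefully formulating and verifying these tail-symmetry intertwining relations, which is a direct but somewhat intricate bookkeeping exercise in the permutation-module basis of \secCref{subsec.matrices}.
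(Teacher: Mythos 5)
Your proof is correct and takes essentially the same route as the paper: injectivity comes from the injectivity of $\iota_{\beta}$ and surjectivity of $\pi_{\alpha}$, and surjectivity comes from \myCref{lem.red.Buch}[lem.red.Buch.item.1] and \myCref{lem.red.Jam}[lem.red.Jam.item.1], which identify $\ker\pi_{\alpha}$ and $\im\iota_{\beta}$ with precisely the subspaces that a semirelevant map must kill, respectively land in. The paper's own proof consists of just these two observations and leaves well-definedness implicit; your tail-symmetry argument for the residual case $1\leq i<m<j\leq n$ (together with \Cref{lem.vanish} for $i\geq m$) is a correct and worthwhile filling-in of the detail needed to see that $\bar{\omega}$ actually lands in $\Rel_{\k\mathfrak{S}_{r}}(M(\lambda'),M(\lambda))$.
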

\begin{proof}
  The maps $\hat{\omega}$ and $\bar{\omega}$ are clearly injective. Now, \myCref{lem.red.Buch}[lem.red.Buch.item.1] and \myCref{lem.red.Jam}[lem.red.Jam.item.1] give that both maps are surjective.
\end{proof}

\begin{Remark}\label{rem.comp}
  Let $\gamma\in\Lambda(n,r)$ with $\ell\coloneqq\ell(\gamma)$. Then:
    \begin{enumerate}[label=(\roman*), font=\normalfont, ref=(\roman*)]
      \item\label{rem.comp.item.1} Fix $B\in\Tab(\alpha,\gamma)$. Then $\rho[B]\circ\pi_{\alpha}\in\Hom_{\k\mathfrak{S}_{r}}(M(\lambda'),M(\gamma))$ and one can easily check that $\rho[B]\circ\pi_{\alpha}=\sum_{A}\rho[A]$, where the sum is over those $A\in\Tab(\lambda',\gamma)$ whose first $(m-1)$ rows agree with those of $B$, and also $\sum_{i=m}^{a}a_{ij}=b_{mj}$ for $1\leq j\leq\ell$. Informally, these $A$ are obtained from $B$ by distributing, along columns, each \mbox{non-zero} entry within the $m$th-row of $B$ into rows $m$ through $a$ of $A$ such that these rows of $A$ contain exactly one non-zero, and hence equal to $1$, entry.

      \item\label{rem.comp.item.2} Now, let $B\in\Tab(\gamma,\beta)$. Then $\iota_{\beta}\circ\rho[B]\in\Hom_{\k\mathfrak{S}_{r}}(M(\gamma),M(\lambda))$ and one can easily check that $\iota_{\beta}\circ\rho[B]=\sum_{A}\rho[A]$, where the sum is over those $A\in\Tab(\gamma,\lambda)$ whose first $(m-1)$ columns agree with those of $B$, and also $\sum_{j=m}^{\smash[t]{a'}}a_{ij}=b_{im}$ for $1\leq i\leq\ell$. Informally, these $A$ are obtained from $B$ by distributing, along rows, each \mbox{non-zero} entry within the $m$th-column of $B$ into columns $m$ through $a'$ of $A$ such that these columns of $A$ contain exactly one non-zero, and hence equal to $1$, entry.
    \end{enumerate}
\end{Remark}

\begin{Example}\label{ex.distribute}
  For $\lambda=(3,1^{3})$, we have:
    \begin{alignat*}{2}
      \rho\left[\ \begin{matrix}
        2&2 \\
        1&1
      \end{matrix}\ \right]\circ\pi_{(4,2)}
      =&\ \rho\left[\ \begin{matrix}%
        2&2 \\
        1&0 \\
        0&1
      \end{matrix}\ \right]+\rho\left[\ \begin{matrix}%
        2&2 \\
        0&1 \\
        1&0
      \end{matrix}\ \right], \\
    \iota_{(3,3)}\circ\rho\left[\ \begin{matrix}%
        2&2 \\
        1&1
      \end{matrix}\ \right]
      =&\ \rho\left[\ \begin{matrix}
        2&1&1&0 \\
        1&0&0&1
      \end{matrix}\ \right]+\rho\left[\ \begin{matrix}%
        2&1&0&1\\
        1&0&1&0
      \end{matrix}\ \right]+\rho\left[\ \begin{matrix}%
        2&0&1&1\\
        1&1&0&0
      \end{matrix}\ \right], \\
      \iota_{(3,3)}\circ\rho\left[\ \begin{matrix}%
        2&2 \\
        1&1
      \end{matrix}\ \right]\circ\pi_{(4,2)}
      =&\ \rho\left[\ \begin{matrix}
        2&1&1&0 \\
        1&0&0&0 \\
        0&0&0&1
      \end{matrix}\ \right]+\rho\left[\ \begin{matrix}%
        2&1&0&1 \\
        1&0&0&0 \\
        0&0&1&0
      \end{matrix}\ \right]+\rho\left[\ \begin{matrix}%
        2&0&1&1 \\
        1&0&0&0 \\
        0&1&0&0
      \end{matrix}\ \right] \\
      +&\ \rho\left[\ \begin{matrix}
        2&1&1&0 \\
        0&0&0&1 \\
        1&0&0&0
      \end{matrix}\ \right]+\rho\left[\ \begin{matrix}%
        2&1&0&1 \\
        0&0&1&0 \\
        1&0&0&0
      \end{matrix}\ \right]+\rho\left[\ \begin{matrix}%
        2&0&1&1 \\
        0&1&0&0 \\
        1&0&0&0
      \end{matrix}\ \right].
    \end{alignat*}
\end{Example}

The following \nameCref{lem.end.red} provides an analogue of \Cref{cor.gen.rel}:

\begin{Lemma}\label{lem.end.red}
  Let $h\in\Hom_{\k\mathfrak{S}_{r}}(M(\alpha),M(\beta))$. Then \mbox{$h\in\Rel_{\k\mathfrak{S}_{r}}(M(\alpha),M(\beta))$} if and only if the coefficients $h[B]$ of the $\rho[B]$ in $h$ satisfy:
    \begin{enumerate}[label=(\roman*), font=\normalfont, ref=(\roman*)]
      \item\label{lem.end.red.item.1}For all $1\leq i<j\leq m$, $1\leq k\leq m$, and all $B\in\Tab(\alpha,\beta)$ with $b_{jk}\neq 0$, we have:
        \begin{equation*}\label{eq.end.red.R}
          (b_{ik}+1)h[B]=\sum_{l\neq k}b_{il}h\sq{\rowexop{B}{i}{j}{k}{l}}, \tag{$R_{i,j}^{k}(B)$}
        \end{equation*}
      \item\label{lem.end.red.item.2}For all $1\leq i<j\leq m$, $1\leq k\leq m$, and all $B\in\Tab(\alpha,\beta)$ with $b_{kj}\neq 0$, we have:
        \begin{equation*}\label{eq.end.red.C}
          (b_{ki}+1)h[B]=\sum_{l\neq k}b_{li}h\sq{\colexop{B}{i}{j}{k}{l}}. \tag{$C_{i,j}^{k}(B)$}
        \end{equation*}
    \end{enumerate}
\end{Lemma}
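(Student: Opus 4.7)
The approach is to mirror the derivation of \Cref{cor.gen.rel} from \Cref{lem.gen.rel} via \Cref{lem.compos}, adapted to the flattened setting: the proof reduces to establishing analogues of the composition formulas in \Cref{lem.compos} for $\rho[B]\circ\smallsup{\bar{\phi}}[\alpha]{(i,j,1)}$ and $\smallsup{\bar{\psi}}[\beta]{(i,j,1)}\circ\rho[B]$ with $B\in\Tab(\alpha,\beta)$ and $1\leq i<j\leq m$, and then copying the subsequent manipulations verbatim.

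Concretely, I plan to prove the identity
\[
\rho[B]\circ\smallsup{\bar{\phi}}[\alpha]{(i,j,1)}=\bigg(\sum_{l:\,b_{jl}\neq 0}(b_{il}+1)\rho\sq{\smallsubsup{B}{(j,l)}{(i,l)}}\bigg)\circ\pi_{\smallsup{\alpha}{(i,j,1)}},
\]
together with the symmetric companion for $\smallsup{\bar{\psi}}[\beta]{(i,j,1)}\circ\rho[B]$ involving $\iota_{\smallsup{\beta}{(i,j,1)}}$, where $\pi_{\smallsup{\alpha}{(i,j,1)}}$ and $\iota_{\smallsup{\beta}{(i,j,1)}}$ denote the flattening maps for the modified compositions, defined by exactly the same recipe as $\pi_{\alpha}$ and $\iota_{\beta}$. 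These identities are established by unfolding $\smallsup{\bar{\phi}}[\alpha]{(i,j,1)}=\pi_{\alpha}\circ\smallsup{\bar{\phi}}[\lambda']{(i,j,1)}$, expanding $\rho[B]\circ\pi_{\alpha}$ via \myCref{rem.comp}[rem.comp.item.1] as a sum over the appropriate distributions, applying \myCref{lem.compos}[lem.compos.item.1] to each summand, and then reassembling the result via \myCref{rem.comp}[rem.comp.item.1] applied to $\smallsup{\alpha}{(i,j,1)}$.

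With these identities in hand, the rest of the proof proceeds exactly as in \Cref{lem.gen.rel} and \Cref{cor.gen.rel}. Writing $h=\sum_{B}h[B]\rho[B]$, the surjectivity of $\pi_{\smallsup{\alpha}{(i,j,1)}}$ combined with the linear independence of the basis $\{\rho[C]\mid C\in\Tab(\smallsup{\alpha}{(i,j,1)},\beta)\}$ translates the vanishing $h\circ\smallsup{\bar{\phi}}[\alpha]{(i,j,1)}=0$ into the condition $\sum_{l}c_{il}h\sq{\smallsubsup{C}{(i,l)}{(j,l)}}=0$ for every such $C$. The reindexing trick of \Cref{cor.gen.rel}---fixing $B\in\Tab(\alpha,\beta)$ with $b_{jk}\neq 0$, setting $C=\smallsubsup{B}{(j,k)}{(i,k)}$, and separating the $l=k$ summand from the rest in characteristic $2$---then produces relation~\eqref{eq.end.red.R}. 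The relations~\eqref{eq.end.red.C} follow symmetrically from the $\psi$-identity, now using the injectivity of $\iota_{\smallsup{\beta}{(i,j,1)}}$.

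The main obstacle will be the case $j=m$ in establishing the composition formula: here $\smallsup{\lambda'}{(i,m,1)}$ has a zero in position $m$, and the terms $\rho\sq{\smallsubsup{A}{(m,l)}{(i,l)}}$ produced by \myCref{lem.compos}[lem.compos.item.1] correspond to distributions (in the sense of \myCref{rem.comp}[rem.comp.item.1]) with one fewer $1$-row at the bottom. The key combinatorial observation is that each matrix $A$ in the distribution from $B$ has a unique column $l_{0}$ with $a_{m l_{0}}=1$, that $l_{0}$ can only take values with $b_{m l_{0}}\neq 0$, and that grouping the summands by $l_{0}$ identifies the resulting sum with the distribution from $\smallsubsup{B}{(m,l_{0})}{(i,l_{0})}$, yielding the claimed formula.
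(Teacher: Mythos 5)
Your proposal is correct, and it reaches the lemma by a genuinely different route from the paper's. The paper works at the unflattened level throughout: it forms $\tilde{h}=\omega(h)=\iota_{\beta}\circ h\circ\pi_{\alpha}$, uses \Cref{lem.semirel.rel} to reduce relevance of $h$ to relevance of $\tilde{h}$, and then translates the relations of \Cref{cor.gen.rel} on the coefficients $\tilde{h}[A]$ back and forth into the stated relations on the $h[B]$; the converse direction there forces a four-way case analysis on the positions of $j$ and $k$ relative to $m$, because $\tilde{h}$ must be tested against $\smallsup{\bar{\phi}}[\lambda']{(i,j,1)}$ for all $1\leq i<j\leq n$. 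You instead transplant the mechanism of \Cref{lem.compos}, \Cref{lem.gen.rel}, and \Cref{cor.gen.rel} directly to the flattened level: the one-sided composition formula $\rho[B]\circ\smallsup{\bar{\phi}}[\alpha]{(i,j,1)}=\bigl(\sum_{l}(b_{il}+1)\rho\sq{\smallsubsup{B}{(j,l)}{(i,l)}}\bigr)\circ\pi_{\smallsup{\alpha}{(i,j,1)}}$ and its $\psi$-companion, combined with the surjectivity of $\pi_{\smallsup{\alpha}{(i,j,1)}}$ (resp.\ the injectivity of $\iota_{\smallsup{\beta}{(i,j,1)}}$) and the linear independence of the $\rho[C]$ for $C\in\Tab(\smallsup{\alpha}{(i,j,1)},\beta)$, deliver both implications of the equivalence simultaneously, exactly as in \Cref{lem.gen.rel}. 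This buys a shorter argument that bypasses \Cref{lem.semirel.rel} entirely and collapses the case analysis to $j<m$ versus $j=m$; the price is that the $j=m$ case of the composition formula must be checked carefully, and the observation you state is precisely the right one: for fixed $l_{0}$ with $b_{ml_{0}}\neq 0$, the assignment $A\mapsto\smallsubsup{A}{(m,l_{0})}{(i,l_{0})}$ is a bijection from the distributions $A$ of $B$ (in the sense of \myCref{rem.comp}[rem.comp.item.1]) having $a_{ml_{0}}=1$ onto the distributions of $\smallsubsup{B}{(m,l_{0})}{(i,l_{0})}$, and each such $A$ contributes the single term $(a_{il_{0}}+1)\rho\sq{\smallsubsup{A}{(m,l_{0})}{(i,l_{0})}}$ under \myCref{lem.compos}[lem.compos.item.1] with $a_{il_{0}}=b_{il_{0}}$ since $i<m$, so the regrouping is multiplicity-free and yields the claimed identity.
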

\begin{proof}
  For $B\in\Tab(\alpha,\beta)$, we denote by $\Omega(B)$ the subset of matrices in $\Tab(\lambda',\lambda)$ with:
    \begin{equation}\label{eq.end.red.0}
      \omega(\rho[B])=\iota_{\beta}\circ\rho[B]\circ\pi_{\alpha}=\sum_{A\in\Omega(B)}\rho[A].
    \end{equation}
  Now, we fix $h\in\Hom_{\k\mathfrak{S}_{r}}(M(\alpha),M(\beta))$ with $h=\sum_{B\in\Tab(\alpha,\beta)}h[B]\rho[B]$, and we shall set $\tilde{h}\coloneqq\omega(h)=\iota_{\beta}\circ h\circ\pi_{\alpha}$. Then, it follows from \Cref{rem.comp} that the coefficients $\tilde{h}[A]$ of the $\rho[A]$ in $\tilde{h}$ satisfy:
    \begin{equation}\label{eq.end.red.1}
      \tilde{h}[A]=
        \begin{cases}
          h[B],&\text{if}\ A\in\Omega(B)\settext{for some}B\in\Tab(\alpha,\beta), \\
          0,&\text{otherwise.}
        \end{cases}
    \end{equation}
  Now, suppose that $h$ is relevant and we shall show that the coefficients $h[B]$ of the $\rho[B]$ in $h$ satisfy the relations stated in \labelcref{lem.end.red.item.1}, and it may be shown in a similar manner that they also satisfy the relations stated in \labelcref{lem.end.red.item.2}. Firstly, note that $\tilde{h}$ is relevant by \myCref{lem.semirel.rel}[lem.semirel.rel.item.2]. We fix $1\leq i<j\leq m$, $1\leq k\leq m$, and $B\in\Tab(\alpha,\beta)$ with $b_{jk}\neq 0$. Then, there exists $A\in\Omega(B)$ with $a_{jk}\neq 0$. For such an $A$, since $\tilde{h}$ is relevant, the relation \hyperref[eq.cor.gen.rel.R]{$R_{i,j}^{k}(A)$} of \myCref{cor.gen.rel}[cor.gen.rel.item.2] gives that:
    \begin{equation}\label{eq.end.red.2}
      (a_{ik}+1)\tilde{h}[A]=\sum_{l\neq k}a_{il}\tilde{h}\sq{\rowexop{A}{i}{j}{k}{l}}.
    \end{equation}
  Now, take any $1\leq l\leq n$ with $l\neq k$ such that $a_{il}\neq 0$. If $l<m$, then $a_{il}=b_{il}$ and $\rowexop{A}{i}{j}{k}{l}\in\Omega(\rowexop{B}{i}{j}{k}{l})$, so that $\tilde{h}\sq{\rowexop{A}{i}{j}{k}{l}}=h\sq{\rowexop{B}{i}{j}{k}{l}}$. On the other hand, if $l\geq m$, then $a_{il}=1$ with $\rowexop{A}{i}{j}{k}{l}\in\Omega(\rowexop{B}{i}{j}{k}{m})$ so that $\tilde{h}\sq{\rowexop{A}{i}{j}{k}{l}}=h\sq{\rowexop{B}{i}{j}{k}{m}}$. Therefore, we may rewrite \labelcref{eq.end.red.2} as:
    \begin{equation}\label{eq.end.red.3}
      (a_{ik}+1)h[B]=\sum_{\substack{l<m\\l\neq k}}b_{il}h\sq{\rowexop{B}{i}{j}{k}{l}}+\Big(\sum_{\substack{l\geq m\\l\neq k}}a_{il}\Big)h\sq{\rowexop{B}{i}{j}{k}{m}}.
    \end{equation}
  Now, if $k<m$, then $a_{ik}=b_{ik}$ and $\sum_{l\geq m}a_{il}=b_{im}$. Thus, \labelcref{eq.end.red.3} becomes:
    \[
      (b_{ik}+1)h[B]=\sum_{\substack{l<m\\l\neq k}}b_{il}h\sq{\rowexop{B}{i}{j}{k}{l}}+b_{im}\sq{\rowexop{B}{i}{j}{k}{m}}=\sum_{l\neq k}b_{il}h\sq{\rowexop{B}{i}{j}{k}{l}},
    \]
  which is precisely the relation \hyperref[eq.end.red.R]{$R_{i,j}^{k}(B)$}.

  On the other hand, if $k=m$, then $a_{im}=0$, since $a_{jm}\neq 0$, and so $\sum_{l>m}a_{il}=b_{im}$. Moreover, $\rowexop{B}{i}{j}{k}{m}=B$, and so \labelcref{eq.end.red.3} becomes:
    \[
      h[B]=\sum_{l<m}b_{il}h\sq{\rowexop{B}{i}{j}{k}{l}}+b_{im}h[B],
    \]
  which in turn gives the relation \hyperref[eq.end.red.R]{$R_{i,j}^{m}(B)$}:
    \[
      (b_{im}+1)h[B]=\sum_{l\neq m}b_{il}h\sq{\rowexop{B}{i}{j}{k}{l}}.
    \]
  Conversely, suppose that the coefficients $h[B]$ of the $\rho[B]$ in $h$ satisfy the relations stated in the \nameCref{lem.end.red}. Note that by \myCref{lem.semirel.rel}[lem.semirel.rel.item.2], in order to show that $h$ is relevant, it suffices to show that $\tilde{h}$ is relevant. To this end, we shall show that $\tilde{h}\circ\smallsup{\bar{\phi}}[\lambda']{(i,j,1)}=0$ for $1\leq i<j\leq n$, and it shall follow similarly that $\smallsup{\bar{\psi}}[\lambda]{(i,j,1)}\circ\tilde{h}=0$ for such $i$, $j$. Note that $\tilde{h}$ is semirelevant by \myCref{lem.semirel.rel}[lem.semirel.rel.item.1] and so $\tilde{h}\circ\smallsup{\bar{\phi}}[\lambda']{(i,j,1)}=0$ for $i\geq m$. Therefore, we may assume that $i<m$. Accordingly, fix some $1\leq i<j\leq n$ with $i<m$. Then, as in the proof of \Cref{lem.gen.rel}, we have:
    \begin{equation}\label{eq.end.red.4}
      \tilde{h}\circ\smallsup{\bar{\phi}}[\lambda']{(i,j,1)}=\sum_{C\in\Tab(\smallsup{\lambda'}{(i,j,1)},\lambda)}\left(\sum_{1\leq l\leq n}c_{il}\tilde{h}\sq{\smallsubsup{C}{(i,l)}{(j,l)}}\right)\rho[C].
    \end{equation}
  Let $C\in\Tab(\smallsup{\lambda'}{(i,j,1)},\lambda)$, and we wish to show that the coefficient of $\rho[C]$ in $\tilde{h}\circ\smallsup{\bar{\phi}}[\lambda']{(i,j,1)}$ is equal to $0$. According to \labelcref{eq.end.red.1} and \labelcref{eq.end.red.4}, we may assume that there exists some $1\leq k\leq n$ with $c_{ik}\neq 0$ such that $A\coloneqq\smallsubsup{C}{(i,k)}{(j,k)}\in\Omega(B)$ for some $B\in\Tab(\alpha,\beta)$, where $\Omega(B)$ is as in \labelcref{eq.end.red.0}, since otherwise, each $\tilde{h}\sq{\smallsubsup{C}{(i,l)}{(j,l)}}$ appearing in the coefficient of $\rho[C]$ in \labelcref{eq.end.red.4} is equal to zero. Then, it follows from \labelcref{eq.end.red.4} that the coefficient of $\rho[C]$ in $\tilde{h}\circ\smallsup{\bar{\phi}}[\lambda']{(i,j,1)}$ is:
    \begin{equation}\label{eq.end.red.5}
      c_{ik}h[B]+\sum_{\substack{1\leq l\leq n\\l\neq k}}c_{il}\tilde{h}\sq{\rowexop{A}{i}{j}{k}{l}}.
    \end{equation}
  We split our consideration into the following cases:
    \begin{enumerate}[label=(\roman*), font=\normalfont, ref=(\roman*)]
      \item\label{lem.end.red.proof.item.1}$(j<m;\ k<m)$: We have $c_{ik}=a_{ik}+1=b_{ik}+1$. Now, if $1\leq l<m$ with $l\neq k$, then $c_{il}=a_{il}=b_{il}$ with $\rowexop{A}{i}{j}{k}{l}\in\Omega(\rowexop{B}{i}{j}{k}{l})$ so that $\tilde{h}\sq{\rowexop{A}{i}{j}{k}{l}}=h\sq{\rowexop{B}{i}{j}{k}{l}}$. On the other hand, if $l\geq m$ with $c_{il}\neq 0$, then $c_{il}=a_{il}=1$ with $\rowexop{A}{i}{j}{k}{l}\in\Omega(\rowexop{B}{i}{j}{k}{m})$ so that $\tilde{h}\sq{\rowexop{A}{i}{j}{k}{l}}=h\sq{\rowexop{B}{i}{j}{k}{m}}$. Note that there are precisely $b_{im}$ such values of $l$. Hence, we may rewrite \labelcref{eq.end.red.5} as:
        \[
          (b_{ik}+1)h[B]+\sum_{\substack{1\leq l<m\\l\neq k}}b_{il}h\sq{\rowexop{B}{i}{j}{k}{l}}+b_{im}h\sq{\rowexop{B}{i}{j}{k}{m}}=0,
        \]
      since the coefficient $h[B]$ satisfies the relation \hyperref[eq.end.red.R]{$R_{i,j}^{k}(B)$}.
      \item\label{lem.end.red.proof.item.2}$(j<m;\ k\geq m)$: Here, we have $c_{ik}=1$ and also $b_{jm}\neq 0$ since $A\in\Omega(B)$. Now, if $1\leq l<m$, then $c_{il}=a_{il}=b_{il}$ with $\rowexop{A}{i}{j}{k}{l}\in\Omega(\rowexop{B}{i}{j}{m}{l})$ so that $\tilde{h}\sq{\rowexop{A}{i}{j}{k}{l}}=h\sq{\rowexop{B}{i}{j}{m}{l}}$. On the other hand, if $l\geq m$ with $l\neq k$ and $c_{il}\neq 0$, then $c_{il}=a_{il}=1$ with $\rowexop{A}{i}{j}{k}{l}\in\Omega(B)$ so that $\tilde{h}\sq{\rowexop{A}{i}{j}{k}{l}}=h[B]$. Note that there are precisely $b_{im}$ such values of $l$. Hence, we may rewrite \labelcref{eq.end.red.5} as:
        \[
          h[B]+\sum_{1\leq l<m}b_{il}h\sq{\rowexop{B}{i}{j}{m}{l}}+b_{im}h[B]=0,
        \]
      since the coefficient $h[B]$ satisfies the relation \hyperref[eq.end.red.R]{$R_{i,j}^{m}(B)$}.
      \item\label{lem.end.red.proof.item.3}$(j\geq m;\ k<m)$: Now, we have $c_{ik}=a_{ik}+1=b_{ik}+1$ and also $b_{mk}\neq 0$ since $A\in\Omega(B)$. Now, if $1\leq l<m$ with $l\neq k$, then $c_{il}=a_{il}=b_{il}$ with $\rowexop{A}{i}{j}{k}{l}\in\Omega(\rowexop{B}{i}{m}{k}{l})$ so that $\tilde{h}\sq{\rowexop{A}{i}{j}{k}{l}}=h\sq{\rowexop{B}{i}{m}{k}{l}}$. On the other hand, if $l\geq m$ with $c_{il}\neq 0$, then $c_{il}=a_{il}=1$ with $\rowexop{A}{i}{j}{k}{l}\in\Omega(\rowexop{B}{i}{m}{k}{m})$ so that $\tilde{h}\sq{\rowexop{A}{i}{j}{k}{l}}=h\sq{\rowexop{B}{i}{m}{k}{m}}$. Note that there are precisely $b_{im}$ such values of $l$. Hence, we may rewrite \labelcref{eq.end.red.5} as:
        \[
          (b_{ik}+1)h[B]+\sum_{\substack{1\leq l<m\\l\neq k}}b_{il}h\sq{\rowexop{B}{i}{m}{k}{l}}+b_{im}h\sq{\rowexop{B}{i}{m}{k}{m}}=0,
        \]
      since the coefficient $h[B]$ satisfies the relation \hyperref[eq.end.red.R]{$R_{i,m}^{k}(B)$}.
      \item\label{lem.end.red.proof.item.4}$(j\geq m;\ k\geq m)$: Finally, in this case, we have $c_{ik}=1$ and also $b_{mm}\neq 0$ since $A\in\Omega(B)$. Now, if $1\leq l<m$, then $c_{il}=a_{il}=b_{il}$ with $\rowexop{A}{i}{j}{k}{l}\in\Omega(\rowexop{B}{i}{m}{m}{l})$ so that $\tilde{h}\sq{\rowexop{A}{i}{j}{k}{l}}=h\sq{\rowexop{B}{i}{m}{m}{l}}$. On the other hand, if $l\geq m$ with $l\neq k$ and $c_{il}\neq 0$, then $c_{il}=a_{il}=1$ with $\rowexop{A}{i}{j}{k}{l}\in\Omega(B)$ so that $\tilde{h}\sq{\rowexop{A}{i}{j}{k}{l}}=h[B]$. Note that there are precisely $b_{im}$ such values of $l$. Hence, we may rewrite \labelcref{eq.end.red.5} as:
        \[
          h[B]+\sum_{1\leq l<m}b_{il}h\sq{\rowexop{B}{i}{m}{m}{l}}+b_{im}h[B]=0,
        \]
      since the coefficient $h[B]$ satisfies the relation \hyperref[eq.end.red.R]{$R_{i,m}^{m}(B)$}.
    \end{enumerate}
  Thus, we have shown that the coefficient of $\rho[C]$ in $\tilde{h}\circ\smallsup{\bar{\phi}}[\lambda']{(i,j,1)}$ is zero in all possible cases, and so we are done.
\end{proof}

Now, since $\alpha$ and $\beta$ both have length $m$, we may ignore the final $(n-m)$ rows and columns of each matrix in $\Tab(\alpha,\beta)$ and $\Tab(\beta,\alpha)$. Accordingly, we identify $\Tab(\alpha,\beta)$ with the set $\mathcal{T}\coloneqq\{A\in M_{m\times m}(\mathbb{N})\mid\sum_{j}a_{ij}=\alpha_{i}\settext{and}\sum_{i}a_{ij}=\beta_{j}\}$, and $\Tab(\beta,\alpha)$ with the set $\mathcal{T}'\coloneqq\{A\in M_{m\times m}(\mathbb{N})\mid\sum_{j}a_{ij}=\beta_{i}\settext{and}\sum_{i}a_{ij}=\alpha_{j}\}$.

\begin{Remark}\label{rem.swap}
  Note that $\lambda$ and its transpose $\lambda'$ are of the same form. That is to say, the swap $\lambda\leftrightarrow\lambda'$ is equivalent to the swap $(a,b)\leftrightarrow(a',b')$, where \mbox{$a'=b+m-1$}, \mbox{$b'=a-m+1$} respectively, which in turn is equivalent to the swap $\alpha\leftrightarrow\beta$. Therefore, after defining the notion of \emph{relevance} for elements $h\in\Hom_{\k\mathfrak{S}_{r}}(M(\beta),M(\alpha))$, similarly to \myCref{def.sec.rele}[sec.rele.item.2], and also swapping $\mathcal{T}$ with $\mathcal{T}'$, we obtain the following analogue of \Cref{lem.end.red}:
\end{Remark}

\begin{Corollary}\label{cor.end.red.tr}
  Let $h\in\Hom_{\k\mathfrak{S}_{r}}(M(\beta),M(\alpha))$. Then \mbox{$h\in\Rel_{\k\mathfrak{S}_{r}}(M(\beta),M(\alpha))$} if and only if the coefficients $h[B]$ of the $\rho[B]$ in $h$ satisfy:
    \begin{enumerate}[label=(\roman*), font=\normalfont, ref=(\roman*)]
      \item\label{cor.end.red.tr.item.1}$R_{i,j}^{k}(B)$ for all $1\leq i<j\leq m$, $1\leq k\leq m$, and $B\in\mathcal{T}'$ with $b_{jk}\neq 0$,
      \item\label{cor.end.red.tr.item.2}$C_{i,j}^{k}(B)$ for all $1\leq i<j\leq m$, $1\leq k\leq m$, and $B\in\mathcal{T}'$ with $b_{kj}\neq 0$.
    \end{enumerate}
\end{Corollary}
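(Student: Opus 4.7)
The plan is to obtain this corollary as a direct consequence of \Cref{lem.end.red} applied to the transpose partition $\lambda'$. As observed in \Cref{rem.swap}, $\lambda' = (a', m-1, \ldots, 2, 1^{b'})$ has exactly the same shape as $\lambda$, so the entire machinery of \Cref{subsec.reduc} (including the maps $\pi_\alpha$, $\iota_\beta$, $\smallsup{\bar{\phi}}[\alpha]{(i,j,s)}$, $\smallsup{\bar{\psi}}[\beta]{(i,j,t)}$, and all surrounding results) may be repeated verbatim for $\lambda'$. Under the swap $\lambda \leftrightarrow \lambda'$, the compositions $\alpha = (a', m-1, \ldots, 2, b')$ and $\beta = (a, m-1, \ldots, 2, b)$ are interchanged, and hence so are the sets $\mathcal{T} = \Tab(\alpha, \beta)$ and $\mathcal{T}' = \Tab(\beta, \alpha)$.

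Following \Cref{rem.swap}, I first make explicit the notion of \emph{relevance} for $h \in \Hom_{\k\mathfrak{S}_r}(M(\beta), M(\alpha))$: mirroring \myCref{def.sec.rele}[sec.rele.item.2] for the partition $\lambda'$, we declare $h$ to be relevant if $h \circ \smallsup{\bar{\phi}}[\beta]{(i,j,1)} = 0$ and $\smallsup{\bar{\psi}}[\alpha]{(i,j,1)} \circ h = 0$ for all $1 \leq i < j \leq m$, where the maps $\smallsup{\bar{\phi}}[\beta]{(i,j,1)}$ and $\smallsup{\bar{\psi}}[\alpha]{(i,j,1)}$ are the analogues for $\lambda'$ of $\smallsup{\bar{\phi}}[\alpha]{(i,j,1)}$ and $\smallsup{\bar{\psi}}[\beta]{(i,j,1)}$. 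They are well-defined because the constructions of $\pi_\alpha$ and $\iota_\beta$ in \Cref{rem.ident} depend only on the common shape of $\lambda$ and $\lambda'$, producing the natural surjection $\pi_\beta$ and injection $\iota_\alpha$ that play the roles of $\pi_\alpha$ and $\iota_\beta$ when working with $\lambda'$.

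I then invoke \Cref{lem.end.red} with $\lambda$ replaced throughout by $\lambda'$. This yields precisely the desired characterisation: $h \in \Rel_{\k\mathfrak{S}_r}(M(\beta), M(\alpha))$ if and only if the coefficients $h[B]$ in the expansion $h = \sum_{B \in \mathcal{T}'} h[B] \rho[B]$ (which is a basis by \Cref{rem.trans}) satisfy the relations $R_{i,j}^k(B)$ and $C_{i,j}^k(B)$ for all $1 \leq i < j \leq m$, $1 \leq k \leq m$, and $B \in \mathcal{T}'$ with the respective non-vanishing conditions on entries of $B$. The key observation is that these relations are entirely intrinsic to the matrix $B$ and do not depend on whether $B$ parametrises a basis element of $\Hom_{\k\mathfrak{S}_r}(M(\alpha), M(\beta))$ or of $\Hom_{\k\mathfrak{S}_r}(M(\beta), M(\alpha))$, so the output of \Cref{lem.end.red} for $\lambda'$ agrees verbatim with \labelcref{cor.end.red.tr.item.1} and \labelcref{cor.end.red.tr.item.2}.

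I do not anticipate any serious obstacle here; the proof is a formal transpose-symmetry argument. The only subtle point is ensuring that each ingredient (the reduced maps for $\lambda'$, the basis $\{\rho[B] \mid B \in \mathcal{T}'\}$ of $\Hom_{\k\mathfrak{S}_r}(M(\beta), M(\alpha))$, and the reformulation of relevance in terms of matrix-entry coefficients) correctly matches what \Cref{lem.end.red} produces when applied to $\lambda'$. This is routine to verify from the explicit combinatorial descriptions in \Cref{rem.comp} and \Cref{exam.basis.elem}, which show that each of these objects is governed purely by the shape of the partition and the entries of the underlying matrix.
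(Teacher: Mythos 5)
Your proposal is correct and is essentially identical to the paper's own argument: the paper derives \Cref{cor.end.red.tr} precisely by observing (in \Cref{rem.swap}) that the swap $\lambda\leftrightarrow\lambda'$ amounts to $(a,b)\leftrightarrow(a',b')$, hence to $\alpha\leftrightarrow\beta$ and $\mathcal{T}\leftrightarrow\mathcal{T}'$, and then applying \Cref{lem.end.red} to $\lambda'$ after defining relevance for elements of $\Hom_{\k\mathfrak{S}_{r}}(M(\beta),M(\alpha))$. Your additional checks (that the reduced maps, the basis $\{\rho[B]\mid B\in\mathcal{T}'\}$, and the relations depend only on the common shape of the partition) are exactly the verifications the paper leaves implicit.
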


The following \nameCref{rem.ide.end} is clear:

\begin{Remark}\label{rem.ide.end}
  Let $m\geq 2$ and $\lambda=(a,m-1,m-2,\ldots,2,1^{b})$. Then:
    \begin{enumerate}[label=(\roman*), font=\normalfont, ref=(\roman*)]\label{rem.emb.second.rel}
      \item\label{rem.emb.second.rel.item.1} We have a $\k$-linear embedding of the endomorphism algebra of $\Sp(\lambda)$ into the \mbox{$\k$-space} $\Rel_{\k\mathfrak{S}_{r}}(M(\alpha),M(\beta))$.
      \item\label{rem.emb.second.rel.item.2} We have a $\k$-linear embedding of the endomorphism algebra of $\Sp(\lambda')$ into the \mbox{$\k$-space} $\Rel_{\k\mathfrak{S}_{r}}(M(\beta),M(\alpha))$.
    \end{enumerate}
\end{Remark}

\begin{Remark}\label{lem.hom.rel}
  Let $h\in\Hom_{\k\mathfrak{S}_{r}}(M(\alpha),M(\beta))$ and consider its transpose homomorphism $h'\in\Hom_{\k\mathfrak{S}_{r}}(M(\beta),M(\alpha))$. We have:
    \begin{enumerate}[label=(\roman*), font=\normalfont, ref=(\roman*)]
      \item\label{lem.hom.rel.item.1}For $1\leq i<j\leq m$, $1\leq k\leq m$, and $A\in\mathcal{T}$ with $a_{jk}\neq 0$, the relation $R_{i,j}^{k}(A)$ concerning the coefficient of $\rho[A]$ in $h$ coincides with the relation $C_{i,j}^{k}(A')$ concerning the coefficient of $\rho[A']$ in $h'$.
      \item\label{lem.hom.rel.item.2}For $1\leq i<j\leq m$, $1\leq k\leq m$, and $A\in\mathcal{T}$ with $a_{kj}\neq 0$, the relation $C_{i,j}^{k}(A)$ concerning the coefficient of $\rho[A]$ in $h$ coincides with the relation $R_{i,j}^{k}(A')$ concerning the coefficient of $\rho[A']$ in $h'$.
      \item\label{lem.hom.rel.item.3}The transpose homomorphism $h'$ is relevant if and only if $h$ is relevant.
    \end{enumerate}
\end{Remark}

\subsection{A critical relation}\label{subsec.new.rel}

Here, we shall highlight a new relation that occurs as a combination of the relations $R_{i,j}^{k}(A)$ and $C_{i,j}^{k}(A)$ of \Cref{lem.end.red} that will play an important role in our considerations below.

\begin{Lemma}\label{lem.critical.relation}
  Suppose that $h\in\Hom_{\k\mathfrak{S}_{r}}(M(\alpha),M(\beta))$ is a relevant homomorphism. Then the coefficients $h[A]$ of the $\rho[A]$ in $h$ satisfy the relations:
    \begin{equation*}\label{eq.critical.relation}
      z_{j,k}(A)h[A]=\sum_{\substack{i<j\\l>k}}a_{il}h\sq{\rowexop{A}{i}{j}{k}{l}}+\sum_{\substack{i>j\\l<k}}a_{il}h\sq{\rowexop{A}{i}{j}{k}{l}}, \tag{$Z_{j,k}(A)$}
    \end{equation*}
  for all $1\leq j,k\leq m$ and $A\in\mathcal{T}$ with $a_{jk}\neq 0$, where $z_{j,k}(A)\coloneqq\sum\limits_{i<j}a_{ik}+\sum\limits_{l<k}a_{jl}+j+k\in\k$.
\end{Lemma}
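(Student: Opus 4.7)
The plan is to derive $Z_{j,k}(A)$ as an $\mathbb{F}_2$-linear combination of the row-type and column-type relations from \Cref{lem.end.red}. Fix $j,k$ with $a_{jk}\neq 0$.

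First, I would sum the relations \hyperref[eq.end.red.R]{$R_{i,j}^{k}(A)$} over all $i$ with $1\leq i<j$. Each is valid because $a_{jk}\neq 0$. This yields
\[
\Bigl(\sum_{i<j} a_{ik} + (j-1)\Bigr)\,h[A] \;=\; \sum_{i<j,\, l<k} a_{il}\,h\sq{\rowexop{A}{i}{j}{k}{l}} + \sum_{i<j,\, l>k} a_{il}\,h\sq{\rowexop{A}{i}{j}{k}{l}}.
\]
Next I would apply the column relation \hyperref[eq.end.red.C]{$C_{k',k}^{j}(A)$} for each $k'<k$; these are valid because $a_{jk}\neq 0$. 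The key combinatorial observation is that unwinding the two macros gives the identification $\colexop{A}{k'}{k}{j}{l}=\rowexop{A}{l}{j}{k}{k'}$, since both describe the matrix with entries decreased at $(j,k),(l,k')$ and increased at $(l,k),(j,k')$. Summing over $k'<k$ and relabelling the summation variables $(l,k')\leftrightarrow(i,l)$ produces
\[
\Bigl(\sum_{l<k} a_{jl} + (k-1)\Bigr)\,h[A] \;=\; \sum_{i<j,\, l<k} a_{il}\,h\sq{\rowexop{A}{i}{j}{k}{l}} + \sum_{i>j,\, l<k} a_{il}\,h\sq{\rowexop{A}{i}{j}{k}{l}}.
\]

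The final step is to add these two identities in characteristic $2$. The overlapping block of terms indexed by $i<j,\,l<k$ appears in both right-hand sides and therefore cancels, leaving exactly the sum $\sum_{i<j,\,l>k} + \sum_{i>j,\,l<k}$ on the right. On the left, the coefficient of $h[A]$ becomes $\sum_{i<j}a_{ik}+\sum_{l<k}a_{jl}+(j-1)+(k-1)$, which equals $z_{j,k}(A)$ modulo $2$. This is precisely $Z_{j,k}(A)$.

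I expect no serious obstacle beyond bookkeeping. The only subtle point is verifying the reindexing identity between $\colexop$ and $\rowexop$, and checking the edge cases $j=1$ or $k=1$: when $j=1$ the $R$-sum is empty, when $k=1$ the $C$-sum is empty, and when $j=k=1$ the relation $Z_{1,1}(A)$ reduces to $0=0$ since $z_{1,1}(A)=2\equiv 0$ and both right-hand sums are empty. In each of these degenerate situations the computation above remains valid, and the parity match $(j-1)+(k-1)\equiv j+k\pmod 2$ takes care of the constant term.
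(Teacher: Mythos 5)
Your proposal is correct and is essentially the paper's own argument: both form the combination $\sum_{i<j}R_{i,j}^{k}(A)+\sum_{k'<k}C_{k',k}^{j}(A)$, use the identification of the relevant $\colexop$ matrices with $\rowexop$ matrices to put everything in a common form, and cancel the doubly-occurring block indexed by $i<j$, $l<k$ in characteristic $2$. The bookkeeping, including the parity identity $(j-1)+(k-1)\equiv j+k$ and the degenerate cases $j=1$ or $k=1$, checks out.
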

\begin{proof}
  Since $h$ is relevant, the coefficients $h[A]$ of the $\rho[A]$ in $h$ satisfy the relations of \Cref{lem.end.red}, and so in particular, given $1\leq j,k\leq m$, the coefficients satisfy the relation $\sum_{i<j}R_{i,j}^{k}(A)+\sum_{l<k}C_{l,k}^{j}(A)$ for all $A\in\mathcal{T}$ with $a_{jk}\neq 0$. But, the \mbox{left-hand} side of this relation is given by:
    \begin{equation}\label{eq.lem.proof.critical.relation.1}
      \sum_{i<j}(a_{ik}+1)h[A]+\sum_{l<k}(a_{jl}+1)h[A]=z_{j,k}(A)h[A],
    \end{equation}
  by definition of $z_{j,k}(A)$. On the other hand, the \mbox{right-hand} side of this relation is:
    \begin{equation}\label{eq.lem.proof.critical.relation.2}
      \sum_{\substack{i<j\\l\neq k}}a_{il}h\sq{\rowexop{A}{i}{j}{k}{l}}+\sum_{\substack{l<k\\i\neq j}}a_{il}h\sq{\colexop{A}{l}{k}{j}{i}}.
    \end{equation}
  Now, notice that for $i<j$, $l<k$ we have $\colexop{A}{l}{k}{j}{i}=\smallsubsup{A}{(j,k)(i,l)}{(i,k)(j,l)}$ and so after cancelling those terms that appear twice, we may rewrite \labelcref{eq.lem.proof.critical.relation.2} as:
    \[
      \sum_{\substack{i<j\\l\neq k}}a_{il}h\sq{\rowexop{A}{i}{j}{k}{l}}+\sum_{\substack{l<k\\i\neq j}}a_{il}h\sq{\rowexop{A}{i}{j}{k}{l}}=\sum_{\substack{i<j\\l>k}}a_{il}h\sq{\rowexop{A}{i}{j}{k}{l}}+\sum_{\substack{i>j\\l<k}}a_{il}h\sq{\rowexop{A}{i}{j}{k}{l}},
    \]
 which, along with \labelcref{eq.lem.proof.critical.relation.1}, gives the required expression.
\end{proof}

\section{One-dimensional endomorphism algebra}\label{sec.end.alg.main}

Given integers $s$, $t$, we write $s \equiv t$ to mean that $s$ is congruent to $t$ modulo $2$, and so in particular, are equal as elements of the field $\k$. From here, we shall assume that the parameters $a$, $b$, and $m$ satisfy the parity condition: \mbox{$a-m\equiv b\mod{2}$}. Note that this condition is preserved by the swap \mbox{$(a,b)\leftrightarrow(a',b')$}, where $a'=b+m-1$, $b'=a-m+1$.

\bigskip

Firstly, we highlight some basic properties of the coefficients $z_{j,k}(A)$ from \Cref{lem.critical.relation}.

\begin{Lemma}\label{lem.critical.rel.coef}
  Let $A\in\mathcal{T}$. Then:
    \begin{enumerate}[label=(\roman*), font=\normalfont, ref=(\roman*)]
      \item\label{lem.critical.rel.coef.item.1}$z_{j,k}(A)=\sum_{i>j}a_{ik}+\sum_{l>k}a_{jl}+\alpha_{j}+\beta_{k}+j+k$ for $1\leq j,k\leq m$.
      \item\label{lem.critical.rel.coef.item.2}$z_{j,k}(A)=\sum_{i>j}a_{ik}+\sum_{l>k}a_{jl}$ for $1<j,k<m$.
      \item\label{lem.critical.rel.coef.item.3}$z_{j,m}(A)=b+1+\sum_{i>j}a_{im}$ and $z_{m,k}(A)=a+m+\sum_{i>k}a_{mi}$ for $1<j,k<m$.
      \item\label{lem.critical.rel.coef.item.4}$z_{m,m}(A)=1$.
      \item\label{lem.critical.rel.coef.item.5}$z_{1,m}(A)=\sum_{i>1}a_{im}$ and $z_{m,1}(A)=\sum_{i>1}a_{mi}$.
    \end{enumerate}
\end{Lemma}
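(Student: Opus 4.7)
The plan is to derive part \labelcref{lem.critical.rel.coef.item.1} directly from the definition of $z_{j,k}(A)$ by using the defining row- and column-sum conditions of $\mathcal{T}$, and then to obtain \labelcref{lem.critical.rel.coef.item.2}--\labelcref{lem.critical.rel.coef.item.5} as specializations by substituting the explicit values of $\alpha_j$ and $\beta_k$ recorded in \Cref{subsec.reduc}, working in characteristic $2$ throughout. The whole proof is a bookkeeping exercise; the single place where genuine input is needed is \labelcref{lem.critical.rel.coef.item.4}, where the parity condition $a-m\equiv b\mod 2$ is essential.

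For \labelcref{lem.critical.rel.coef.item.1}, since $A\in\mathcal{T}$ has row-sums $\alpha$ and column-sums $\beta$, I would write $\sum_{i<j}a_{ik}=\beta_{k}-a_{jk}-\sum_{i>j}a_{ik}$ and $\sum_{l<k}a_{jl}=\alpha_{j}-a_{jk}-\sum_{l>k}a_{jl}$. Adding these two identities and substituting into the definition of $z_{j,k}(A)$ gives
\[
z_{j,k}(A)=\alpha_{j}+\beta_{k}-2a_{jk}-\sum_{i>j}a_{ik}-\sum_{l>k}a_{jl}+j+k,
\]
and in characteristic $2$ the term $-2a_{jk}$ disappears and the signs on the two sums flip, yielding \labelcref{lem.critical.rel.coef.item.1}.

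The remaining parts follow by plugging into \labelcref{lem.critical.rel.coef.item.1} the values $\alpha_{1}=a'$, $\alpha_{j}=m-j+1$ for $1<j<m$, $\alpha_{m}=b'$ and likewise $\beta_{1}=a$, $\beta_{k}=m-k+1$ for $1<k<m$, $\beta_{m}=b$. For \labelcref{lem.critical.rel.coef.item.2} one gets an additive constant of $2m+2\equiv 0$; for \labelcref{lem.critical.rel.coef.item.3} the analogous constants are $2m+b+1\equiv b+1$ and $2m+b'+1\equiv b'+1$, and $b'+1=a-m+2\equiv a+m\mod 2$; for \labelcref{lem.critical.rel.coef.item.5} the constants $a'+b+m+1=2b+2m$ and $b'+a+m+1=2a+2$ both vanish mod $2$.

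The only point of substance is \labelcref{lem.critical.rel.coef.item.4}: here \labelcref{lem.critical.rel.coef.item.1} gives $z_{m,m}(A)\equiv b'+b+2m\equiv b'+b=a+b-m+1\mod 2$, and the parity hypothesis $a-m\equiv b$ forces $a-m+b\equiv 2b\equiv 0$, so $z_{m,m}(A)\equiv 1$. This is the main (and essentially only) obstacle, but it is immediate once one tracks the parities carefully; the remainder of the lemma is routine substitution.
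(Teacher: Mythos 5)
Your proposal is correct and follows exactly the paper's argument: part (i) by substituting the row- and column-sum identities into the definition of $z_{j,k}(A)$ and reducing mod $2$, and parts (ii)--(v) by plugging in the explicit entries of $\alpha$ and $\beta$ (with the standing parity hypothesis $a-m\equiv b$ entering only in part (iv), as you note). All the arithmetic checks out.
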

\begin{proof}
  \PartCref{lem.critical.rel.coef.item.1} follows from substituting the two expressions: $\sum_{i<j}a_{ik}=\beta_{k}-\sum_{i\geq j}a_{ik}$ and $\sum_{l<k}a_{jl}=\alpha_{j}-\sum_{l\geq k}a_{ji}$ into the definition of $z_{i,j}(A)$. \hyperref[lem.critical.rel.coef]{Parts \labelcref*{lem.critical.rel.coef.item.2}-\labelcref*{lem.critical.rel.coef.item.5}} then follow immediately from \partCref{lem.critical.rel.coef.item.1} along with the forms of $\alpha$ and $\beta$.
\end{proof}

\begin{Definition}\label{def.order}
  Let $A, B\in\mathcal{T}$. Then:
    \begin{enumerate}[label=(\roman*), font=\normalfont, ref=(\roman*)]
      \item\label{def.order.item.1}We write $A<_{R}B$ to mean that $B$ follows $A$ under the induced lexicographical order on rows, reading left to right and bottom to top. This is a total order and we call it the \emph{row-order}.
      \item\label{def.order.item.2}We write $A<_{C}B$ to mean that $B$ follows $A$ under the induced lexicographical order on columns, reading top to bottom and right to left. This is a total order and we call it the \emph{column-order}.
    \end{enumerate}
\end{Definition}

\begin{Remark}\label{rem.order}
  Let $1\leq j,k\leq m$ and let $A\in\mathcal{T}$ with $a_{jk}\neq 0$. Then any $B=\smallsubsup{A}{(j,k)(i,l)}{(i,k)(j,l)}$ that appears in the relation \hyperref[eq.critical.relation]{$Z_{j,k}(A)$} of \Cref{lem.critical.relation} satisfies both $B<_{R}A$ and $B<_{C}A$.
\end{Remark}

From now on, we fix a relevant homomorphism $h\in\Hom_{\k\mathfrak{S}_{r}}(M(\alpha),M(\beta))$.

\begin{Lemma}\label{lem.bottom.right.entry}
  Let $A\in\mathcal{T}$ and suppose that $a_{mm}\neq 0$. Then $h[A]=0$.
\end{Lemma}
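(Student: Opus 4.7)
The plan is to apply the critical relation \hyperref[eq.critical.relation]{$Z_{j,k}(A)$} from \Cref{lem.critical.relation} in the extremal case $j = k = m$. Since $a_{mm} \neq 0$ by hypothesis, the relation $Z_{m,m}(A)$ is available and reads
\[
  z_{m,m}(A) h[A] = \sum_{\substack{i<m\\l>m}} a_{il} h\sq{\rowexop{A}{i}{m}{m}{l}} + \sum_{\substack{i>m\\l<m}} a_{il} h\sq{\rowexop{A}{i}{m}{m}{l}}.
\]

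The key observation is that both summations on the right-hand side are vacuous: after identifying $\mathcal{T}$ with matrices in $M_{m\times m}(\mathbb{N})$ (as done at the end of \secCref{subsec.reduc}), indices only range over $\{1,\ldots,m\}$, so there are no pairs $(i,l)$ with $l > m$ nor pairs with $i > m$. Meanwhile, \myCref{lem.critical.rel.coef}[lem.critical.rel.coef.item.4] gives $z_{m,m}(A) = 1$.

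Thus the relation collapses to $h[A] = 0$, which is precisely the statement. The one-line proof is immediate from \Cref{lem.critical.relation} combined with \myCref{lem.critical.rel.coef}[lem.critical.rel.coef.item.4]; the main work was packaged into the earlier derivation of the critical relation $Z_{j,k}(A)$, and no further argument is needed here.
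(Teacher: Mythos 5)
Your proof is correct and is essentially identical to the paper's: both invoke the relation $Z_{m,m}(A)$ together with \myCref{lem.critical.rel.coef}[lem.critical.rel.coef.item.4], which gives $z_{m,m}(A)=1$, and observe that the right-hand side is empty. Your explicit remark that the sums are vacuous because no index exceeds $m$ is just the unpacking of what the paper leaves implicit.
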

\begin{proof}
  Firstly, $z_{m,m}(A)=1$ by \myCref{lem.critical.rel.coef}[lem.critical.rel.coef.item.4], and the result follows by \hyperref[eq.critical.relation]{$Z_{m,m}(A)$}. \qedhere
\end{proof}

\begin{Remark}\label{rem.Murphy}
  Assume that $m=2$, where then $\alpha=(b+1,a-1)$ and $\beta=(a,b)$. Suppose that $h\in\Hom_{\k\mathfrak{S}_{r}}(M(\alpha),M(\beta))$ is a non-zero relevant homomorphism, and suppose that $A\in\mathcal{T}$ is such that $h[A]\neq 0$. We may assume that $a_{22}=0$ by \Cref{lem.bottom.right.entry}. Now, since $a_{12}+a_{22}=b$ and $a_{21}+a_{22}=a-1$, we deduce that $a_{12}=b$ and $a_{21}=a-1$. Moreover, since $a_{11}+a_{12}=b+1$, we have that $a_{11}=1$. Hence, there is a unique matrix $A$ for which $h[A]\neq 0$, namely:
    \[
      A=
        \begin{tabular}{|c|c|}
          \hline
            $1$ & $b$ \\
          \hline
            $a-1$ &$0$ \\
          \hline
        \end{tabular}
      \ .
    \]
  Hence for $\lambda=(a,1^{b})$ with $a\equiv b\mod{2}$, we deduce that $\End_{\k\mathfrak{S}_{r}}(\Sp(\lambda))\cong\k$, and in this way we recover Murphy's result \cite[Theorem 4.1]{M}.
\end{Remark}

\begin{Lemma}\label{lem.outside.rim}
  Let $A\in\mathcal{T}$ and suppose that there exist some $1<j,k<m$ such that $a_{jm}\neq 0$ and $a_{mk}\neq 0$. Then $h[A]=0$.
\end{Lemma}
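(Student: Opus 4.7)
The plan is to proceed by strong induction on $A$ in the column-order $<_{C}$ of \Cref{def.order}, applying the critical relation $Z_{j,k}(A)$ of \Cref{lem.critical.relation}. The induction is well-posed because, by \Cref{rem.order}, every matrix $B=\rowexop{A}{i}{j}{k}{l}$ appearing on the right-hand side of $Z_{j,k}(A)$ satisfies $B<_{C}A$.

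First I would invoke \Cref{lem.bottom.right.entry} to reduce to the case $a_{mm}=0$. The key step is then to apply exactly one of the two critical relations $Z_{j^{*},m}(A)$ or $Z_{m,k^{*}}(A)$, where $j^{*}$ (respectively $k^{*}$) denotes the \emph{largest} index in $\{2,\dots,m-1\}$ with $a_{j^{*}m}\neq 0$ (respectively $a_{mk^{*}}\neq 0$); both such indices exist by hypothesis. The maximality of $j^{*}$ together with $a_{mm}=0$ forces $\sum_{i>j^{*}}a_{im}=0$, so \myCref{lem.critical.rel.coef}[lem.critical.rel.coef.item.3] yields $z_{j^{*},m}(A)=b+1$; symmetrically $z_{m,k^{*}}(A)=a+m$. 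The standing parity condition $a-m\equiv b\pmod{2}$ enters here in an essential way: since $a+m\equiv a-m\equiv b\pmod{2}$, exactly one of $b+1$ and $a+m$ is odd in $\k$. Accordingly, I would use $Z_{j^{*},m}(A)$ when $b$ is even and $Z_{m,k^{*}}(A)$ when $b$ is odd, so that the leading coefficient is $1$ in either case.

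It remains to show that every term on the right-hand side of the chosen relation vanishes. For the relation $Z_{j^{*},m}(A)$, the first sum is empty (no $l>m$), and a typical summand of the second sum is $B=\rowexop{A}{i}{j^{*}}{m}{l}$ with $i>j^{*}$, $l<m$. If $i=m$ then $b_{mm}=a_{mm}+1=1\neq 0$, so $h[B]=0$ by \Cref{lem.bottom.right.entry}. If $j^{*}<i<m$ and $a_{il}\neq 0$, the maximality of $j^{*}$ forces $a_{im}=0$, hence $b_{im}=1\neq 0$ with $1<i<m$; moreover the row-exchange does not touch row $m$, so $b_{mk}=a_{mk}\neq 0$ is preserved. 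Thus $B$ still satisfies the hypothesis of the \nameCref{lem.outside.rim}, and $B<_{C}A$, so $h[B]=0$ by the inductive hypothesis. The argument for $Z_{m,k^{*}}(A)$ is entirely symmetric: the maximality of $k^{*}$ together with $a_{mm}=0$ ensures that each smaller matrix $B$ either has $b_{mm}=1$ (apply \Cref{lem.bottom.right.entry}) or retains a non-zero entry $b_{ml}=1$ in row $m$ strictly between columns $1$ and $m$, while column $m$ of $B$ is untouched.

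The main obstacle is the coefficient-parity bookkeeping: without the standing hypothesis $a-m\equiv b\pmod{2}$ neither $z_{j^{*},m}(A)$ nor $z_{m,k^{*}}(A)$ would be forced to be non-zero in $\k$, and the whole induction would collapse. Verifying that each modified matrix $B$ stays within the inductive class---that is, still possesses both a non-zero entry in row $m$ strictly between columns $1$ and $m$ and a non-zero entry in column $m$ strictly between rows $1$ and $m$---is precisely where the maximality of $j^{*}$ and $k^{*}$ is indispensable.
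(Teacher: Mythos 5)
Your proposal is correct and follows essentially the same route as the paper: a minimal counterexample with respect to $<_{C}$, the maximal indices $j^{*},k^{*}$ in the last column and row, reduction of the $i=m$ (resp.\ $l=m$) terms via \Cref{lem.bottom.right.entry}, and the observation that the remaining terms still satisfy the hypothesis of the lemma. The only difference is cosmetic: the paper adds the two relations, using $z_{j^{*},m}(A)+z_{m,k^{*}}(A)=1$ to avoid any case split, whereas you select a single relation $Z_{j^{*},m}(A)$ or $Z_{m,k^{*}}(A)$ according to the parity of $b$ --- both uses of the parity condition $a-m\equiv b$ are equivalent.
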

\begin{proof}
  Suppose for contradiction that the claim is false and let $A\in\mathcal{T}$ be a counterexample that is minimal with respect to the column-order $<_{C}$. We choose $1<j,k<m$ to be maximal such that $a_{jm},a_{mk}\neq 0$. We may assume that $a_{mm}=0$ by \Cref{lem.bottom.right.entry}. Now, by \myCref{lem.critical.rel.coef}[lem.critical.rel.coef.item.3] we have $z_{j,m}(A)+z_{m,k}(A)=1$ and so the relation $Z_{j,m}(A)+Z_{m,k}(A)$ gives:
    \[
      h[A]=\sum_{\substack{i>j\\l<m}}a_{il}h[\lowsmallsup{B}{[i,l]}]+\sum_{\substack{i<m\\l>k}}a_{il}h[\lowsmallsup{D}{[i,l]}],
    \]
  where $\lowsmallsup{B}{[i,l]}\coloneqq\rowexop{A}{i}{j}{m}{l}$ for $i>j$, $l<m$ with $a_{il}\not\equiv 0$, and $\lowsmallsup{D}{[i,l]}\coloneqq\rowexop{A}{i}{m}{k}{l}$ for $i<m$, $l>k$ with $a_{il}\not\equiv 0$.

  Suppose that $i>j$, $l<m$ are such that $a_{il}\not\equiv 0$, and consider the matrix $\lowsmallsup{B}{[i,l]}$. If $i=m$, then $\smallsup{b}[mm]{[m,l]}\neq 0$ and so $h[\lowsmallsup{B}{[m,l]}]=0$ by \Cref{lem.bottom.right.entry}. On the other hand, if $i<m$ then $\smallsup{b}[im]{[i,l]},\smallsup{b}[mk]{[i,l]}\neq 0$, and notice also that $\lowsmallsup{B}{[i,l]}<_{C}A$ by \Cref{rem.order}. Therefore, by minimality of $A$, we have that $h[\lowsmallsup{B}{[i,l]}]=0$. Similarly, one may show that $h[\lowsmallsup{D}{[i,l]}]=0$ for $i<m$, $l>k$ with $a_{il}\not\equiv 0$, and so we deduce that $h[A]=0$.
\end{proof}

\begin{Definition}\label{def.row.col.set}
  We define the sets:
    \begin{enumerate}[label=(\roman*), font=\normalfont, ref=(\roman*)]
      \item\label{def.row.col.set.item.1}$\mathcal{TR}\coloneqq\{A\in\mathcal{T}\mid a_{i1}=1\settext{for}1\leq i<m,\settext{and}a_{mk}=0\settext{for}1<k\leq m\}$.
      \item\label{def.row.col.set.item.2}$\mathcal{TC}\coloneqq\{A\in\mathcal{T}\mid a_{1k}=1\settext{for}1\leq k<m,\settext{and}a_{im}=0\settext{for}1<i\leq m\}$.
    \end{enumerate}
\end{Definition}

\begin{Lemma}\label{lem.rim}
  Let $A\in\mathcal{T}$ and suppose that $A\not\in\mathcal{TR}\cup\mathcal{TC}$. Then $h[A]=0$.
\end{Lemma}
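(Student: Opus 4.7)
The proof will follow a minimal counterexample argument in the spirit of \Cref{lem.outside.rim}. First, by \Cref{lem.bottom.right.entry} I may assume $a_{mm}=0$. By \Cref{lem.outside.rim}, I may also assume that one of the following holds: (A) $a_{im}=0$ for all $1<i\leq m$, forcing $a_{1m}=b$ from the column-sum condition; or (B) $a_{mk}=0$ for all $1<k\leq m$, forcing $a_{m1}=b'$ from the row-sum condition. The involution $\lambda\leftrightarrow\lambda'$ of \Cref{rem.swap}, combined with \myCref{lem.hom.rel}[lem.hom.rel.item.3], interchanges Cases (A) and (B), so I shall only deal with Case (A). In this case, $A\notin\mathcal{TC}$ is equivalent to the first row not being $(1,\ldots,1,b)$; since $a_{11},\ldots,a_{1,m-1}$ are non-negative integers summing to $m-1$, there must exist $k^\star<m$ with $a_{1k^\star}\geq 2$ and $k^\dagger<m$ with $a_{1k^\dagger}=0$.

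Next I induct on the column order $<_{C}$, restricted to matrices lying in Case (A) but outside $\mathcal{TC}$. Let $A$ be minimal in $<_{C}$ among such matrices with $h[A]\neq 0$. The strategy is to apply a suitable combination of the critical relations $Z_{j,k}(A)$ of \Cref{lem.critical.relation} to obtain an identity $h[A]=\sum_{B<_{C}A}\,c_B\,h[B]$. Every $B$ appearing on the right belongs to one of three types: (i) $B$ still sits in Case (A) outside $\mathcal{TC}$, and $h[B]=0$ by the inductive hypothesis; (ii) $B$ satisfies the hypotheses of \Cref{lem.bottom.right.entry} or \Cref{lem.outside.rim}, giving $h[B]=0$ directly; or (iii) $B\in\mathcal{TC}$, in which case one must verify that the coefficient $c_B$ vanishes in $\k$.

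The selection of $(j,k)$ is guided by the formulas of \Cref{lem.critical.rel.coef}; for instance, $z_{1,k^\star}(A)$ and $z_{m,k}(A)$ have explicit expressions in terms of partial row/column sums, and one can arrange for their sum (or for a single one of them) to equal $1\in\k$ thanks to the standing parity hypothesis $a-m\equiv b\pmod 2$, exactly as in the combination $Z_{j,m}(A)+Z_{m,k}(A)$ used in the proof of \Cref{lem.outside.rim}. This is also where the case split into (A)-only versus (A)$\cap$(B) will matter: if some $a_{m k_0}\neq 0$ for $1<k_0<m$, the relation $Z_{m,k_0}(A)$ is available and behaves well, whereas if the bottom row is concentrated in column $1$, one must instead invoke a $Z_{1,k}(A)$ relation with $a_{1k}\neq 0$ and check that the resulting $B$'s avoid $\mathcal{TC}$.

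The main obstacle, as always in this circle of arguments, is the analysis of type (iii) contributions. The matrices $B\in\mathcal{TC}$ that appear are precisely those obtained from $A$ by a single entry-swap that repairs the defect in the first row, producing a canonical row $(1,\ldots,1,b)$. Ensuring their cancellation requires bookkeeping the coefficients $a_{il}$ appearing in the chosen combination of $Z$-relations and using the explicit parity values from \myCref{lem.critical.rel.coef}[lem.critical.rel.coef.item.3]--\labelcref{lem.critical.rel.coef.item.5}; heuristically this is where the parity $a-m\equiv b$ is used in a substantive way, beyond the role it already played in \Cref{lem.outside.rim}. Once the type (iii) cancellation is verified, the induction closes, $h[A]=0$, contradicting minimality, and the lemma follows.
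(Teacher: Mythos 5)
There is a genuine gap: your write-up is a strategy outline whose essential steps are explicitly deferred rather than carried out. The choice of which $Z$-relations to combine, the verification that the resulting left-hand coefficient is $1\in\k$, and above all the claimed cancellation of the ``type (iii)'' contributions $B\in\mathcal{TC}$ are precisely the hard parts of the argument, and none of them is done (``one must verify that the coefficient $c_B$ vanishes'', ``heuristically this is where the parity \dots is used''). In the overlap sub-case (A)$\cap$(B), where the bottom row is concentrated in column $1$, you acknowledge that the plan changes but do not say what replaces it; this is exactly the sub-case where a first-row defect alone is not enough and one must also exploit $A\not\in\mathcal{TR}$. As it stands the lemma is not proved.

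It is worth noting that the difficulties you anticipate can be avoided altogether, which is what the paper's proof does. Instead of $Z$-relations and an induction on $<_{C}$, one uses single relations pivoted on a \emph{zero} entry of $A$: if $a_{mk}\neq 0$ for some $1<k<m$, then $A\not\in\mathcal{TC}$ supplies an $l<m$ with $a_{1l}=0$, and $C_{l,m}^{1}(A)$ reads $h[A]=\sum_{j>1}a_{jl}h[B^{[j]}]$ with every $B^{[j]}$ having a nonzero entry in position $(j,m)$, hence killed by \Cref{lem.bottom.right.entry} or \Cref{lem.outside.rim}; if instead $a_{mk}=0$ for all $k>1$, then $A\not\in\mathcal{TR}$ supplies a $j<m$ with $a_{j1}=0$, and $R_{j,m}^{1}(A)$ produces matrices with a nonzero entry in row $m$, the survivors of which are finished off by one further relation $C_{k,m}^{1}$ pivoted on a zero first-row entry coming from $A\not\in\mathcal{TC}$. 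Because the pivot entry is zero, the left-hand coefficient is automatically $1$, and because every matrix produced acquires a forbidden entry in the last row or column, no $B\in\mathcal{TC}$ ever appears with nonzero coefficient --- so there is no cancellation to verify and no additional use of the parity hypothesis. If you want to rescue your approach you would need to actually exhibit the combination of relations and prove the type (iii) cancellation; the simpler route is to abandon the $Z$-relations here and argue as above.
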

\begin{proof}
  By \Cref{lem.bottom.right.entry} we may assume that $a_{mm}=0$. Suppose that $a_{mk}\neq 0$ for some $k$ with $1<k<m$. Then, by \Cref{lem.outside.rim}, we may assume that $a_{jm}=0$ for $1<j<m$. But then $a_{1m}=b$ and so $\sum_{l<m}a_{1l}=m-1$. Since $A\not\in\mathcal{TC}$ we deduce that there exists some $1\leq l<m$ with $a_{1l}=0$. Now, the relation $C_{l,m}^{1}(A)$ gives that $h[A]=\sum_{j>1}a_{jl}h[\lowsmallsup{B}{[j]}]$ where $\lowsmallsup{B}{[j]}\coloneqq\colexop{A}{l}{m}{1}{j}$ for $j>1$ with $a_{jl}\not\equiv 0$. Suppose that $j>1$ is such that $a_{jl}\not\equiv 0$. If $j=m$ then $\smallsup{b}[mm]{[m]}\neq 0$ and so $h[\lowsmallsup{B}{[m]}]=0$ by \Cref{lem.bottom.right.entry}. Moreover, for $1<j<m$ we have that $\smallsup{b}[mk]{[j]},\smallsup{b}[jm]{[j]}\neq 0$ and so $h[\lowsmallsup{B}{[j]}]=0$ by \Cref{lem.outside.rim}. Therefore, we deduce that $h[A]=0$.

  Hence, we may assume that $a_{mk}=0$ for all $1<k\leq m$ and so it follows that \mbox{$a_{m1}=a-m+1$} and that $\sum_{j<m}a_{j1}=m-1$. However, since $A\not\in\mathcal{TR}$ we must have that $a_{j1}=0$ for some $j$ with $1\leq j<m$. Now, the relation $R_{j,m}^{1}(A)$ gives $h[A]=\sum_{l>1}a_{jl}h[\lowsmallsup{D}{[l]}]$ where $\lowsmallsup{D}{[l]}\coloneqq\rowexop{A}{j}{m}{1}{l}$ for $l>1$ with $a_{jl}\not\equiv 0$. Suppose that $l>1$ is such that $a_{jl}\not\equiv 0$. If $l=m$, then $\smallsup{d}[mm]{[m]}\neq 0$ and so $h[\lowsmallsup{D}{[m]}]=0$ by \Cref{lem.bottom.right.entry}. On the other hand, if $1<l<m$ then $\smallsup{d}[\smash[t]{ml}]{[l]}\neq 0$. Now, if $\smallsup{d}[um]{[l]}\neq 0$ for some $1<u<m$, then $h[\lowsmallsup{D}{[l]}]=0$ by \Cref{lem.outside.rim}. Hence, we may assume that $\smallsup{d}[um]{[l]}=0$ for all $1<u<m$ and so we deduce that $\smallsup{d}[1m]{[l]}=a_{1m}=b$. Since $A\not\in\mathcal{TC}$ we have that there exists some $1\leq k<m$ with $a_{1k}=0$ and hence $\smallsup{d}[1k]{[l]}=0$. Then, the relation $C_{k,m}^{1}(\lowsmallsup{D}{[l]})$ expresses $h[\lowsmallsup{D}{[l]}]$ as a linear combination of $h[F]$s where either $f_{mm}\neq 0$, or $f_{ml}\neq 0$ and $f_{vm}\neq 0$ for some $v$ with $1<v<m$. Once again, \Cref{lem.bottom.right.entry} and \Cref{lem.outside.rim} give that $h[F]=0$ for all such $F$ and so $h[\lowsmallsup{D}{[l]}]=0$. Hence $h[A]=0$.
\end{proof}

\begin{Definition}\label{def.filtr}
  We shall require some additional notation that we shall introduce here:
    \begin{enumerate}[label=(\roman*), font=\normalfont, ref=(\roman*)]
      \item\label{def.filtr.item.1}In order to assist with counting in reverse, set $\tau(i)\coloneqq m-(i-1)$ for $1\leq i\leq m$.
      \item\label{def.filtr.item.2}For $1<i<m$, we define:
        \[ \mathcal{TR}_{i}\coloneqq\{A\in\mathcal{TR}\mid\text{the}\ \tau(j)\text{th-row of}\ A\settext{contains}j\settext{odd entries for}1<j\leq i\}. \]
      \item\label{def.filtr.item.3}For $1<i<m$, we define $\overline{\mathcal{TR}}_{i}\coloneqq\mathcal{TR}_{i}\setminus\mathcal{TR}_{i+1}$, where we set $\mathcal{TR}_{m}\coloneqq\varnothing$.
    \end{enumerate}
\end{Definition}

\begin{Remark}\label{rem.01}
  Let $A\in\mathcal{T}$. Recall that $\sum_{l}a_{\tau(i)l}=i$ for $1<i<m$. Therefore, if $A\in\mathcal{TR}_{i}$ for some $1<i<m$, then the $\tau(j)$th-row of $A$ consists entirely of ones and zeros for all $1<j\leq i$.
\end{Remark}

\begin{Definition}\label{def.ind.tool}
  Let $1<i<m$ and $A\in\overline{\mathcal{TR}}_{i}$. Then:
    \begin{enumerate}[label=(\roman*), font=\normalfont, ref=(\roman*)]
      \item\label{def.ind.tool.item.1}We set $\mathcal{K}_{A}\coloneqq\{2\leq k\leq i\mid a_{uk}=1\settext{for}\tau(i)\leq u\leq\tau(k)\}$.
      \item\label{def.ind.tool.item.2}We set $k_{A}\coloneqq\min\{2\leq k\leq i+1\mid k\not\in\mathcal{K}_{A}\}$.
      \item\label{def.ind.tool.item.3}If $k_{A}\leq i$, we set $j_{A}\coloneqq\min\{k_{A}\leq j\leq i\mid a_{\tau(j)k_{A}}=0\}$.
      \item\label{def.ind.tool.item.4}If $k_{A}\leq i$ and $k_{A}\leq j\leq i$, we denote by $w^{j}(A)\coloneqq(w_{1}^{j}(A),w_{2}^{j}(A),\ldots)$ the decreasing sequence of column-indices within the final $\tau(k_{A})$ columns of $A$ that satisfy $a_{\tau(j)w^{j}_{s}(A)}=1$ for $s\geq 1$.
      \end{enumerate}
\end{Definition}

Notice that the sequence $w^{j}(A)$ has $j-k_{A}+1$ terms.

\begin{Example}\label{exam.tab1}
  We have $k_{A}=4$, $j_{A}=4$, and $w^{5}(A)=(7,5)$, where:
    \[
      A\coloneqq
        \begin{tabular}{|c|c|c|c|c|c|c|c|c|c|c|}
          \hline
            $1$ & $\cdot$ & $\cdot$ & $\cdot$ & $\cdot$ & $\cdot$ & $\cdot$ & $\cdot$ & $\cdot$ \\
          \hline
            $\vdots$ & $\vdots$ & $\vdots$ & $\vdots$ & $\vdots$ & $\vdots$ & $\vdots$ & $\vdots$ & $\vdots$ \\
          \hline
            $1$ & $1$ & $1$ & $1$ & $1$ & $2$ & $0$ & $0$ & $0$ \\
          \hline
            $1$ & $1$ & $1$ & $0$ & $1$ & $0$ & $1$ & $0$ & $0$ \\
          \hline
            $1$ & $1$ & $1$ & $0$ & $1$ & $0$ & $0$ & $0$ & $0$ \\
          \hline
            $1$ & $1$ & $1$ & $0$ & $0$ & $0$ & $0$ & $0$ & $0$ \\
          \hline
            $1$ & $1$ & $0$ & $0$ & $0$ & $0$ & $0$ & $0$ & $0$ \\
          \hline
            $a-m+1$ &0 & $0$ & $0$ & $0$ & $0$ & $0$ & $0$ & $0$ \\
          \hline
        \end{tabular}
      \in\overline{\mathcal{TR}}_{5}.
    \]
\end{Example}

\begin{Lemma}\label{lem.tech.R}
  Let $2<i<m$ and let $A\in\overline{\mathcal{TR}}_{i}$ with $k_{A}\leq i$. Suppose that there exists some index $k$ with $k_{A}<k\leq i$ such that $w^{j}_{t}(A)=w^{j-1}_{\smash[t]{t-1}}(A)$ for all $k_{A}<j\leq k$ and all even $t$. Then for $l\geq k_{A}$, $k_{A}\leq j\leq k$, we have $\sum_{u\geq\tau(j)}a_{ul}\equiv 1$ if and only if $l=w^{j}_{s}(A)$ for some odd $s$.
\end{Lemma}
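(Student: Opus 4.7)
The plan is to argue by induction on $j$, running from $j = k_{A}$ up to $j = k$. The core algebraic observation is that the hypothesis $w^{j}_{t}(A) = w^{j-1}_{t-1}(A)$ for even $t$ upgrades to the set equality $\{w^{j}_{t}(A) : t\text{ even}\} = \{w^{j-1}_{s}(A) : s\text{ odd}\}$, and this will drive the parity calculation in the inductive step.

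For the base case $j = k_{A}$, I would first establish a structural claim: for every $1 \leq j' < k_{A}$, the row $\tau(j')$ of $A$ has ones exactly in columns $1, 2, \ldots, j'$. Indeed, $a_{\tau(j'), 1} = 1$ holds by $A \in \mathcal{TR}$, the entries $a_{\tau(j'), k} = 1$ for $2 \leq k \leq j'$ follow from $\{2, \ldots, k_{A} - 1\} \subseteq \mathcal{K}_{A}$ (by minimality of $k_{A}$), and the row sum $j'$ from $A \in \mathcal{TR}_{i}$ together with \Cref{rem.01} leaves no room for further ones; in particular $a_{\tau(j'), l} = 0$ for $l \geq k_{A}$. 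Combined with $a_{m,l} = 0$ for $l > 1$ (again from $\mathcal{TR}$), this reduces $\sum_{u \geq \tau(k_{A})} a_{ul}$ to the single entry $a_{\tau(k_{A}), l}$ for $l \geq k_{A}$. Since row $\tau(k_{A})$ is binary with $k_{A} - 1$ ones in the first $k_{A} - 1$ columns and therefore exactly one further $1$ in columns $\geq k_{A}$, namely at $w^{k_{A}}_{1}(A)$, the conclusion at $j = k_{A}$ follows (with unique odd index $s = 1$).

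For the inductive step, fix $k_{A} < j \leq k$ and assume the claim for $j - 1$. Using $\tau(j-1) = \tau(j) + 1$, I split
\[ \sum_{u \geq \tau(j)} a_{ul} = a_{\tau(j), l} + \sum_{u \geq \tau(j-1)} a_{ul}. \]
By \Cref{rem.01} the row $\tau(j)$ is binary (as $j \leq k \leq i$), so $a_{\tau(j),l} \equiv 1$ iff $l \in w^{j}(A)$ for $l \geq k_{A}$, and by the inductive hypothesis the second summand is odd iff $l$ sits at an odd position of $w^{j-1}(A)$. The hypothesis $w^{j}_{t}(A) = w^{j-1}_{t-1}(A)$ for even $t$ gives an injection $\{w^{j}_{t} : t \text{ even}\} \hookrightarrow \{w^{j-1}_{s} : s \text{ odd}\}$; a routine count shows that both sets have the same cardinality $\lfloor (j - k_{A} + 1)/2 \rfloor$, so this injection is an equality. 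Consequently, $l$ lies at an odd position of $w^{j-1}(A)$ iff $l$ lies at an even position of $w^{j}(A)$, and the XOR of the two binary conditions takes the value $1$ precisely at those $l$ in an odd position of $w^{j}(A)$, completing the induction.

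The main obstacle is purely bookkeeping: correctly handling the reversed indexing $\tau$, assembling the constraints from $\mathcal{TR}$, $\mathcal{K}_{A}$, and $\mathcal{TR}_{i}$ to pin down the rows at and above $\tau(k_{A})$, and tracking the interleaving of positions between $w^{j}$ and $w^{j-1}$; no new algebraic ideas beyond the definitions collected in \Cref{def.row.col.set}, \Cref{def.filtr}, and \Cref{def.ind.tool}, together with the stated hypothesis, are required.
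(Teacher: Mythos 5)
Your proof is correct and follows essentially the same strategy as the paper's: induction on $j$ starting from $j=k_{A}$, where the structure of the rows $\tau(j')$ with $j'<k_{A}$ (forced by $A\in\mathcal{TR}$, the minimality of $k_{A}$, and the $0/1$ nature of those rows) collapses the column sum to the single entry $a_{\tau(k_{A}),l}$, and where the interleaving hypothesis drives the inductive step. Your packaging of that step as one parity computation via the set identity $\{w^{j}_{t}(A):t\text{ even}\}=\{w^{j-1}_{s}(A):s\text{ odd}\}$ is a clean equivalent of the paper's two contradiction arguments (the only cosmetic slip being the inclusion of $j'=1$ in your structural claim about rows above $\tau(k_{A})$, which you anyway handle separately via $a_{ml}=0$ for $l>1$).
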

\begin{proof}
  We proceed by induction on $j$. The case $j=k_{A}$ is clear and so we may assume that $j>k_{A}$ and that the claim holds for all smaller values of $j$ in the given range. Let $l\geq k_{A}$ and suppose that $\sum_{u\geq\tau(j)}a_{ul}\equiv 1$. Suppose, for the moment, that $a_{\tau(j)l}=0$. Then $\sum_{\smash[b]{u\geq\tau(j)}}a_{ul}=\sum_{u\geq\tau(j-1)}a_{ul}$, and so $l=w^{\smash[t]{j-1}}_{s}(A)$ for some odd $s$ by the inductive hypothesis. However, $w^{j}_{\smash[t]{s+1}}(A)=w^{j-1}_{s}(A)=l$ and so $a_{\tau(j)l}=1$, contradicting that \mbox{$a_{\tau(j)l}=0$}. Hence, $a_{\tau(j)l}=1$ and so $l=w^{j}_{s}(A)$ for some $s$. Moreover, $\sum_{u\geq\tau(j)}a_{ul}\equiv 1$ if and only if $\sum_{\smash[b]{u\geq\tau(j-1)}}a_{ul}\equiv 0$ and so by the inductive hypothesis $l\neq w^{j-1}_{\smash[t]{s'}}(A)$ for any odd $s'$. Now, if $s$ is even then $w^{j}_{s}(A)=w^{j-1}_{\smash[t]{s-1}}(A)$, leading to a contradiction. Hence, $s$ must be odd. Conversely, suppose that $l=w^{j}_{s}(A)$ for some odd $s$, and suppose, for the sake of contradiction, that $\sum_{\smash[b]{u\geq\tau(j)}}a_{ul}\equiv 0$. Then, there exists some $k_{A}\leq j'<j$ such that $a_{\tau(j')l}=1$, and we choose $j'$ to be maximal with this property. Therefore, $a_{ul}=0$ for $\tau(j)<u<\tau(j')$ and $\sum_{\smash[b]{u\geq\tau(j')}}a_{ul}\equiv 1$. Then, by the inductive hypothesis, $l=w^{j'}_{\smash[t]{s'}}(A)$ for some odd $s'$. But then $w^{j'+1}_{\smash[t]{s'+1}}(A)=w^{j'}_{\smash[t]{s'}}(A)=l$, by our assumption, and so $a_{\tau(j'+1)l}=1$. Now, by the maximality of $j'$, we must have $j'+1=j$. Thus, $l=w^{j'+1}_{\smash[t]{s'+1}}(A)=w^{j}_{\smash[t]{s'+1}}(A)=w^{j}_{s}(A)$ and so $s'+1=s$, which is impossible since $s'$ and $s$ are both odd. Hence $\sum_{u\geq\tau(j)}a_{ul}\equiv 1$, and so we are done.
\end{proof}

\begin{Lemma}\label{lem.tech.R1}
  Let $2<i<m$ and let $A\in\overline{\mathcal{TR}}_{i}$ with $k_{A}\leq i$. Suppose that $z_{\tau(j),l}(A)=0$ for all $k_{A}\leq j\leq i$, $k_{A}\leq l<m$ with $a_{\tau(j)l}=1$. Then $w^{j}_{s}(A)=w^{j-1}_{\smash[t]{s-1}}(A)$ for $k_{A}<j\leq i$ and even $s$ with $s\leq j-k_{A}+1$.
\end{Lemma}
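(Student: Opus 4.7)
The plan is to prove, by induction on $k$ with $k_A \leq k \leq i$, the statement $P_k$: $w^j_s(A) = w^{j-1}_{s-1}(A)$ for all $k_A < j \leq k$ and all even $s$ with $s \leq j - k_A + 1$. The final goal is $P_i$; the base case $P_{k_A}$ is vacuous, so the bulk of the argument is the inductive step $P_k \Rightarrow P_{k+1}$ (for $k + 1 \leq i$), which reduces to establishing the desired equality for $j = k + 1$.

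Under $P_k$, the hypothesis of \Cref{lem.tech.R} is satisfied (with the parameter $k$ of that lemma taken to be the current $k$), so for each $l \geq k_A$ and each $k_A \leq j \leq k$, one has $\sum_{u \geq \tau(j)} a_{ul} \equiv 1$ if and only if $l = w^j_{s'}(A)$ for some odd $s'$. I then fix $s$ with $2 \leq s \leq k - k_A + 2$ and set $l \coloneqq w^{k+1}_s(A)$. Since $s \geq 2$, we have $l < w^{k+1}_1(A) \leq m$, so $l < m$ and the hypothesis $z_{\tau(k+1), l}(A) = 0$ is available. By \myCref{lem.critical.rel.coef}[lem.critical.rel.coef.item.2], $z_{\tau(k+1), l}(A) = \sum_{u \geq \tau(k)} a_{ul} + \sum_{l' > l} a_{\tau(k+1), l'}$. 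Since $A \in \mathcal{TR}_i$ and $k + 1 \leq i$, \Cref{rem.01} forces row $\tau(k+1)$ to consist of $0$s and $1$s; the $1$s in that row at column-indices exceeding $l \geq k_A$ are exactly $w^{k+1}_1(A), \ldots, w^{k+1}_{s-1}(A)$, so $\sum_{l' > l} a_{\tau(k+1), l'} = s - 1$. Combining, $\sum_{u \geq \tau(k)} a_{u, w^{k+1}_s(A)} \equiv s - 1$.

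When $s$ is even the left-hand side is $\equiv 1$, so the characterization above gives $w^{k+1}_s(A) = w^k_{s'}(A)$ for some odd $s'$. The map $s \mapsto s'$ thus injects the set of even-indexed entries of $w^{k+1}(A)$ into the set of odd-indexed entries of $w^k(A)$. A direct count shows both sets have $\lfloor (k - k_A + 2)/2 \rfloor$ elements, so the injection is a bijection; since both $w^{k+1}(A)$ and $w^k(A)$ are strictly decreasing, order-matching forces $w^{k+1}_{2t}(A) = w^k_{2t-1}(A)$ for $1 \leq 2t \leq k - k_A + 2$, which is exactly $P_{k+1}$.

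The key step I anticipate is the parity computation $\sum_{u \geq \tau(k)} a_{u, w^{k+1}_s(A)} \equiv s - 1$: this requires combining the hypothesis on $z_{\tau(k+1), l}$ with the $0$/$1$ structure of row $\tau(k+1)$ (via \Cref{rem.01}) and the precise count of ones to the right of $w^{k+1}_s(A)$. Once this parity relation is in place, \Cref{lem.tech.R} together with the cardinality-plus-monotonicity argument yields the desired pairing in one stroke.
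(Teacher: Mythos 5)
Your proposal is correct and follows essentially the same route as the paper's proof: an induction (yours on the upper endpoint $k$, the paper's on $j$, which amounts to the same thing), the parity computation $\sum_{u\geq\tau(k)}a_{ul}\equiv s-1$ extracted from $z_{\tau(k+1),l}(A)=0$ via \myCref{lem.critical.rel.coef}[lem.critical.rel.coef.item.2], an appeal to \Cref{lem.tech.R} to identify $l$ with some $w^{k}_{s'}(A)$ for odd $s'$, and the same count-plus-monotonicity argument to force $s'=s-1$. The only cosmetic difference is that the paper treats $j=k_{A}+1$ as a separate base case, whereas you absorb it into the general step via the $j=k_{A}$ base case inside \Cref{lem.tech.R}.
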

\begin{proof}
  We fix $i$ and we proceed by induction on $j$, with the base case being $j=k_{A}+1$. Here $w_{\smash[t]{1}}^{\tau(j)}(A)=(w_{\smash[t]{1}}^{\tau(j)}(A),w_{\smash[t]{2}}^{\tau(j)}(A))$ and for $w\coloneqq w_{\smash[t]{2}}^{\tau(j)}(A)$ we have $z_{\tau(j),w}(A)=0$. Now, by \myCref{lem.critical.rel.coef}[lem.critical.rel.coef.item.2] we have $z_{\tau(j),w}(A)=\sum_{u>\tau(j)}a_{uw}+\sum_{v>w}a_{\tau(j)v}=a_{\tau(j-1)w}+1$. Therefore, the entry $a_{\tau(j-1)w}$ is odd and so $w=w_{\smash[t]{1}}^{k_{A}}(A)$ as required. Suppose now that $k_{A}+1<j\leq i$ and that the claim holds for smaller values of $j$ in the given range.

  Suppose that $s$ is even and set $l\coloneqq w^{j}_{s}(A)$. Then $\sum_{u>\tau(j)}a_{ul}+s-1\equiv 0$ by \myCref{lem.critical.rel.coef}[lem.critical.rel.coef.item.2] since $z_{\tau(j),l}(A)=0$. Therefore, $\sum_{u\geq\tau(j-1)}a_{ul}\equiv 1$ and so by the inductive hypothesis, \Cref{lem.tech.R} applies and gives that $l=w^{j-1}_{\smash[t]{s'}}(A)$ for some odd $s'$ with $s'\leq j-k_{A}$. Now, the sequence $w^{j}(A)$ has exactly one extra term compared to $w^{j-1}(A)$ and so the number of even indices in $w^{j}(A)$ equals the number of odd indices in $w^{j-1}(A)$. It follows that $s'=s-1$ and so we are done.
\end{proof}

\begin{Lemma}\label{lem.k.lem}
  Let $1<i<m$ and let $A\in\overline{\mathcal{TR}}_{i}$ with $k_{A}\leq i$. Suppose that \mbox{$w_{1}^{j}(A)>w_{1}^{j-1}(A)$} for all $j_{A}<j\leq i$. Then we may express $h[A]$ as a linear combination of $h[B]$s for some $B\in\mathcal{T}$ where either:
    \begin{enumerate}[label=(\roman*), font=\normalfont, ref=(\roman*)]
      \item\label{lem.k.lem.item.1}$B\in\overline{\mathcal{TR}}_{i'}$ for some $i'<i$,
      \item\label{lem.k.lem.item.2}$B\in\overline{\mathcal{TR}}_{i}$ with $k_{B}>k_{A}$,
      \item\label{lem.k.lem.item.3}$B\in\overline{\mathcal{TR}}_{i}$ with $k_{B}=k_{A}$ and $B<_{C}A$, which is witnessed within the final $\tau(w_{1}^{\tiniersub{j}{A}}(A))$ columns of $A$ and $B$.
    \end{enumerate}
  Moreover, if $A\not\in\mathcal{TC}$ then $B\not\in\mathcal{TC}$ for all such $B$ listed above.
\end{Lemma}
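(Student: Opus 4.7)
The plan is to isolate $h[A]$ by applying a single column-exchange relation from \Cref{lem.end.red} whose coefficient of $h[A]$ equals $1$. A natural choice is the relation \hyperref[eq.end.red.C]{$C_{k_A, k_0}^{\tau(j_A)}(A)$} with $k_0 \coloneqq w_1^{j_A}(A)$, the rightmost column in which row $\tau(j_A)$ has a $1$ among columns $\geq k_A$; note that $k_0 > k_A$ since $a_{\tau(j_A), k_A} = 0$ by the definition of $j_A$. Because $a_{\tau(j_A), k_0} = 1$, the relation is valid, and its coefficient of $h[A]$ is $a_{\tau(j_A), k_A} + 1 = 1$, giving
\[
  h[A] = \sum_{l \neq \tau(j_A)} a_{l, k_A}\, h\bigl[\colexop{A}{k_A}{k_0}{\tau(j_A)}{l}\bigr].
\]
Each summand is $h[B]$ for the matrix $B \coloneqq \colexop{A}{k_A}{k_0}{\tau(j_A)}{l}$ which differs from $A$ only at the four positions $(\tau(j_A), k_A)$, $(\tau(j_A), k_0)$, $(l, k_A)$, $(l, k_0)$.

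The next step is to classify each such $B$ according to where $l$ lies. Rows $\tau(j)$ with $1 < j \leq i$ and $\tau(j) \notin \{\tau(j_A), l\}$ remain intact in $B$, so whether $B$ stays in $\overline{\mathcal{TR}}_{\bullet}$ and the value of $k_B$ are decided entirely by the two altered rows. I would split into the sub-cases: (a) $l$ lies in the top block $1 \leq l < \tau(i)$; (b) $l = \tau(j)$ for some $k_A \leq j < j_A$, in which case $a_{l, k_A} = 1$ automatically by the minimality of $j_A$; (c) $l = \tau(j)$ for some $j_A < j \leq i$ with $a_{l, k_A}$ odd (note $l = m$ contributes nothing, since $a_{m, k_A} = 0$ by $A \in \mathcal{TR}$). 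In each sub-case one inspects the four altered entries to determine whether $B$ drops into $\overline{\mathcal{TR}}_{i'}$ for some $i' < i$ (case (i)), has $k_B$ strictly greater than $k_A$ (case (ii)), or remains in $\overline{\mathcal{TR}}_i$ with $k_B = k_A$ (case (iii)).

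The main obstacle will be case (iii): showing both that no new obstruction appears in columns $2, \dots, k_A - 1$ (so that $k_B = k_A$) and that $B <_C A$ with the witnessing discrepancy occurring within the final $\tau(k_0)$ columns. This is precisely where the hypothesis $w_1^j(A) > w_1^{j-1}(A)$ for $j_A < j \leq i$ is invoked: it constrains the placement of ones in rows $\tau(j)$ with $j > j_A$ so that the column exchange cannot introduce a discrepancy in any column strictly less than $k_0$, and so that membership of the columns $2, \dots, k_A - 1$ in $\mathcal{K}_B$ is inherited from $\mathcal{K}_A$. The preservation of the $\mathcal{TC}$-condition is then a direct verification from the list of altered entries, with separate attention in the edge case $l = 1$ where a potential $\mathcal{TC}$-witness of $A$ could lie at position $(1, k_A)$ or $(1, k_0)$.
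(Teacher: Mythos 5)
Your opening move is exactly the paper's: the relation $C_{k_A,\,k_0}^{\tau(j_A)}(A)$ with $k_0=w_1^{j_A}(A)$ isolates $h[A]$ with coefficient $a_{\tau(j_A),k_A}+1=1$, and the resulting matrices $B$ differ from $A$ only in columns $k_A$ and $k_0$. The gap is in your treatment of the ``main obstacle''. You assert that the hypothesis $w_1^{j}(A)>w_1^{j-1}(A)$ forces every surviving $B$ with $k_B=k_A$ to satisfy $B<_{C}A$. This is false, and the failure is about the \emph{direction} of the discrepancy, not its location. Take $l$ strictly above row $\tau(j_A)$ (i.e.\ $l=\tau(j')$ with $j'>j_A$, or $l$ in the top block $l<\tau(i)$) with $a_{l,k_A}$ odd and $a_{l,k_0}$ even. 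Such terms genuinely occur: the hypothesis only says that row $\tau(j')$ has a one in some column \emph{beyond} $k_0$; it does not control $a_{\tau(j'),k_A}$ or $a_{\tau(j'),k_0}$. For such $l$ the exchange sends $a_{\tau(j_A),k_0}$ from $1$ to $0$ and raises $a_{l,k_0}$ by $1$; reading column $k_0$ top to bottom, the first difference is at row $l$, where $B$ has the \emph{larger} entry, so $A<_{C}B$. Meanwhile $B$ stays in $\overline{\mathcal{TR}}_{i}$ with $k_B=k_A$ (only $j_B>j_A$ changes). These $B$ fit none of (i)--(iii), so a single application of the relation cannot close the proof.

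The paper resolves exactly this by a descending induction on $j_A$ (from $j_A=i$ down): each problematic $B$ has $j_B>j_A$ and inherits the monotonicity hypothesis (since $A$ and $B$ agree in all columns $>k_0$, one has $w_1^{j}(B)=w_1^{j}(A)$ for $j>j_A$), so the lemma applies to $B$; the column-order witnesses it produces for $B$ live in the final $\tau(w_1^{j_B}(B))$ columns, where $w_1^{j_B}(B)>k_0$, and $A$ and $B$ agree there, so those witnesses transfer to $A$ within the final $\tau(k_0)$ columns. That recursion is the essential content of the lemma and is what the monotonicity hypothesis is really for; your proposal has no substitute for it. (Two smaller points you should also make explicit: the rows $l=\tau(j)$ with $1<j<k_A$ contribute nothing because $k_A-1\in\mathcal{K}_A$ forces $a_{\tau(j),k_A}=0$ there; and for $l$ below $\tau(j_A)$ with $a_{l,k_0}$ odd, or $l=\tau(j')$ with $j'>j_A$ and $a_{l,k_0}$ odd, the row loses two odd entries and $B$ drops into some $\overline{\mathcal{TR}}_{i'}$ with $i'<i$, which is how case (i) arises.)
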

\begin{proof}
  To ease notation we set $u\coloneqq\tau(j_{A})>1$, $k\coloneqq k_{A}$, and $w\coloneqq w_{1}^{j_{A}}(A)$. Notice that $w>k$, and that $a_{uk}=0$ and $a_{uw}=1$. The relation $C_{k,w}^{u}(A)$ gives $h[A]=\sum_{l\neq u}a_{lk}h[\lowsmallsup{B}{[l]}]$ where $\lowsmallsup{B}{[l]}\coloneqq\colexop{A}{k}{w}{u}{l}$ for $l\neq u$ with $a_{lk}\not\equiv 0$. Let $l\neq u$ be such that $a_{lk}\not\equiv 0$, and let $\lowsmallsup{k}{[l]}\coloneqq k_{\lowsmallestsup{B}{[l]}}$, $\lowsmallsup{j}{[l]}\coloneqq j_{\lowsmallestsup{B}{[l]}}$, and $\lowsmallsup{w}{[l]}\coloneqq\lowsmallsup{w}[1]{\lowsmallestsup{j}{[l]}}(\lowsmallsup{B}{[l]})$. We shall proceed by induction on $j_{A}$, decreasing from $j_{A}=i$.

  Firstly, suppose that $j_{A}=i$. If $l>u$ and $a_{lw}\neq 0$, then $\lowsmallsup{B}{[l]}\in\overline{\mathcal{TR}}_{i'}$ for some $i'<i$, and so $\lowsmallsup{B}{[l]}$ is as described in \caseCref{lem.k.lem.item.1}. Now, if $l>u$ with $a_{lw}=0$, then $\lowsmallsup{k}{[l]}=k$, $\lowsmallsup{B}{[l]}<_{C}A$, and the final column in which $\lowsmallsup{B}{[l]}$ and $A$ differ is the $w$th-column. Hence, here $\lowsmallsup{B}{[l]}$ is as described in \caseCref{lem.k.lem.item.3}. On the other hand, if $l<u$, then $\lowsmallsup{k}{[l]}>k$ and $\lowsmallsup{B}{[l]}$ is as described in \caseCref{lem.k.lem.item.2}.

  Now, suppose that $j_{A}<i$ and that the claim holds for all $D\in\overline{\mathcal{TR}}_{i}$ with $j_{A}<j_{D}\leq i$. We split our consideration into steps:

  \underline{Step 1}: If $l>u$ and $a_{lw}\neq 0$, then $\lowsmallsup{B}{[l]}\in\overline{\mathcal{TR}}_{i'}$ for some $i'<i$, and so $\lowsmallsup{B}{[l]}$ is as described in \caseCref{lem.k.lem.item.1}. On the other hand, if $l>u$ and $a_{lw}=0$, then $\lowsmallsup{B}{[l]}\in\overline{\mathcal{TR}}_{i}$ with $\lowsmallsup{k}{[l]}=k$ and $\lowsmallsup{B}{[l]}<_{C}A$. Moreover, the final column in which $\lowsmallsup{B}{[l]}$ and $A$ differ in this case is the $w$th-column and so $\lowsmallsup{B}{[l]}$ is as described in \caseCref{lem.k.lem.item.3}.

  \underline{Step 2}: Now, if $\tau(i)\leq l<u$ with $a_{lw}\neq 0$. Then $\lowsmallsup{B}{[l]}\in\overline{\mathcal{TR}}_{m-l}$ with $m-l<i$ since $l\geq\tau(i)=m-i+1$, and so $\lowsmallsup{B}{[l]}$ is as described as in \caseCref{lem.k.lem.item.1}.

  \underline{Step 3}: On the other hand, if $\tau(i)\leq l<u$ and $a_{lw}=0$, then $\lowsmallsup{B}{[l]}\in\overline{\mathcal{TR}}_{i}$ with $\lowsmallsup{k}{[l]}=k$ and $\lowsmallsup{j}{[l]}>j_{A}$. Moreover, the final column in which $A$ and $B$ differ is the $w$th-column, and so $w_{1}^{j}(\lowsmallsup{B}{[l]})=w_{1}^{j}(A)$ for all $j_{A}<j\leq i$, and so in particular $w_{1}^{j}(\lowsmallsup{B}{[l]})>w_{1}^{j-1}(\lowsmallsup{B}{[l]})$ for each $\lowsmallsup{j}{[l]}<j\leq i$. Hence, by the inductive hypothesis, $\lowsmallsup{B}{[l]}$ must satisfy the claim, and so $h[\lowsmallsup{B}{[l]}]$ may be written as a linear combination of $h[D]$s for some $D\in\mathcal{T}$ where either:
    \begin{enumerate}[label=(\roman*), font=\normalfont, ref=(\roman*)]\setcounter{enumi}{3}
      \item\label{lem.k.lem.item.4}$D\in\overline{\mathcal{TR}}_{i'}$ for some $i'<i$,
      \item\label{lem.k.lem.item.5}$D\in\overline{\mathcal{TR}}_{i}$ with $k_{D}>\lowsmallsup{k}{[l]}$,
      \item\label{lem.k.lem.item.6}$D\in\overline{\mathcal{TR}}_{i}$ with $k_{D}=\lowsmallsup{k}{[l]}$ and $D<_{C}\lowsmallsup{B}{[l]}$, which is witnessed within the final $\tau(\lowsmallsup{w}{[l]})$ columns of $\lowsmallsup{B}{[l]}$ and $D$.
    \end{enumerate}
  Any such $D$ as in \labelcref{lem.k.lem.item.4} is as described in \caseCref{lem.k.lem.item.1}, whereas any such $D$ as in \labelcref{lem.k.lem.item.5} is as described in \caseCref{lem.k.lem.item.2} since $\lowsmallsup{k}{[l]}=k_{A}$. Now, notice that the final $\tau(\lowsmallsup{w}{[l]})$ columns of $A$ and $\lowsmallsup{B}{[l]}$ match since $\lowsmallsup{w}{[l]}>w$, and so any such $D$ as in \labelcref{lem.k.lem.item.6} also satisfies $D<_{C}A$ (witnessed within the final $\tau(w)$ columns of $A$ and $D$), and so is as described in \caseCref{lem.k.lem.item.3}.

  \underline{Step 4}: Finally, if $l<\tau(i)$, then $\lowsmallsup{B}{[l]}\in\overline{\mathcal{TR}}_{i}$. Moreover, if $a_{tk}=1$ for all $\tau(i)\leq t<\tau(j_{A})$, then $\lowsmallsup{k}{[l]}>k$ and so $\lowsmallsup{B}{[l]}$ is as described in \caseCref{lem.k.lem.item.2}. On the other hand, if $a_{tk}=0$ for some $t$ in this range, then $\lowsmallsup{k}{[l]}=k$ with $\lowsmallsup{j}{[l]}>j_{A}$ and then one may proceed as in Step 3 above.

  Now, suppose that $A\not\in\mathcal{TC}$ but $\lowsmallsup{B}{[l]}\in\mathcal{TC}$ for some $l\neq u$ with $a_{lk}\not\equiv 0$. Notice that this forces $l=1$ and $a_{lk}=2${,} which contradicts that $a_{lk}\not\equiv 0$. Hence if $A\not\in\mathcal{TC}$, then $\lowsmallsup{B}{[l]}\not\in\mathcal{TC}$ for all $l\neq u$ with $a_{lk}\not\equiv 0$. By applying this argument recursively, it follows that if $A\not\in\mathcal{TC}$, then all such $B$ produced by this procedure satisfy $B\not\in\mathcal{TC}$ as well.
\end{proof}

\begin{Lemma}\label{lem.k.lem2}
  Let $1<i<m-1$ and let $A\in\overline{\mathcal{TR}}_{i}$ with $k_{A}=i+1$. Then we may express $h[A]$ as a linear combination of $h[B]$s for some $B\in\mathcal{T}$ where either:
    \begin{enumerate}[label=(\roman*), font=\normalfont, ref=(\roman*)]
      \item\label{lem.k.lem2.item.1}$B\in\overline{\mathcal{TR}}_{i'}$ for some $i'<i$,
      \item\label{lem.k.lem2.item.2}$B\not\in\mathcal{TR}$.
    \end{enumerate}
  Moreover, if $A\not\in\mathcal{TC}$ then $B\not\in\mathcal{TC}$ for all such $B$ listed above.
\end{Lemma}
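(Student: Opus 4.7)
The plan is to apply a carefully chosen relation to express $h[A]$ as a linear combination of other coefficients $h[B]$, and then verify that every $B$ appearing is of one of the two allowed types. First I extract the explicit shape of $A$: since $A \in \mathcal{TR}_i$ with $k_A = i+1$, the staircase condition $\mathcal{K}_A = \{2, \ldots, i\}$, combined with $A \in \mathcal{TR}$ and \Cref{rem.01}, forces the row $\tau(j)$ of $A$ (for each $1 < j \leq i$) to have ones in columns $1, 2, \ldots, j$ and zeros elsewhere; consequently, the submatrix of $A$ in rows $\tau(i), \ldots, \tau(2)$ together with column $1$ and row $m$ is entirely determined. The row $\tau(i+1) = m - i$ has $a_{m-i, 1} = 1$ with row sum $i+1$, but since $A \notin \mathcal{TR}_{i+1}$ it cannot have exactly $i$ ones in columns $\geq 2$. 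A short counting argument---using $i + 1 < m$ to rule out the degenerate possibility of all $m$ entries of that row being odd---then produces a column $w > 1$ at which row $m - i$ deviates from the ideal pattern.

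Next, I apply a relation targeting this defect. The natural candidate is a column-exchange relation $C_{w, w'}^{m-i}(A)$, with $w'$ chosen to absorb the excess in row $m-i$; if any such single relation has coefficient $\equiv 0 \pmod{2}$ on $h[A]$, one combines several critical relations $Z_{j, k}(A)$ in the spirit of the proof of \Cref{lem.outside.rim}, where the standing parity assumption $a - m \equiv b \pmod{2}$ is essential for producing a combined coefficient that is odd. Either way, one obtains an identity $h[A] = \sum_l c_l \, h[B_l]$ in which each $B_l$ differs from $A$ by a single elementary swap of two units. A case analysis on the location of the swap then completes the proof: a swap touching the ``free'' rows $1, \ldots, m - i - 1$ in column $1$ or involving row $m$ produces $b_{l, 1} \neq 1$ for some $l < m$ or $b_{m, k} \neq 0$ for some $k > 1$, placing $B \notin \mathcal{TR}$, which is type (ii); a swap disturbing the rigid lower block in rows $\tau(i), \ldots, \tau(2)$ strictly lowers the staircase of $B$, so that $B \in \overline{\mathcal{TR}}_{i'}$ for some $i' < i$, which is type (i).

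The main obstacle I anticipate is the exhaustiveness of the case analysis: ensuring that every $B \in \overline{\mathcal{TR}}_i$ appearing falls into one of the two allowed types. Any potential stray $B$ with $k_B \leq i$ can be reduced further by invoking \Cref{lem.k.lem}, which expresses $h[B]$ in turn as a combination of $h[B']$ with $B'$ of the desired form, and substituting back yields the required decomposition. Finally, the \emph{moreover} clause follows exactly as in the final paragraph of the proof of \Cref{lem.k.lem}: the permitted swaps cannot turn an $A \notin \mathcal{TC}$ into some $B \in \mathcal{TC}$, because doing so would require an entry $b_{l, 1} \geq 2$ with $l < m$, which is precluded by $A \in \mathcal{TR}$.
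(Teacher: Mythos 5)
Your setup (the rigid staircase shape of $A$, the existence of a defect in row $\tau(i+1)$) agrees with the paper, which simply takes the minimal $1<s\leq i$ with $a_{\tau(i+1)s}$ even and applies the \emph{row} relation $R_{\tau(i+1),\tau(s)}^{s}(A)$, whose left coefficient $a_{\tau(i+1)s}+1$ is then automatically odd — no fallback to combinations of $Z$-relations is needed. The genuine gap is in your case analysis of the resulting matrices $B$. The relation also produces terms in which a unit of row $\tau(i+1)$ is moved from a column $l>s$ into column $s$, paired with the corner unit at $(\tau(s),s)$ moving out to column $l$. Such a $B$ touches neither column $1$ nor row $m$, and it preserves the odd-entry count of every row $\tau(2),\dots,\tau(i+1)$ (the entry at $(\tau(i+1),s)$ goes from even to odd while the one at $(\tau(i+1),l)$ goes from odd to even, and row $\tau(s)$ keeps exactly $s$ odd entries). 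Hence $B\in\overline{\mathcal{TR}}_{i}$ still: it is neither of type (i) nor of type (ii), so your dichotomy ``swap hits the free rows/row $m$ $\Rightarrow$ (ii); swap hits the rigid block $\Rightarrow$ (i)'' is not exhaustive.

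Your proposed repair — absorbing these strays via \Cref{lem.k.lem} — does not close the gap. First, \Cref{lem.k.lem} carries the hypothesis $w_{1}^{j}(B)>w_{1}^{j-1}(B)$ for $j_{B}<j\leq i$, which the stray $B$ need not satisfy. Second, and decisively, the conclusion of \Cref{lem.k.lem} itself outputs matrices that are still in $\overline{\mathcal{TR}}_{i}$ (its cases (ii) and (iii), with larger $k$ or smaller in the column-order), so substituting back cannot yield a combination consisting only of types (i) and (ii); taming those outputs is precisely the job of the enclosing induction in \Cref{lem.sruct.1}, which \Cref{lem.k.lem2} is not entitled to assume. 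The paper instead resolves the strays internally: for such a $B$ the minimal even entry of row $\tau(i+1)$ now sits in a column $t$ with $t>s$, so one reapplies $R_{\tau(i+1),\tau(t)}^{t}$ and the defect index strictly increases, forcing termination. Finally, a small point: the ``moreover'' clause here is not the same argument as at the end of \Cref{lem.k.lem}; it uses $i<m-1$ to see that the first row of $A$ is untouched by every swap, so $B\in\mathcal{TC}$ would force $a_{1v}=1$ for $v<m$ and hence $A\in\mathcal{TC}$.
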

\begin{proof}
  Firstly, recall that the sum of the entries in the $\tau(i+1)$th-row of $A$ is $i+1$. Now, since $A\not\in\mathcal{TR}_{i+1}$, we deduce that the $\tau(i+1)$th-row of $A$ contains at most $i-1$ odd entries. Hence, there exists some $1<s\leq i$ such that $a_{\tau(i+1)s}$ is even and we choose $s$ be minimal with this property. To ease notation, we set $q\coloneqq\tau(i+1)$ and $u\coloneqq\tau(s)$. Note that $a_{us}=1$. The relation $R_{q,u}^{s}(A)$ gives that $h[A]=\sum_{l\neq s}a_{ql}h[\lowsmallsup{B}{[l]}]$ where $\lowsmallsup{B}{[l]}\coloneqq\rowexop{A}{q}{u}{s}{l}$ for $l\neq s$ with $a_{ql}\not\equiv 0$.

  If $l=1$, then $\lowsmallsup{B}{[1]}\not\in\mathcal{TR}$, and so $\lowsmallsup{B}{[1]}$ is as described in \caseCref{lem.k.lem2.item.2}. Now, if $1<l<s$, then $\lowsmallsup{B}{[l]}\in\overline{\mathcal{TR}}_{s-1}$ with $s-1<i$, and so $\lowsmallsup{B}{[l]}$ is as described in \caseCref{lem.k.lem2.item.1}. Meanwhile, if $l>s$, then $\lowsmallsup{B}{[l]}\in\overline{\mathcal{TR}}_{i}$, and there exists some $s<t\leq i$ (depending on $l$) such that $\smallsup{b}[qt]{[l]}$ is even, and we take $t$ to be minimal with this property. The relation $R_{q,\tau(t)}^{t}(\lowsmallsup{B}{[l]})$ expresses $h[\lowsmallsup{B}{[l]}]$ as a linear combination of $h[D]$s for some $D\in\mathcal{T}$ that must either fit into one of the cases described in the statement of the claim, or otherwise once again $D\in\overline{\mathcal{TR}}_{i}$ and there exists some $t<v\leq i$ such that $d_{qv}$ is even, and we take $v$ to be minimal with this property. Noting that $v>t>s$, it is clear that this process must terminate, hence providing the desired expression for $h[A]$.

  Now, suppose that $A\not\in\mathcal{TC}$ but $\lowsmallsup{B}{[l]}\in\mathcal{TC}$ for some $l\neq s$ with $a_{ql}\not\equiv 0$. Then, notice that $\lowsmallsup{B}{[l]}$ agrees with $A$ outside the $\tau(i+1)$th and $\tau(s)$th rows, and so in particular they agree in the first row since $i<m-1$. Hence $a_{1v}=\lowsmallsup{b}[1v]{[l]}=1$ for $1\leq v<m$ since $\lowsmallsup{B}{[l]}\in\mathcal{TC}$. Now, by considering the first row-sum and the last column-sum of $A$, we deduce that $a_{1m}=b$ and $a_{vm}=0$ for $1<v\leq m$. However, this implies that $A\in\mathcal{TC}$, which is a contradiction. Once again, by applying this argument recursively, it follows that if $A\not\in\mathcal{TC}$, then all such $B$ produced by this procedure satisfy $B\not\in\mathcal{TC}$ as well.
\end{proof}

\begin{Lemma}\label{lem.sruct.1}
  Let $1<i<m-1$ and let $A\in\overline{\mathcal{TR}}_{i}$. Then we may express $h[A]$ as a linear combination of $h[B]$s for some $B\in\mathcal{T}\setminus\mathcal{TR}$. Moreover, if $A\not\in\mathcal{TC}$ then all such $B$ satisfy $B\not\in\mathcal{TR}\cup\mathcal{TC}$.
\end{Lemma}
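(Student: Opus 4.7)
The plan is to prove the lemma by induction on $i$, starting from the smallest permissible value, with a further layered induction inside each step in order to handle the case $k_A \leq i$. We fix $A \in \overline{\mathcal{TR}}_i$ with $1 < i < m-1$, and assume inductively that the statement holds for every $A' \in \overline{\mathcal{TR}}_{i'}$ with $i' < i$.

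If $k_A = i+1$, then \Cref{lem.k.lem2} applies directly and writes $h[A]$ as a linear combination of $h[B]$s in which each $B$ either lies in $\overline{\mathcal{TR}}_{i'}$ with $i' < i$ (so that the outer inductive hypothesis supplies the reduction to $\mathcal{T} \setminus \mathcal{TR}$) or already satisfies $B \not\in \mathcal{TR}$. The moreover clause is preserved by the analogous clause in \Cref{lem.k.lem2}.

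For $k_A \leq i$, we set up a nested induction, proceeding downward on $k_A$ from $k_A = i$ to $k_A = 2$, and within each fixed $k_A$ inducting on $A \in \overline{\mathcal{TR}}_i$ with respect to the column-order $<_C$ restricted to the final $\tau(w_1^{j_A}(A))$ columns. If the hypothesis of \Cref{lem.k.lem}, namely that $w_1^j(A) > w_1^{j-1}(A)$ for all $j_A < j \leq i$, is satisfied, then we apply \Cref{lem.k.lem}: the three classes of $B$s appearing in its conclusion are handled respectively by the outer induction ($B \in \overline{\mathcal{TR}}_{i'}$ with $i' < i$), the middle induction ($B \in \overline{\mathcal{TR}}_i$ with $k_B > k_A$), and the innermost induction ($B \in \overline{\mathcal{TR}}_i$ with $k_B = k_A$ and $B <_C A$ witnessed in the final $\tau(w_1^{j_A}(A))$ columns).

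If the hypothesis of \Cref{lem.k.lem} fails, we fall back on \Cref{lem.critical.relation}. The failure forces, via the contrapositive of \Cref{lem.tech.R1}, the existence of some $k_A \leq j \leq i$ and $k_A \leq l < m$ with $a_{\tau(j)l} = 1$ and $z_{\tau(j), l}(A) \neq 0$, so that the critical relation $Z_{\tau(j),l}(A)$ expresses $h[A]$ as a linear combination of $h[B]$s with $B <_R A$ and $B <_C A$ (by \Cref{rem.order}), each of which should fall within reach of one of the inductive layers. The moreover clause propagates throughout by the analogous guarantees in \Cref{lem.k.lem} and \Cref{lem.k.lem2}, together with \Cref{lem.rim} forcing $h[B] = 0$ for $B \not\in \mathcal{TR} \cup \mathcal{TC}$, so any such $B$ may be discarded. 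The main obstacle lies in this last case: one needs to verify carefully that the failure of \Cref{lem.k.lem}'s hypothesis really does produce a nonzero $z_{\tau(j),l}(A)$ of the required form, and that every $B$ appearing in the resulting critical relation can be subsumed by a layer of the induction rather than escaping the well-founded ordering on which the entire argument rests.
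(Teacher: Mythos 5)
Your proposal reproduces the architecture of the paper's argument: an outer induction on $i$, counterexamples ordered first by (descending) $k_A$ and then by $<_C$, the case $k_A=i+1$ dispatched by \Cref{lem.k.lem2}, the case where \Cref{lem.k.lem} applies, and the remaining case handled by the critical relation, with \Cref{lem.tech.R1} supplying the bridge between the last two (you run it in contrapositive form, the paper runs it forward, but the dichotomy is the same). So the approach is the right one.

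However, there is a genuine gap exactly where you flag ``the main obstacle,'' and it is not a routine verification: it is the substantive content of the inductive step. When the hypothesis of \Cref{lem.k.lem} fails and you invoke $Z_{\tau(j),l}(A)$, \Cref{rem.order} only tells you that each $B=\rowexop{A}{u}{\tau(j)}{l}{v}$ on the right-hand side satisfies $B<_{C}A$ and $B<_{R}A$. That alone does not place $B$ inside your well-founded order: a priori $B$ could lie in $\overline{\mathcal{TR}}_{i}$ with $k_{B}<k_{A}$ (escaping the middle, descending-$k_A$ layer), or could fail to be comparable in the way your innermost layer requires. What must be proved — and what the paper proves by a case analysis on the position $(u,v)$ of the moved entry (whether $v=1$, $1<v<k_{A}$, or $k_{A}\leq v$, combined with the location of $u$ relative to $\tau(k_{A})$ and $\tau(j)$ and the vanishing of $a_{u l}$ and $a_{\tau(j)v}$) — is that every such $B$ is either outside $\mathcal{TR}$, or lies in $\overline{\mathcal{TR}}_{i'}$ with $i'<i$, or lies in $\overline{\mathcal{TR}}_{i}$ with $k_{B}=k_{A}$ and $B<_{C}A$. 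Until that trichotomy is established, the induction is not known to terminate, so the proof is incomplete. Two smaller points: \Cref{lem.tech.R1} requires $i>2$, so your contrapositive step is unavailable at $i=2$; the paper handles $i=2$ by a separate direct argument using the relation $R_{m-2,m-1}^{l}(A)$ (one checks that for $i=2$ and $k_A=2$ the hypothesis of \Cref{lem.k.lem} is vacuous, but this should be said). And your innermost induction should be on the full column-order $<_{C}$, not its restriction to the final $\tau(w_{1}^{j_{A}}(A))$ columns; the ``witnessed within'' clause of \Cref{lem.k.lem}\labelcref{lem.k.lem.item.3} is information about where the comparison is decided, not a coarser order, and the $B$'s produced by the critical relation need not respect that restriction.
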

\begin{proof}
  We proceed by induction on $i\geq 2$. Firstly, suppose that $i=2$. Since $A\in\mathcal{T}\setminus\mathcal{TR}_{3}$ with $\sum_{l}a_{\tau(3)l}=3$, the $(m-2)$th-row of $A$ must consist of a single odd entry, which must then be equal to $1$, and be located in the first column of $A$. On the other hand, since $A\in\mathcal{TR}_{2}$, there exists a unique $l>1$ with $a_{(m-1)l}=1$. The relation $R_{m-2,m-1}^{l}(A)$ gives $h[A]=h[B]$ for $B\coloneqq\rowexop{A}{m-2}{m-1}{l}{1}$. Evidently, $B\not\in\mathcal{TR}$, and so the claim holds for $i=2$.

  Now, we suppose that $i>2$ and that the claim holds for all $B\in\mathcal{T}$ such that $B\in\overline{\mathcal{TR}}_{i'}$ for some $2\leq i'<i$. Suppose, for the sake of contradiction, that the claim fails for this particular value of $i$ and consider the set of counterexamples $A\in\overline{\mathcal{TR}}_{i}$ whose value of $k_{A}$ is maximal amongst all counterexamples. Now, we choose $A$ to be the element of this set that is minimal with respect to the column-ordering. In other words, if $D\in\overline{\mathcal{TR}}_{i}$ is a counterexample to the claim, then either $k_{D}<k_{A}$, or $k_{D}=k_{A}$ and $D\geq_{C}A$.

  Now if $k_{A}=i+1$, then \Cref{lem.k.lem2} states that we may express $h[A]$ as a linear combination of some $h[B]$s for some $B\in\mathcal{T}$ where either $B\in\overline{\mathcal{TR}}_{i}$ with $i'<i$, or $B\not\in\mathcal{TR}$. In the first case the inductive hypothesis states that $h[B]$ can be expressed as a linear combination of some $h[D]$s with $D\not\in\mathcal{TR}$, whilst in the second case we have $B\in\mathcal{T}\setminus\mathcal{TR}$. Thus, $h[A]$ satisfies the statement of the claim which contradicts that $A$ was chosen to be a counterexample.

  Hence, we may assume that $k_{A}\leq i$. Suppose, for the sake of contradiction, that there exists \mbox{$k_{A}\leq j\leq i$}, $k_{A}\leq k<m$ such that $a_{\tau(j)k}=1$ and $z_{\tau(j),k}(A)=1$. The relation $Z_{\tau(j),k}(A)$ gives the expression:
    \begin{equation}\label{eq.lem.sruct.1.1}
      h[A]=\sum_{\substack{u<\tau(j)\\l>k}}a_{ul}h[\lowsmallsup{B}{[u,l]}]+\sum_{\substack{u>\tau(j)\\l<k}}a_{ul}h[\lowsmallsup{B}{[u,l]}],
    \end{equation}
  where $\lowsmallsup{B}{[u,l]}\coloneqq\rowexop{A}{u}{\tau(j)}{k}{l}$ for all such $(u,l)$ satisfying $a_{ul}\not\equiv 0$.

  Now, set $B\coloneqq\lowsmallsup{B}{[u,l]}$ where $(u,l)$ is as in \labelcref{eq.lem.sruct.1.1} with $a_{ul}\not\equiv 0$. We claim that $B$ fits into one of the following cases: $B\not\in\mathcal{TR}$, $B\in\mathcal{TR}_{i'}$ for some $i'<i$, or $B\in\overline{\mathcal{TR}}_{i}$ with $k_{B}=k_{A}$ and $B<_{C}A$. We provide full details for the case where $u>\tau(j)$, $l<k$ with the other case, that is $u<\tau(j)$, $l>k$, being similar.

  If $l=1$ then $B\not\in\mathcal{TR}$ and so $B$ is of the desired form. Now, if $1<l<k_{A}$, then either $u\geq\tau(k_{A})$ or $\tau(j)<u<\tau(k_{A})$. In the first case, we have $B\in\overline{\mathcal{TR}}_{j-1}$, whilst in the second case we have $B\in\overline{\mathcal{TR}}_{\tau(u-1)}$ if $a_{uk}=1$ and $B\in\overline{\mathcal{TR}}_{j-1}$ if $a_{uk}=0$. Hence, in either case, we deduce that $B\in\overline{\mathcal{TR}}_{i'}$ for some $i'<i$. Suppose now that $k_{A}\leq l<k$, then we must have $\tau(j)<u\leq\tau(k_{A})$ since $a_{ul}\not\equiv 0$. Now, if $a_{uk}=1$ then $B\in\overline{\mathcal{TR}}_{\tau(u-1)}$, whilst if $a_{uk}=0$ and $a_{\tau(j)l}=1$, then $B\in\overline{\mathcal{TR}}_{j-1}$. Finally, if $a_{uk}=0$ and $a_{\tau(j)l}=0$, then $B\in\overline{\mathcal{TR}}_{i}$ with $k_{B}=k_{A}$ and $B<_{C}A$. But then, either by the inductive hypothesis on $i$, or by the minimality of $A$, all such $B$ produced in this procedure must satisfy the statement of the claim, and hence so must $A$, which contradicts that $A$ was chosen to be a counterexample.

  Therefore, we may assume that that $z_{\tau(j),k}(A)=0$ for all $k_{A}\leq j\leq i$, $k_{A}\leq k<m$ such that $a_{\tau(j)k}=1$. Then, by \Cref{lem.tech.R1} and \Cref{lem.k.lem}, we may express $h[A]$ as a linear combination of $h[B]$s for some $B\in\mathcal{T}$ where either: $B\in\overline{\mathcal{TR}}_{i'}$ for some $i'<i$, $B\in\overline{\mathcal{TR}}_{i}$ with $k_{B}>k_{A}$, or $B\in\overline{\mathcal{TR}}_{i}$ with $k_{B}=k_{A}$ and $B<_{C}A$. But then, either by the inductive hypothesis on $i$, maximality of $k_{A}$, or minimality of $A$, each such $B$ must satisfy the statement of the claim, and hence so must $A$, which contradicts that $A$ was chosen to be a counterexample. Thus, no such counterexample may exist. Finally, once again, it is clear to see from the steps taken above that if $A\not\in\mathcal{TC}$, then all such $B$ produced by this procedure satisfy $B\not\in\mathcal{TC}$ as well.
\end{proof}

\begin{Corollary}\label{cor.sruct.1}
  Let $2\leq i<m-1$ and let $A\in\overline{\mathcal{TR}}_{i}$ with $A\not\in\mathcal{TC}$. Then $h[A]=0$.
\end{Corollary}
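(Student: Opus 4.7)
The plan is to combine the two structural results that have just been established, which together do essentially all of the work. By \Cref{lem.sruct.1}, for any $A \in \overline{\mathcal{TR}}_i$ with $2 \leq i < m-1$, the coefficient $h[A]$ can be written as a $\k$-linear combination of coefficients $h[B]$ with $B \in \mathcal{T} \setminus \mathcal{TR}$; moreover, the ``furthermore" clause of \Cref{lem.sruct.1} guarantees that if in addition $A \notin \mathcal{TC}$, then every matrix $B$ appearing in the expression actually lies outside $\mathcal{TR} \cup \mathcal{TC}$.

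Given this, I would simply invoke \Cref{lem.rim}: for every $B \in \mathcal{T}$ with $B \notin \mathcal{TR} \cup \mathcal{TC}$ we have $h[B] = 0$, because the standing hypothesis that $h \in \Hom_{\k\mathfrak{S}_{r}}(M(\alpha),M(\beta))$ is relevant (fixed at the start of \secCref{subsec.new.rel}) is precisely the input that \Cref{lem.rim} requires. Substituting $h[B] = 0$ for each summand in the expression supplied by \Cref{lem.sruct.1} forces $h[A] = 0$, which is the desired conclusion.

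There is essentially no obstacle here, since both \Cref{lem.sruct.1} and \Cref{lem.rim} have already been proved; the corollary is just the observation that their conclusions chain together. The only thing one should verify is that the range $2 \leq i < m-1$ for which \Cref{lem.sruct.1} applies is precisely the range stated in the corollary, which it is. Consequently, the proof can be presented in a single line: apply \Cref{lem.sruct.1} to $A$, then apply \Cref{lem.rim} to each term of the resulting expansion.

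\begin{proof}
By \Cref{lem.sruct.1}, since $A \in \overline{\mathcal{TR}}_{i}$ with $2 \leq i < m-1$ and $A \notin \mathcal{TC}$, we may express $h[A]$ as a $\k$-linear combination of coefficients $h[B]$ with $B \in \mathcal{T}$ and $B \notin \mathcal{TR} \cup \mathcal{TC}$. By \Cref{lem.rim}, $h[B] = 0$ for every such $B$, and hence $h[A] = 0$.
\end{proof}
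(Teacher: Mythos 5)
Your proof is correct and is essentially identical to the paper's own argument: both apply the ``moreover'' clause of \Cref{lem.sruct.1} to write $h[A]$ as a linear combination of $h[B]$ with $B\not\in\mathcal{TR}\cup\mathcal{TC}$, and then conclude via \Cref{lem.rim}. Your remark that the range $2\leq i<m-1$ matches the hypothesis $1<i<m-1$ of \Cref{lem.sruct.1} (as $i$ is an integer) is the only point worth checking, and you have checked it.
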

\begin{proof}
  By \Cref{lem.sruct.1}, we may express $h[A]$ as a linear combination of $h[B]$s for some $B\in\mathcal{T}$ with $B\not\in\mathcal{TR}\cup\mathcal{TC}$. But $h[B]=0$ for all such $B$ by \Cref{lem.rim}, and so the result follows.
\end{proof}

\begin{Lemma}\label{lem.sruct.2}
  Let $A\in\mathcal{TR}\setminus\mathcal{TC}$. Then $h[A]=0$.
\end{Lemma}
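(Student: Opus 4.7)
The plan is to reduce to the boundary stratum $A\in\mathcal{TR}_{m-1}$ and adapt the proof of \Cref{lem.sruct.1} to the index $i=m-1$. Since $\mathcal{TR}=\bigcup_{i=2}^{m-1}\overline{\mathcal{TR}}_{i}$ and \Cref{cor.sruct.1} already disposes of $\overline{\mathcal{TR}}_{i}\setminus\mathcal{TC}$ for all $2\leq i<m-1$, it suffices to treat $A\in\overline{\mathcal{TR}}_{m-1}=\mathcal{TR}_{m-1}$ with $A\not\in\mathcal{TC}$.

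First I would exploit the rigid structure of $\mathcal{TR}_{m-1}$: the $0/1$ constraints on rows $2$ through $m-1$ combined with the column sum $\beta_{m}=b$ force $a_{i,m}=0$ for $2\leq i\leq m-1$ (whence $a_{1,m}=b$), and row $2$ is necessarily $(1,1,\dots,1,0)$. I would then check directly from the definition of $\mathcal{K}_{A}$ that if $k_{A}=m$ (i.e.\ $\mathcal{K}_{A}=\{2,\dots,m-1\}$), then $c_{k}\coloneqq\sum_{u=2}^{m-1}a_{u,k}\geq m-k$ for each $2\leq k\leq m-1$; the identity $\sum_{k}c_{k}=(m-2)(m-1)/2=\sum_{k}(m-k)$ collapses these inequalities to equalities, yielding $a_{1,k}=1$ for $2\leq k\leq m-1$, so that $A\in\mathcal{TC}$. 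Hence every $A\in\mathcal{TR}_{m-1}\setminus\mathcal{TC}$ satisfies $k_{A}\leq m-1$, placing us in the regime where \Cref{lem.k.lem} (whose hypothesis $1<i<m$ includes $i=m-1$) applies.

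With these preliminaries, the remainder mirrors the proof of \Cref{lem.sruct.1} at $i=m-1$: I would choose a counterexample $A$ with $k_{A}$ maximal and then $A$ minimal with respect to $<_{C}$ amongst those. If some $z_{\tau(j),k}(A)=1$ (for $k_{A}\leq j\leq m-1$, $k_{A}\leq k<m$ with $a_{\tau(j),k}=1$), then the critical relation $Z_{\tau(j),k}(A)$ expresses $h[A]$ as a combination of $h[B]$s with $B$ either outside $\mathcal{TR}$ (so $h[B]=0$ by \Cref{lem.rim}), in $\overline{\mathcal{TR}}_{i'}$ for some $i'<m-1$ (so $h[B]=0$ by \Cref{cor.sruct.1}), or in $\overline{\mathcal{TR}}_{m-1}$ with $k_{B}=k_{A}$ and $B<_{C}A$. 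Otherwise \Cref{lem.tech.R1} supplies the hypothesis of \Cref{lem.k.lem}, whose conclusion reduces $h[A]$ to a combination of $h[B]$s lying in strictly earlier strata, with larger $k_{B}$, or with $k_{B}=k_{A}$ and $B<_{C}A$. The moreover-clauses of both lemmas preserve $B\not\in\mathcal{TC}$, and so either prior results or the maximality/minimality of $A$ force $h[B]=0$, contradicting the choice of $A$. The main obstacle will be verifying at the boundary $i=m-1$ that none of the invoked reductions escape the framework---especially that $k_{A}=m$ is genuinely ruled out, since \Cref{lem.k.lem2} (the tool that would otherwise handle $k_{A}=i+1=m$) is restricted to $i<m-1$.
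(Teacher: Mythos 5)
Your opening structural claim is false, and it is worth being precise about why, because the configurations it claims away are exactly the ones the paper's own proof spends its effort on. Membership in $\mathcal{TR}_{m-1}$ only pins down the \emph{multiset} of entries in each of rows $2,\dots,m-1$ (row $\tau(j)$ consists of $j$ ones and $m-j$ zeros, with a forced $1$ in column $1$); it says nothing about whether those ones avoid column $m$. For instance, with $m=3$, $a=3$, $b=2$ (so $\alpha=(4,2,1)$, $\beta=(3,2,2)$) the matrix $A=\left(\begin{smallmatrix}1&2&1\\1&0&1\\1&0&0\end{smallmatrix}\right)$ lies in $\mathcal{TR}_{m-1}\setminus\mathcal{TC}$ and has $a_{2,m}=1$; the same example refutes ``row $2$ is necessarily $(1,1,\dots,1,0)$'' and ``$a_{1,m}=b$''. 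The ``identity'' $\sum_{k}c_{k}=(m-2)(m-1)/2$ is a consequence of this false claim and so is not available as stated: summing the rows actually gives $\sum_{k=2}^{m-1}c_{k}=\sum_{u=2}^{m-1}(m-u)-\sum_{u=2}^{m-1}a_{um}$. The paper's proof of the lemma, by contrast, dispatches the subcase ``$a_{um}=0$ for all $1<u<m$'' in a few lines via $C_{v,m}^{1}(A)$ and devotes the bulk of the argument to the case where column $m$ \emph{does} carry odd entries below the first row, using $Z_{u,m}(A)+Z_{u',m}(A)$ (for the two largest such rows) or $Z_{1,m}(A)$, whose coefficients are $1$ by \myCref{lem.critical.rel.coef}[lem.critical.rel.coef.item.3] and \myCref{lem.critical.rel.coef}[lem.critical.rel.coef.item.5], together with a single $<_{C}$-induction inside $\mathcal{TR}\setminus\mathcal{TC}$ and \Cref{lem.rim}.

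That said, the one genuinely new ingredient your route needs --- that $k_{A}=m$ forces $A\in\mathcal{TC}$, so that \Cref{lem.k.lem2} is never required at $i=m-1$ --- is true, and your counting argument survives once the identity is demoted to the inequality $\sum_{k=2}^{m-1}c_{k}\leq\sum_{k=2}^{m-1}(m-k)$ coming from the displayed row computation (no assumption on column $m$ needed); comparing with $c_{k}\geq m-k$ then collapses everything and yields \emph{both} $a_{um}=0$ for $1<u<m$ \emph{and} $a_{1k}=1$ for $k<m$, i.e.\ $A\in\mathcal{TC}$. With that repair your plan --- reduce to $\overline{\mathcal{TR}}_{m-1}$ via \Cref{cor.sruct.1}, rule out $k_{A}=m$, and rerun the \Cref{lem.sruct.1} induction (maximal $k_{A}$, then minimal $<_{C}$) at $i=m-1$, using the ``moreover'' clauses to stay outside $\mathcal{TC}$ --- is a viable alternative to the paper's argument rather than a reproduction of it. You would still owe the reader the verification that the internal case analysis of \Cref{lem.sruct.1}, \Cref{lem.k.lem} and \Cref{lem.tech.R1} carries over verbatim at the boundary $i=m-1$ (and, for $m=3$, that the vacuity of the hypothesis of \Cref{lem.k.lem} at $j_{A}=i$ stands in for \Cref{lem.tech.R1}, whose statement requires $i>2$); these checks look routine, but as written the proposal's first paragraph is not a correct description of $\mathcal{TR}_{m-1}$ and must be replaced by the conditional argument above.
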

\begin{proof}
  Suppose, for the sake of contradiction, that the claim is false, and let $A\in\mathcal{T}$ be a counterexample that is minimal with respect to the column-ordering of \myCref{def.order}[def.order.item.2]. By \Cref{cor.sruct.1}, we may assume that $A\not\in\overline{\mathcal{TR}}_{i}$ for any $i<m-1$, and so we must have that $A\in\mathcal{TR}_{m-1}\setminus\mathcal{TC}$ since $A\in\mathcal{TR}$. Hence, for each $1<u<m$, either $a_{um}=0$ or $a_{um}=1$, and we claim that there exists at least one $u$ in this range with $a_{um}=1$. Indeed, suppose otherwise, then there exists some $1<v<m$ with $a_{1v}$ even since $A\not\in\mathcal{TC}$. But then the relation $C_{vm}^{1}(A)$ expresses $h[A]$ as a linear combination of $h[B]$s for some $B\in\mathcal{T}$ with $B<_{C}A$ and \mbox{$B\in\mathcal{TR}\setminus\mathcal{TC}$}. But $h[B]=0$ for all such $B$ by minimality of $A$, which contradicts that $A$ was chosen to be a counterexample. We hence write $(u_{1},\ldots,u_{s})$ for the increasing sequence whose terms are given by all such $u$. Firstly, suppose that $s>1$ and set $u\coloneqq u_{s-1}$ and $u'\coloneqq u_{s}$. By \myCref{lem.critical.rel.coef}[lem.critical.rel.coef.item.3], we have that $z_{u,m}(A)+z_{u',m}(A)=1$ and so the relation $Z_{u,m}(A)+Z_{u',m}(A)$ is given by:
    \begin{equation}\label{eq.lem.sruct.2.1}
      h[A]=\sum_{\substack{v>u\\l<m}}a_{vl}h[\lowsmallsup{B}{[v,l]}]+\sum_{\substack{v>u\\l<m}}a_{vl}h[\lowsmallsup{D}{[v,l]}],
    \end{equation}
  where $\lowsmallsup{B}{[v,l]}\coloneqq\rowexop{A}{v}{u}{m}{l}$ and $\lowsmallsup{D}{[v,l]}\coloneqq\rowexop{A}{v}{u'}{m}{l}$ for all such $(v,l)$ with $a_{vl}\not\equiv 0$. Now, let $(v,l)$ be as in \labelcref{eq.lem.sruct.2.1} with $a_{vl}\not\equiv 0$.

  If $l=1$, then \mbox{$\lowsmallsup{B}{[v,1]},\lowsmallsup{D}{[v,1]}\not\in\mathcal{TR}\cup\mathcal{TC}$} and so $h[\lowsmallsup{B}{[v,1]}]=h[\lowsmallsup{D}{[v,1]}]=0$ by \Cref{lem.rim}. On the other hand, if $l>1$, then $\lowsmallsup{B}{[v,l]},\lowsmallsup{D}{[v,l]}\in\mathcal{TR}\setminus\mathcal{TC}$ and $A<_{C}\lowsmallsup{B}{[v,l]},\lowsmallsup{D}{[v,l]}$. Hence, by the minimality of $A$, once again we deduce that $h[\lowsmallsup{B}{[v,l]}]=h[\lowsmallsup{D}{[v,l]}]=0$. Thus $h[A]=0$, which contradicts that $A$ was chosen to be a counterexample.

  Hence we may assume that $s=1$, or in other words that there exists a unique $u$ in the range $1<u<m$ such that $a_{um}=1$, and so then $z_{1,m}(A)=1$ by \myCref{lem.critical.rel.coef}[lem.critical.rel.coef.item.5]. By applying similar considerations to the above to the relation $Z_{1,m}(A)$, we once again reach a contradiction, and so no such counterexample may exist.
\end{proof}

\begin{Definition}\label{def.tc}
  For $1<i<m$, similarly to $\mathcal{TR}_{i}$ of \myCref{def.filtr}[def.filtr.item.2], we define:
    \[ \mathcal{TC}_{i}\coloneqq\{A\in\mathcal{TC}\mid\text{the}\ \tau(j)\text{th-column of}\ A\settext{contains}j\settext{odd entries for}1<j\leq i\}. \]
\end{Definition}

\begin{Remark}\label{rem.tran.flat}
  Firstly, note that by \Cref{lem.hom.rel}, we see that the transpose homomorphism $h'\in\Hom_{\k\mathfrak{S}_{r}}(M(\beta),M(\alpha))$ of $h$ is relevant. Now, the results proven above are independent of the values of $a$ and $b$, provided that they satisfy the parity condition \mbox{$a-m\equiv b$}. In particular, note that this condition is preserved under the swap $(a,b)\leftrightarrow(a',b')$, where $a'\coloneqq b+m-1$, $b'\coloneqq a-m+1$. But, as in \Cref{rem.swap}, this swap is equivalent to the swap $\lambda\leftrightarrow\lambda'$ and accordingly $\alpha\leftrightarrow\beta$ and $\mathcal{T}\leftrightarrow\mathcal{T}'$. Therefore, by defining the subsets $\mathcal{TR}',\mathcal{TC}'\subseteq\mathcal{T}'$ analogously to $\mathcal{TR},\mathcal{TC}\subseteq\mathcal{T}$, we obtain the analogous results to those shown in this section for the coefficients $h'[A']$ of the $\rho[A']$ in $h'$.
\end{Remark}

\begin{Proposition}\label{prop.sruct.3}
  Let $A\in\mathcal{T}$ and suppose that $A\not\in\mathcal{TR}_{m-1}\cap\mathcal{TC}_{m-1}$. Then $h[A]=0$.
\end{Proposition}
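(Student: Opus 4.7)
The plan is to combine the previously established vanishing results (\Cref{lem.rim}, \Cref{lem.sruct.2}, and \Cref{cor.sruct.1}) together with \Cref{lem.sruct.1}, via a short case analysis. First I invoke \Cref{rem.tran.flat}: since $h$ is relevant, so is its transpose $h'$ by \myCref{lem.hom.rel}[lem.hom.rel.item.3], and all of the preceding structural results apply verbatim to $h'$ on $\mathcal{T}'$ with the roles of $\mathcal{TR},\mathcal{TR}_{i}$ exchanged with $\mathcal{TC},\mathcal{TC}_{i}$. Since $h[A]=h'[A']$, I may assume without loss of generality that $A\not\in\mathcal{TR}_{m-1}$; otherwise $A\not\in\mathcal{TC}_{m-1}$ forces $A'\not\in\mathcal{TR}'_{m-1}$ and the identical argument applied to $h'$ yields $h'[A']=0$.

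If $A\not\in\mathcal{TR}$, then either $A\not\in\mathcal{TC}$, in which case $h[A]=0$ by \Cref{lem.rim}, or $A\in\mathcal{TC}\setminus\mathcal{TR}$, in which case $A'\in\mathcal{TR}'\setminus\mathcal{TC}'$ and the transpose analogue of \Cref{lem.sruct.2} applied to $h'$ yields $h[A]=h'[A']=0$. So I may further assume $A\in\mathcal{TR}\setminus\mathcal{TR}_{m-1}$. Here I observe that $\mathcal{TR}=\mathcal{TR}_{2}$: for $A\in\mathcal{TR}$, the $(m-1)$th row sums to $\alpha_{m-1}=2$ and already contains $a_{m-1,1}=1$, so exactly one further entry equals $1$, giving precisely two odd entries. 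Hence $A\in\overline{\mathcal{TR}}_{i}$ for some $2\leq i<m-1$. If additionally $A\not\in\mathcal{TC}$, then \Cref{cor.sruct.1} gives $h[A]=0$.

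The remaining and only non-routine case is $A\in\overline{\mathcal{TR}}_{i}\cap\mathcal{TC}$ for some $2\leq i<m-1$. Here \Cref{cor.sruct.1} does not apply directly, but \Cref{lem.sruct.1} still produces an expansion $h[A]=\sum_{B}c_{B}h[B]$ with $c_{B}\in\k$ and every $B\in\mathcal{T}\setminus\mathcal{TR}$. For each such $B$, either $B\not\in\mathcal{TC}$, whence $B\not\in\mathcal{TR}\cup\mathcal{TC}$ and $h[B]=0$ by \Cref{lem.rim}; or $B\in\mathcal{TC}\setminus\mathcal{TR}$, whence the transpose analogue of \Cref{lem.sruct.2} gives $h[B]=h'[B']=0$. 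In either subcase $h[B]=0$, so $h[A]=0$, completing the proof. The main subtlety is the handling of this hybrid case: \Cref{lem.sruct.1} provides only the weaker conclusion $B\not\in\mathcal{TR}$ when the stronger hypothesis $A\not\in\mathcal{TC}$ fails, so one must also invoke the column-side analogue of \Cref{lem.sruct.2} to dispatch the $B\in\mathcal{TC}\setminus\mathcal{TR}$ terms.
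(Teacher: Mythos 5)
Your proof is correct and follows essentially the same route as the paper's: both reduce to the case $A\in\mathcal{TR}\cap\mathcal{TC}\setminus\mathcal{TR}_{m-1}$ using \Cref{lem.rim}, \Cref{lem.sruct.2} and its transpose (via \Cref{rem.tran.flat}), and then dispatch that case with the unconditional part of \Cref{lem.sruct.1} followed by the same vanishing results applied to the resulting $B\not\in\mathcal{TR}$. Your explicit check that $\mathcal{TR}=\mathcal{TR}_{2}$ (so that $A\in\overline{\mathcal{TR}}_{i}$ for some $2\leq i<m-1$) is a detail the paper leaves implicit, and is a welcome addition.
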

\begin{proof}
  Suppose that $D\in\mathcal{T}$ is such that $h[D]\neq 0$. Then, we may assume that we have \mbox{$D\in\mathcal{TR}\cup\mathcal{TC}$} since otherwise $h[D]=0$ by \Cref{lem.rim}. Moreover, we may assume that \mbox{$D\not\in\mathcal{TR}\setminus\mathcal{TC}$} since otherwise $h[D]=0$ by \Cref{lem.sruct.2}. On the other hand, if \mbox{$D\in\mathcal{TC}\setminus\mathcal{TR}$}, then \mbox{$D'\in\mathcal{TR}'\setminus\mathcal{TC}'$}, where $\mathcal{TR}',\mathcal{TC}'\subseteq\mathcal{T}'$ are as defined in \Cref{rem.tran.flat}. But then we have \mbox{$h[D]=h'[D']=0$} \'a la \Cref{lem.sruct.2}, which contradicts our choice of $D$, and so we may assume that $D\not\in\mathcal{TC}\setminus\mathcal{TR}$. In sum, we have shown that $h[D]=0$ for all $D\in\mathcal{T}$ with $D\not\in\mathcal{TR}\cap\mathcal{TC}$. In particular, to prove the \nameCref{prop.sruct.3}, we may assume that $A\in\mathcal{TR}\cap\mathcal{TC}$. Now, if $A\not\in\mathcal{TR}_{m-1}$, then there exists some $i$ with $1<i<m-1$ such that $A\in\overline{\mathcal{TR}}_{i}$. But then \Cref{lem.sruct.1} allows one to express $h[A]$ as a linear combination of $h[B]$s for some $B\in\mathcal{T}$ with $B\not\in\mathcal{TR}$. But then every such $B$ satisfies $B\not\in\mathcal{TR}\cap\mathcal{TC}$ and hence that $h[B]=0$ as shown above, and so $h[A]=0$. On the other hand, if $A\not\in\mathcal{TC}_{m-1}$, then $A'\not\in\mathcal{TR}'_{m-1}$ where $\mathcal{TR}'_{m-1}\subseteq\mathcal{T}'$ is defined analogously to $\mathcal{TR}_{m-1}\subseteq\mathcal{T}$. But then $h[A]=h'[A']=0$ by the $'$-decorated analogue to the argument outlined above, and so we are done.
\end{proof}

\begin{Theorem}\label{the.end.res}
  Let $\lambda=(a,m-1,\ldots,2,1^{b})$ with $a\geq m\geq 2$, $b\geq 1$, where $r\coloneqq\deg(\lambda)$, and suppose that the parameters $a$, $b$, and $m$ satisfy the parity condition: $a-m\equiv b\mod{2}$. Then $\End_{\k\mathfrak{S}_{r}}(\Sp(\lambda))\cong\k$.
\end{Theorem}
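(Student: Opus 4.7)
My plan is to deduce the theorem essentially as a corollary of the machinery developed in Sections 4 and 5, with just one additional combinatorial verification to finish.

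First, I would combine \myCref{rem.emb.second.rel}[rem.emb.second.rel.item.1] with \Cref{prop.sruct.3}. The former embeds $\End_{\k\mathfrak{S}_{r}}(\Sp(\lambda))$ as a \mbox{$\k$-subspace} of $\Rel_{\k\mathfrak{S}_{r}}(M(\alpha),M(\beta))$, and the latter asserts that any relevant $h=\sum_{A\in\mathcal{T}}h[A]\rho[A]$ has $h[A]=0$ whenever $A\notin\mathcal{TR}_{m-1}\cap\mathcal{TC}_{m-1}$. Hence it suffices to bound the cardinality of $\mathcal{TR}_{m-1}\cap\mathcal{TC}_{m-1}$.

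The key remaining step is to verify that $\mathcal{TR}_{m-1}\cap\mathcal{TC}_{m-1}$ consists of a single matrix $A^{*}$. Unpacking the definitions, any $A$ in this intersection satisfies: $a_{1k}=1$ for $1\leq k<m$ and $a_{i1}=1$ for $1\leq i<m$; $a_{im}=0$ for $1<i\leq m$ and $a_{mk}=0$ for $1<k\leq m$; and for each $1<j<m$, both the $\tau(j)$th row and $\tau(j)$th column contain exactly $j$ odd entries. Because the corresponding row and column sums $\alpha_{\tau(j)}=\beta_{\tau(j)}=j$ exactly match the odd-entry count, every such row and column must be \mbox{$\{0,1\}$-valued}. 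The first-row and last-column constraints then force $a_{1m}=b$ and $a_{m1}=a-m+1$, and the problem reduces to determining the middle $(m-2)\times(m-2)$ block with row sums $(m-2,m-3,\ldots,1)$ and column sums $(m-2,m-3,\ldots,1)$, with $0$-$1$ entries. I would prove uniqueness by a simple induction: row $2$ has $m-2$ ones in $m-2$ slots, hence is identically $1$; column $2$ is then forced to be all $1$s by its column-sum; and this peels off the outer rim, leaving a smaller instance of the same problem.

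Consequently $\dim_{\k}\Rel_{\k\mathfrak{S}_{r}}(M(\alpha),M(\beta))\leq 1$, hence $\dim_{\k}\End_{\k\mathfrak{S}_{r}}(\Sp(\lambda))\leq 1$. Since the identity endomorphism $\mathrm{id}_{\Sp(\lambda)}$ is a non-zero element of the endomorphism algebra, the inequality $\dim_{\k}\End_{\k\mathfrak{S}_{r}}(\Sp(\lambda))\geq 1$ also holds, and we conclude $\End_{\k\mathfrak{S}_{r}}(\Sp(\lambda))\cong\k$.

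The ``hard part'' of the proof is really already done: the entire Section~\ref{sec.end.alg.main} up through \Cref{prop.sruct.3} is the substantive work, reducing the support of a relevant homomorphism to the intersection $\mathcal{TR}_{m-1}\cap\mathcal{TC}_{m-1}$. What remains is the purely combinatorial singleton check above, which is routine (essentially a staircase-uniqueness argument). A small subtlety worth flagging is the base case $m=2$: there are no nontrivial $\mathcal{TR}_{i}$ or $\mathcal{TC}_{i}$, and the unique matrix $A^{*}$ is the one already exhibited in \Cref{rem.Murphy}, so the conclusion recovers Murphy's theorem in that range.
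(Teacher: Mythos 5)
Your proposal is correct and follows essentially the same route as the paper: embed $\End_{\k\mathfrak{S}_{r}}(\Sp(\lambda))$ into $\Rel_{\k\mathfrak{S}_{r}}(M(\alpha),M(\beta))$, invoke \Cref{prop.sruct.3} to confine the support of a relevant homomorphism to $\mathcal{TR}_{m-1}\cap\mathcal{TC}_{m-1}$, and observe that the row- and column-sum constraints force this set to be the single staircase matrix. Your peeling induction for the singleton check and your remark on the $m=2$ degenerate case merely spell out details the paper leaves implicit.
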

\begin{proof}
  Let $\bar{h}$ be a non-zero endomorphism of $\Sp(\lambda)$, which we identify with a relevant homomorphism $h\in\Hom_{\k\mathfrak{S}_{r}}(M(\alpha),M(\beta))$ as in \Cref{rem.ide.end}. If $A\in\mathcal{T}$ with $h[A]\neq 0$, then $A\in\mathcal{TR}_{m-1}\cap\mathcal{TC}_{m-1}$ by \Cref{prop.sruct.3}. But since $\sum_{v}a_{\tau(i)v}=i$, $\sum_{u}a_{u\tau(j)}=j$ for $1<i,j<m$, this set consists solely of the matrix:
    \[
      A_{0}\coloneqq
        \begin{tabular}{|c|c|c|c|c|c|c|}
          \hline
            $1$ & $1$ & $1$ & $\dots$ & $1$ & $1$ & $b$ \\
          \hline
            $1$ & $1$ & $1$ & $\dots$ & $1$ & $1$ & $0$ \\
          \hline
            $1$ & $1$ & $1$ & $\dots$ & $1$ & $0$ & $0$ \\
          \hline
            $\vdots$ & $\vdots$ & $\vdots$ & $\ddots$ & $\vdots$ & $\vdots$ & $\vdots$ \\
          \hline
            $1$ & $1$ & $1$ & $\dots$ & $0$ & $0$ & $0$ \\
          \hline
            $1$ & $1$ & $0$ & $\dots$ & $0$ & $0$ & $0$ \\
          \hline
            $a-m+1$ & $0$ & $0$ & $\dots$ & $0$ & $0$ & $0$ \\
          \hline
        \end{tabular}
      \ .
    \]
  Therefore, we have $h=h[A_{0}]\rho[A_{0}]$, and so we are done.
\end{proof}

\end{document}